\newcommand{\xqedhere}[2]{%
  \rlap{\hbox to#1{\hfil\llap{\ensuremath{#2}}}}}
\newcommand{\xqed}[1]{%
  \leavevmode\unskip\penalty9999 \hbox{}\nobreak\hfill
  \quad\hbox{\ensuremath{#1}}}
\let\OLDthebibliography\thebibliography
\renewcommand\thebibliography[1]{
  \OLDthebibliography{#1}
  \small
  \setlength{\parskip}{0pt}
  \setlength{\itemsep}{1.5pt plus 0.3ex}
}
 \theoremstyle{plain}
 \newtheorem{thm}{Theorem}[section]
  \theoremstyle{definition}
  \newtheorem{defn}[thm]{Definition}
  \theoremstyle{definition}
  \newtheorem{example}[thm]{Example}
  \theoremstyle{plain}
  \newtheorem{lem}[thm]{Lemma}
  \theoremstyle{plain}
  \newtheorem{cor}[thm]{Corollary}
  \theoremstyle{remark}
  \newtheorem{rem}[thm]{Remark}
  \theoremstyle{plain}
  \newtheorem{prop}[thm]{Proposition}
  \theoremstyle{plain}
  \theoremstyle{remark}
  \theoremstyle{remark}
  \theoremstyle{plain}
  \theoremstyle{definition}
  \newtheorem{constr}[thm]{Construction}
  \numberwithin{equation}{section}
\DeclareMathOperator{\Hom}{Hom}
\DeclareMathOperator{\Spec}{Spec}
\DeclareMathOperator{\Trop}{Trop}
\DeclareMathOperator{\trop}{trop}
\DeclareMathOperator{\Star}{S}
\DeclareMathOperator{\relint}{relint}
\DeclareMathOperator{\ord}{ord}
\DeclareMathOperator{\divv}{div}
\DeclareMathOperator{\Div}{Div}
\DeclareMathOperator{\Pic}{Pic}
\DeclareMathOperator{\Lin}{Span}
\DeclareMathOperator{\id}{id}
\DeclareMathOperator{\mult}{mult}
\DeclareMathOperator{\dist}{dist}
\DeclareMathOperator{\pr}{pr}
\DeclareMathOperator{\an}{an}
\DeclareMathOperator{\Mink}{M}
\DeclareMathOperator{\charak}{char}
\DeclareMathOperator{\Orb}{O}
\DeclareMathOperator{\cOrb}{V}
\DeclareMathOperator{\ClTLT}{ClCP}
\DeclareMathOperator{\TLT}{CP}
\newcommand{\Bound}{\Div_{X_0}(X)}
\newcommand{\R}{{\mathds R}}
\newcommand{\G}{{\mathds G}}
\newcommand{\Q}{{\mathds Q}}
\newcommand{\Z}{{\mathds Z}}
\newcommand{\N}{{\mathds N}}
\newcommand{\Pj}{{\mathds P}}
\newcommand{\gz}{_{\geq 0}}
\newcommand{\lz}{_{\leq 0}}
\newcommand{\ind}[1]{\operatorname{index}\left(#1\right)}
\newcommand{\varprod}[2]{#1\cup #2}
\newcommand{\tlt}{cp}
\newcommand{\tR}{\Pi}
\newcommand{\ctR}{\overline\Pi\vphantom{\Pi}}
\newcommand{\idR}{\psi_{\Pi^1}}
\renewcommand{\phi}{\varphi}
\newcommand{\Lambvee}{\Lambda\mkern-4mu^\vee}
\newcommand{\Mnulln}[1][n]{M_{0,#1}}
\newcommand{\Mnullnbar}[1][n]{\overline M_{0,#1}}
\newcommand{\Mnullnlab}{\operatorname{LSM}^\circ}
\newcommand{\Mnullnbarlab}{\operatorname{LSM}}
\newcommand{\Mnullntrop}[1][n]{M^{\trop}_{0,#1}}
\newcommand{\Mnullnbartrop}[1][n]{\overline M^{\trop}_{0,#1}}
\newcommand{\Mnullnlabtrop}{M_{0,n}^{\operatorname {lab}}}
\newcommand{\Mnullnbarlabtrop}{\overline M_{0,n}^{\operatorname{lab}}}
\newcommand{\ev}{\operatorname{ev}}
\author{\normalsize Andreas Gross}
\title{\large Intersection Theory on Tropicalizations of Toroidal Embeddings}
\date{}
\begin{document}

\maketitle
\vspace{-1.2cm}

\begin{abstract}
We show how to equip the cone complexes of toroidal embeddings with additional structure that allows to define a balancing condition for weighted subcomplexes. We then proceed to develop the foundations of an intersection theory on cone complexes including push-forwards, intersections with tropical divisors, and rational equivalence. These constructions are shown to have an algebraic interpretation: Ulirsch's tropicalizations of subvarieties of toroidal embeddings carry natural multiplicities making them tropical cycles, and the induced tropicalization map for cycles respects push-forwards, intersections with boundary divisors, and rational equivalence. As an application we prove a correspondence between the genus $0$ tropical descendant Gromov-Witten invariants introduced by Markwig and Rau and the genus $0$ logarithmic descendant Gromov-Witten invariants of toric varieties.
\end{abstract}

\section{Introduction}

The process of tropicalizing subvarieties of algebraic tori over a field with trivial valuation is at the heart of tropical geometry. Accordingly, a lot of work has been done to understand the structure of tropicalizations and the ways in which they reflect algebraic properties and constructions. The main idea of tropical geometry, namely that of transforming algebraic geometry into discrete geometry, is based on the result that the tropicalization of a subvariety $Z\subseteq \G_m^n$ is the support of a purely $\dim(Z)$-dimensional rational polyhedral fan in $\R^n$ \cite{BG84}. It can be enriched with the structure of what is known as a tropical cycle by defining multiplicities on its maximal cones \cite{Speyer-diss,ST08}. The tropicalization tells us which torus orbits are met by the closure of $Z$ in a toric compactification of $\G_m^n$ \cite{Tev07}, and, in case these intersections are proper and the compactification is smooth, the intersection multiplicities \cite{K09}. This relation between intersection theory on toric varieties and tropical geometry has been extended by the development of tropical intersection theory on $\R^n$ \cite{AR10} which incorporates the intersection rings of all normal toric compactifications of $\G_m^n$ \cite{FS97,Rau08,Katz12}. Being able to intersect tropically as well as algebraically, it makes sense to investigate in how far tropicalization respects intersections \cite{OP13,OR11} or other intersection theoretic constructions, as for example push-forwards \cite{ST08} (\cite{BPR11,OP13} in the case of non-trivial valuations).

When we want to apply tropical intersection theory to describe intersections on a non-toric variety $X$, we need to embed it into a toric variety. We then have to determine what it means for a cocycle on $X$ to be represented by tropical data on the tropicalization of the part $X_0$ of $X$ which is mapped to the big open torus. A widely accepted notion of representation has been introduced by Eric Katz \cite{Katz12}, who applied it  for $X$ equal to the moduli space of $n$-marked rational stable curves and $X_0$ the open subset of irreducible curves to relate algebraic and tropical $\psi$-classes. In \cite{CMR14} it has been used in the case where $X$ is the moduli space of relative stable maps to $\Pj^1$ and $X_0$ is the open subset corresponding to maps with irreducible domain to show that the tropical and algebraic genus $0$ relative descendant Gromov-Witten invariants of $\Pj^1$ coincide. Another approach to obtain such correspondence theorems, considered by the author in \cite{G14}, is to ignore the boundary of $X$ and consider tropicalizations of subvarieties of $X_0$ of the form $f_1^{-1} V_1 \cap\dots \cap f_n^{-1} V_n$, where the $f_i$ are maps to algebraic tori which can be extended to morphisms of algebraic tori, and the $V_i$ are generic in an appropriate sense. The main difficulty in both approaches is to find appropriate embeddings into toric varieties of the objects under consideration, and check that all maps between them can be extended to toric morphisms. On the other hand, if $X_0\subseteq X$ is a toroidal embedding, it automatically comes with a combinatorial object, namely its associated cone complex $\Sigma(X)$ \cite{KKMSD73}, and subvarieties of $X_0$ can be tropicalized to subsets of $\Sigma(X)$ in a functorial way, that is respecting toroidal morphisms \cite{Uli13}. The objective of this paper is to develop the foundations of an intersection theory on cone complexes which is compatible with tropicalization, hence making the intersection theory on $X$ accessible to tropical methods without having to find an embedding into a toric variety.

This goal breaks down into two separate but related tasks. The first one is to develop an intersection theory on  what we will call weakly embedded extended cone complexes. Our constructions and results which only need the finite part of these compactified cone complexes extend the theory of Allermann and Rau. Constructions involving compactifications of cone complexes have appeared before in the work of Shaw \cite{ShawDiss}  and Meyer \cite{HenningDiss}. In Shaw's work, she mainly considers locally matroidal tropical manifolds, which differs from our setup in that we do not consider local charts, but instead a global projection of the complex. The work of Meyer treats Kajiwara's and Payne's tropical toric varieties. These are the prototypical examples of weakly embedded extended cone complexes, yet his definitions and constructions vary significantly from ours.

The second task is to define tropicalizations of cycles on toroidal embeddings and investigate the relation between algebraic and tropical intersection theory. The underlying set of a tropicalization has already been defined in analogy to the toric case: it is the image of a suitable subset of the analytification under the tropicalization map. The precursor of the tropicalization map for a toroidal embedding $X$ is the ``$\ord$''-map $X_0\big(k\big(\!(t)\!\big)\!\big)\cap X\big(k[\mspace{-2mu}[t]\mspace{-2mu}]\big)\rightarrow\Sigma(X)$ which already appeared in \cite{KKMSD73}. This map has been greatly extended by Thuillier \cite{Thu07} who constructed a retraction of the analytification $X^\beth$ onto its skeleton, a polyhedral set which has a natural identification with the extended cone complex $\overline\Sigma(X)$. An even more general framework has been considered by Ulirsch \cite{Uli13} who constructed functorial maps $\trop_X\colon X^\beth\rightarrow \overline\Sigma(X)$ for fine and saturated logarithmic schemes $X$. It was also Ulirsch who suggested to use these maps to define set-theoretical tropicalizations and showed that in the toroidal case they have similar properties as in the toric case \cite{Uli15}. What we do in the present paper is to define multiplicities on these tropicalizations and to investigate in how far the resulting tropicalization map for cycles preserves information about the intersection theory of $X$.

The outline of the paper is as follows. In Section \ref{CCsec} we briefly recall basic facts about cone complexes and their extensions. We will introduce the central tropical objects of our theory, weakly embedded cone complexes. These are cone complexes together with a continuous piecewise integral linear map into a vector space with integral structure. We will also show how to naturally assign a weakly embedded cone complex to a strict toroidal embedding and how the boundary of the toroidal embedding is related to the boundary of the associated weakly embedded extended complex. In section \ref{ITsec} we begin the development of an intersection theory on weakly embedded extended cone complexes. We introduce Minkowski weights and tropical cycles, the analogues of cocycles and cycles on toroidal embeddings. These can be intersected with tropical Cartier divisors in two ways, one being analogous to the cup-product of a cocycle with the first Chern class of a boundary divisor on a toroidal embedding, the other to the proper intersection of a cycle with a boundary divisor. Furthermore, we define a push-forward of tropical cycles for morphisms of weakly embedded extended cone complexes. We also give a definition of rational equivalence for tropical cycles and show that intersections with divisors and push-forwards pass to the tropical Chow groups. In Section \ref{RATsec} we relate the intersection theory of a toroidal embedding $X$ to that of its weakly embedded extended cone complex $\overline\Sigma(X)$. We start by tropicalizing cocycles on $X$. The results will be Minkowski weights on $\Sigma(X)$, that is weights on the cones of $\Sigma(X)$ of appropriate dimension that satisfy the so-called balancing condition. Afterwards, we will use this construction to tropicalize cycles. We then prove that tropicalization respects push-forwards, intersections with boundary divisors, and rational equivalence. Finally, in Section \ref{Asec} we exemplify how our methods can be applied to enumerative problems in algebraic and tropical geometry. Our tropicalization procedure yields intrinsic tropicalizations of $\psi$-classes on the moduli spaces of marked rational stable curves which we show to coincide with the tropical $\psi$-classes of Mikhalkin \cite{Mik07} and Kerber and Markwig \cite{MK09}. Building on results of Ranganathan \cite{Ranga15} this leads to a correspondence between the genus $0$ tropical descendant Gromov-Witten invariants of $\R^n$ of Markwig and Rau \cite{MR08,Rau08} and the genus $0$ logarithmic descendant Gromov-Witten invariants of toric varieties. 

\paragraph{Acknowledgments} I would like to thank Andreas Gathmann for many helpful discussions and especially for numerous comments and corrections on earlier drafts. I have also benefited from several illuminating conversations with Martin Ulirsch.

\section{Cone Complexes and Toroidal Embeddings}
\label{CCsec}

The purpose of this section is to briefly recall the definitions of cone complexes and extended cone complexes. We refer to \cite[Section II.1]{KKMSD73} and \cite[Section 2]{ACP14} for further details. We will also introduce weakly embedded cone complexes, the main objects of study in the next section, and show how to obtain them from toroidal embeddings. 	

\subsection{Cone Complexes}
An \emph{(integral) cone} is a pair $(\sigma, M)$ consisting of a topological space $\sigma$ and a lattice $M$ of real-valued continuous functions on $\sigma$ such that the product map $\sigma\rightarrow\Hom(M,\mathds R)$ is a homeomorphism onto a strongly convex rational polyhedral cone. By abuse of notation we usually just write $\sigma$ for the cone $(\sigma, M)$ and refer to $M$ as $M^\sigma$. Furthermore, we will write $N^\sigma=\Hom(M,\Z)$ so that $\sigma$ is identified with a rational cone in $N^\sigma_\mathds R= N^\sigma\otimes_{\mathds Z}\mathds R$. We will also use the notation $M^\sigma_+$ for the semigroup of non-negative functions in $M$, and $N_+^\sigma$ for $\sigma\cap N^\sigma$. Giving an integral cone is equivalent to specifying a lattice $N$ and a full-dimensional strongly convex rational polyhedral cone in $N_{\mathds R}$. A subspace $\tau\subseteq\sigma$ is called a \emph{subcone} of $\sigma$ if the restrictions of the functions in $M^\sigma$ to $\tau$ make it a cone. Identifying $\sigma$ with its image in $N^\sigma_\R$, this is the case if and only if $\tau$ itself is a rational polyhedral cone in $N^\sigma_\R$.
A \emph{morphism} between two integral cones $\sigma$ and $\tau$ is a continuous map $f\colon\sigma\rightarrow\tau$ such that $m\circ f\in M^\sigma$ for all $m\in M^\tau$. The product $\sigma\times\tau$ of two cones $\sigma$ and $\tau$ in the category of cones is given by the topological space $\sigma\times\tau$, together with the functions
\begin{equation*}
m+m'\colon\sigma\times\tau\rightarrow\R,\;\; (x,y)\mapsto m(x)+m'(y)
\end{equation*} 
for $m\in M^\sigma$ and $m'\in M^\tau$. The notation already suggests that we can identify $M^\sigma\oplus M^\tau$ with $M^{\sigma\times\tau}$ via the isomorphism $(m,m')\mapsto m+m'$.
 
An \emph{(integral) cone complex} $(\Sigma,|\Sigma|)$ consists of a topological space $|\Sigma|$ and a finite set $\Sigma$ of integral cones which are closed subspaces of $|\Sigma|$, such that their union is $|\Sigma|$, every face of a cone in $\Sigma$ is in $\Sigma$ again, and the intersection of two cones in $\Sigma$ is a union of common faces. A subset $\tau\subseteq|\Sigma|$ is called a \emph{subcone} of $\Sigma$ if there exists some $\sigma\in\Sigma$ such that $\tau$ is a subcone of $\sigma$.
A \emph{morphism} $f\colon\Sigma\rightarrow\Delta$ of cone complexes is a continuous map $|\Sigma|\rightarrow\ |\Delta|$ such that for every $\sigma\in\Sigma$ there exists $\delta\in\Delta$ such that $f(\sigma)\subseteq\delta$, and the restriction $f|_\sigma\colon\sigma\rightarrow\delta$ is a morphism of cones. 

A \emph{subdivision} of a cone complex $\Sigma$ is a cone complex $\Sigma'$ such that $|\Sigma'|$ is a subspace of $|\Sigma|$ and every cone of $\Sigma'$ is a subcone of $\Sigma$. The subdivision is called \emph{proper} if $|\Sigma'|=|\Sigma|$.

The reason why the tropical intersection theory developed in \cite{AR10} fails to generalize to cone complexes is that when gluing two cones $\sigma$ and $\sigma'$ along a common face, there is no natural lattice containing both $N^\sigma$ and $N^{\sigma'}$, hence making it impossible to speak about balancing. To remedy this we make the following definition that interpolates between fans and cone complexes:

\begin{defn}
A \emph{weakly embedded cone complex} is a cone complex $\Sigma$, together with a lattice $N^\Sigma$, and a continuous map $\phi_\Sigma\colon|\Sigma|\rightarrow N^\Sigma_\R$ which is integral linear on every cone of $\Sigma$. 
\end{defn}

We recover the notion of fans by imposing the additional requirements that $\phi_\Sigma$ is injective, and the lattices spanned by $\phi_\Sigma(N_+^\sigma)$ for $\sigma\in\Sigma$ are saturated in $N^\Sigma$. In this case we also call $\Sigma$ an embedded cone complex. On the other hand, the notion of cone complexes is recovered by setting $N^\Sigma=0$.

Given a weakly embedded cone complex $\Sigma$ we write $M^\Sigma=\Hom(N^\Sigma,\Z)$ for the dual of $N^\Sigma$, and for $\sigma\in\Sigma$ we write $N^\Sigma_\sigma$ for what is usually denoted by $N^\Sigma_{\phi_\Sigma(\sigma)}$ in toric geometry, that is $N^\Sigma_\sigma= N^\Sigma\cap \Lin{\phi_\Sigma(\sigma)}$. Furthermore, we will frequently abuse notation and write $\phi_\Sigma$ for the induced morphisms $N^\sigma\rightarrow N^\Sigma$ and $N^\sigma_\R\rightarrow N^\Sigma_\R$. 

A \emph{morphism} between two weakly embedded cone complexes $\Sigma$ and $\Delta$ is comprised of a morphism $\Sigma\rightarrow \Delta$ of cone complexes and a morphism $N^\Sigma\rightarrow N^\Delta$ of lattices forming a commutative square with the weak embeddings.

Let $\Sigma$ be a cone complex, and let $\tau\in\Sigma$. For every $\sigma\in\Sigma$ containing $\tau$ let $\sigma/\tau$ be the image of $\sigma$ in $N^\sigma_\R/N^\tau_\R$. Whenever $\sigma$ and  $\sigma'$ are two cones containing $\tau$ such that $\sigma'$ is a face of $\sigma$, the cone $\sigma'/\tau$ is a naturally identified with a face of $\sigma/\tau$. Gluing the cones $\sigma/\tau$ for $\tau\preceq\sigma\in\Sigma$ along these face maps produces a new cone complex, the \emph{star $\Star_\Sigma(\tau)$} (or just $\Star(\tau)$ if $\Sigma$ is clear) of $\Sigma$ at $\tau$. If $\Sigma$ comes with a weak embedding, then the star is naturally a weakly embedded complex again. Namely, for every cone $\sigma$ of $\Sigma$ containing $\tau$ there is an induced integral linear map $\sigma/\tau\rightarrow (N^\Sigma/N^\Sigma_\tau)_\R\eqqcolon N^{\Star(\tau)}_\R$, and these maps glue to give a continuous map $\phi_{\Star(\tau)}\colon|\Star(\tau)|\rightarrow N^{\Star(\tau)}_\R$.

The product $\Sigma\times\Delta$ of two cone complexes $\Sigma$ and $\Delta$ in the category of cone complexes is the cone complex with underlying space $|\Sigma|\times|\Delta|$ and cones $\sigma\times\delta$ for $\sigma\in\Sigma$ and $\delta\in\Delta$. If both $\Sigma$ and $\Delta$ are weakly embedded, then their product in the category of weakly embedded cone complexes is the product of the cone complexes together with the weak embedding $\phi_{\Sigma\times\Delta}=\phi_\Sigma\times\phi_\Delta$.

\subsection{Extended Cone Complexes}

In the image of an integral cone $\sigma$ under its canonical embedding into $\Hom(M^\sigma,\mathds R)$  are exactly those morphisms $M^\sigma\rightarrow \R$ that are non-negative on $M^\sigma_+$. Therefore, $\sigma$ is canonically identified with the set $\Hom(M^\sigma_+,\R\gz)$ of morphisms of monoids. This identification motivates the definition of the \emph{extended cone} $\overline\sigma=\Hom(M^\sigma_+,\overline\R\gz)$ of $\sigma$, where $\overline\R\gz=\R\gz\cup\{\infty\}$, and the topology on $\overline\sigma$ is that of pointwise convergence. The cone $\sigma$ is an open dense subset of $\overline\sigma$, and $\overline\sigma$ is compact by Tychonoff's theorem. If $v\in\overline\sigma$ is an element of this compactification, the sets $\{m\in M^\sigma_+\mid \langle m,v\rangle\in\R\gz\}$ and $\{m\in M^\sigma_+\mid \langle m, v\rangle=0\}$ generate two faces of $\sigma^\vee=\R\gz M^\sigma_+\subseteq M_\R^\sigma$, and these are dual to two comparable faces of $\sigma$. In this way we obtain a stratification of $\overline\sigma$, the stratum corresponding to a pair $\tau\supseteq\tau'$ of faces of $\sigma$ being
\begin{equation*}
F^\circ_\sigma(\tau,\tau')=
\left\{v\in\overline\sigma\left\lvert
\begin{aligned}
\langle m,v\rangle\in\R\gz & \Leftrightarrow m\in(\tau')^\perp\cap M^\sigma_+,\\
\langle m,v\rangle=0	&\Leftrightarrow m\in\tau^\perp\cap M^\sigma_+
\end{aligned} 
\right.\right\}.
\end{equation*}
Denote by $F_\sigma(\tau,\tau')$ the subset of $\overline\sigma$ obtained by relaxing the second condition in the definition of $F_\sigma^\circ(\tau,\tau')$ and allowing the vanishing locus of $v$ to be possibly larger than $\tau^\perp\cap M^\sigma_+$. Since $(\tau')^\perp\cap M^\sigma$ is canonically identified with the dual lattice of $N^\sigma/N^\sigma_{\tau'}$, we can identify $F_\sigma(\tau,\tau')$ with the image $\tau/\tau'$ of $\tau$ in $(N^\sigma/N^\sigma_{\tau'})_\R$. This gives $F_\sigma(\tau,\tau')$ the structure of an integral cone.

Every morphism $f\colon\tau\rightarrow\sigma$ of cones induces a morphism $\overline f\colon\overline\tau\rightarrow\overline\sigma$ of extended cones, and if $f$ identifies $\tau$ with a face of $\sigma$, this extended morphism maps $\overline\tau$ homeomorphically onto $\coprod_{\tau'\preceq\tau}F_\sigma(\tau,\tau')=\overline{F(\tau,0)}$. Every subset of $\overline\sigma$ occurring like this is called an extended face of $\overline\sigma$. 

Given a cone complex $\Sigma$ we can glue the extensions of its cones along their extended faces according to the inclusion relation on $\Sigma$ and obtain a compactification $\overline\Sigma$ of $\Sigma$, which is called the extended cone complex associated to $\Sigma$. For a cone $\tau\in\Sigma$, the cones $\sigma/\tau=F_\sigma(\sigma,\tau)$, where $\sigma\in\Sigma$ with $\tau\subseteq\sigma$, are glued in $\overline\Sigma$ exactly as in the construction of $\Star(\tau)$, and therefore there is an identification of $\Star(\tau)$ with a locally closed subset of $\overline\Sigma$ which extends naturally to an identification of the extended cone complex $\overline{\Star(\tau)}$ with a closed subset of $\overline\Sigma$.  With these identifications, we see that $\overline\Sigma$ is stratified by the stars of $\Sigma$ at its various cones.

For every morphism $f\colon\Sigma\rightarrow\Delta$ between two cone complexes, there is an induced map $\overline f\colon\overline\Sigma\rightarrow\overline\Delta$, and we call any map arising that way a \emph{dominant} morphism of extended cone complexes. If both $\Sigma$ and $\Delta$ are weakly embedded we require $f$ to respect these embeddings. Whenever $\sigma\in\Sigma$,  and $\delta\in\Delta$ is the minimal cone of $\Delta$ containing $f(\sigma)$, there is an induced morphism $\Star_f(\sigma)\colon\Star_\Sigma(\sigma)\rightarrow \Star_\Delta(\delta)$ of cone complexes. It is easily checked that this describes $\overline f$ on the stratum $\Star_\Sigma(\sigma)$, that is that the diagram

\begin{center}
\begin{tikzpicture}[auto]
\matrix[matrix of math nodes, row sep=5ex, column sep=4em,text height=1.5ex, text depth= .25ex]{
|(Ssig)| \Star_\Sigma(\sigma) & |(Ssigbar)| \overline{\Star_\Sigma(\sigma)} & |(Sigbar)|\overline\Sigma \\
|(Sdelt)| \Star_\Delta(\delta) & |(Sdeltbar)| \overline{\Star_\Delta(\delta)} & |(Deltbar)| \overline \Delta \\
};
\begin{scope}[->,, font=\footnotesize]
\draw (Ssig)--(Ssigbar);
\draw (Ssigbar)--(Sigbar);
\draw (Sdelt)--(Sdeltbar);
\draw (Sdeltbar)--(Deltbar);

\draw (Ssig)--node{$\Star_f(\sigma)$} (Sdelt);
\draw (Ssigbar)--node{$\overline{\Star_f(\sigma)}$} (Sdeltbar);
\draw (Sigbar)--node{$\overline f$}(Deltbar);
\end{scope}
\end{tikzpicture}
\end{center}
is commutative. If $\Sigma$ and $\Delta$ are weakly embedded, and $f$ respects the weak embeddings, then so does $S_f(\sigma)$.

In general, we define a \emph{morphism of extended cone complexes} between $\overline\Sigma$ and $\overline\Delta$ to be a map $\overline\Sigma\rightarrow\overline\Delta$ which factors through a dominant morphism to $\overline{\Star_\Delta(\delta)}$ for some $\delta\in\Delta$. If $\Sigma$ and $\Delta$ are weakly embedded, we additionally require the dominant morphism to respect the weak embeddings.

\subsection{Toroidal Embeddings}

Before we start with the development of an intersection theory on weakly embedded cone complexes, we want to point out how to obtain them from toroidal embeddings as this will be our primary source of motivation and intuition.
A \emph{toroidal embedding} is pair $(X_0,X)$ consisting of a normal variety $X$ and a dense open subset $X_0\subseteq X$ such that the open immersion $X_0\rightarrow X$ formally locally looks like the inclusion of an algebraic torus $T$ into a $T$-toric variety. More precisely, this means that for every closed point $x\in X$ there exists an affine toric variety $Z$, a closed point $z\in Z$, and an isomorphism $\widehat{\mathcal O}_{X,x} \cong \widehat{\mathcal O}_{Z,z}$ over the ground field $k$ which identifies the ideal of $X\setminus X_0$ with that of $Z\setminus Z_0$, where $Z_0$ denotes the open orbit of $Z$.
A toroidal embedding is called strict, if all components of $X\setminus X_0$ are normal.
In this paper, we will only consider strict toroidal embeddings and therefore omit the "strict".

Every toroidal embedding $X$ has a canonical stratification. If $E_1,\dots, E_n$ are the components of $X\setminus X_0$, then the strata are given by the connected components of the sets
\begin{equation*}
\bigcap_{i\in I} E_i \setminus \bigcup_{j\notin I} E_j,
\end{equation*}
where $I$ is a subset of $\{1,\dots,n\}$.

The combinatorial open subset $X(Y)$ of $X$ associated to a stratum $Y$ is the union of all strata of $X$ containing $Y$ in their closure. This defines an open subset of $X$, as it is constructible and closed under generalizations. 
Furthermore, the stratum $Y$ defines the following lattices, semigroups, and cones:
\begin{align*}
M^Y&=\{\text{Cartier divisors on }X(Y) \text{ supported on } X(Y)\setminus X_0\}\\
N^Y&=\Hom(M^Y,\Z)\\
M^Y_+&=\{\text{Effective Cartier divisors in }M^Y\}\\
N^Y_{\mathds R}\supseteq\sigma^Y &= (\R\gz M^Y_+)^\vee.
\end{align*}
If $X$ is unclear, we write $M^Y(X)$, $N^Y(X)$, and $\sigma^Y_X$.
Whenever a stratum $Y$ is contained in the closure of a stratum $Y'$
the morphism $N^{Y'}\rightarrow N^Y$ induced by the restriction of divisors maps injectively onto a saturated sublattice of $N^Y$ and identifies $\sigma^{Y'}$ with a face of $\sigma^Y$. Gluing along these identifications produces the cone complex $\Sigma(X)$. We refer to \cite[Section II.1]{KKMSD73} for details. In accordance with the toric case we will write $\Orb(\sigma)$ for the stratum of $X$ corresponding to a cone $\sigma\in\Sigma(X)$. Its closure will be denoted by $\cOrb(\sigma)$, and we will abbreviate $X(\Orb(\sigma))$ to $X(\sigma)$.

Define $M^X=\Gamma(X_0,\mathcal O_X^*)/k^*$, and let $N^X=\Hom(M^X,\mathds Z)$ be its dual. For every stratum $Y$ of $X$  we have a morphism
\begin{equation*}
	M^X\rightarrow M^Y,\;\; f\mapsto \divv(f)|_{X(Y)},
\end{equation*}
which induces an integral linear map $\sigma^Y\rightarrow N^X_\R$. Obviously, these maps glue to give a continuous function $\phi_X\colon|\Sigma(X)|\rightarrow N^X_\R$ which is integral linear on the cones of $\Sigma(X)$. In other words, we obtain a weakly embedded cone complex naturally associated to $X$, which we again denote by $\Sigma(X)$.

\begin{example}
\label{ITexample: Toric variety and P2 without two hyperplanes}
\leavevmode
\vspace{-1ex}
\begin{enumerate}[label=\alph*)]
\item Let $\Sigma$ be a fan in $N_\R$ for some lattice $N$, and let $X$ be the associated normal toric variety. Let $M=\Hom(N,\Z)$ be the dual of $N$ and $T=\Spec k[M]$ the associated algebraic torus. By definition, $T\subseteq X$ is a toroidal embedding. The components of the boundary $X\setminus T$ are the $T$-invariant divisors $D_\rho$ corresponding to the rays $\rho\in\Sigma_{(1)}$, and the strata of $X$ are the $T$-orbits $O(\sigma)$ corresponding to the cones $\sigma\in\Sigma$. The combinatorial open subsets of $X$ are precisely its $T$-invariant affine opens. For every $\tau\in\Sigma$ the isomorphism 
\begin{equation*}
M/(M\cap \tau^\perp)\rightarrow M^{O(\tau)},\;\; [m]\mapsto \divv(\chi^m),
\end{equation*}
where $\chi^m$ denotes the character associated to $m$, induces identifications of $N^{O(\tau)}$ with $N_\tau$ and $\sigma^{O(\tau)}$ with $\tau$. After identifying $M$ and $M^X$ via the isomorphism
\begin{equation*}
M\rightarrow M^X=\Gamma(T,\mathcal O_X^*)/k^*,\;\; m\mapsto \chi^m
\end{equation*}
we see that the image of $\sigma^{O(\tau)}$ in $N_\R$ under $\phi_X$ is precisely $\tau$. We conclude that $\Sigma(X)$ is an embedded cone complex, naturally isomorphic to $\Sigma$.

\item For a non-toric example consider $X=\Pj^2$ with open part $X_0= X\setminus H_1\cup H_2$, where $H_i=\{x_i=0\}$. Since $X$ is smooth, $\Sigma(X)$ is naturally identified with the orthant $(\R\gz)^2$, whose rays $\R\gz e_1$ and $\R\gz e_2$ correspond to the divisors $H_1$ and $H_2$. The lattice $M^X$ is generated by $\frac{x_1}{x_2}$, and using that generator to identify $M^X$ with $\Z$ the weak embedding $\phi_X$ sends $e_1$ to $1$ and $e_2$ to $-1$, as depicted in Figure \ref{ITfig:Weakly embedded cone complex}.
\end{enumerate}
\end{example}

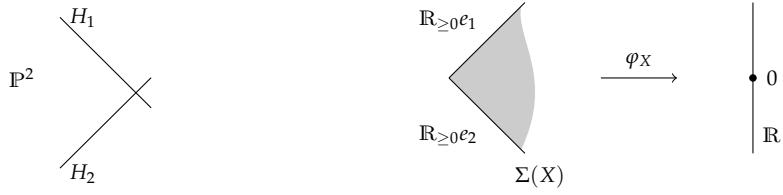
\begin{figure}

\begin{minipage}{.32\textwidth}
\centering
\begin{tikzpicture}[ font=\footnotesize, auto, near end]
\draw (.2,-.2)--node[above,near end, inner sep=1ex]{$H_1$}(-1,1);
\draw (.2,.2)--node[below,inner sep=1.5ex]{$H_2$}(-1,-1);
\node at (-1.5,0.2) {$\Pj^2$};
\end{tikzpicture}
\end{minipage}
\hfill
\begin{minipage}{.63\textwidth}
\centering
\begin{tikzpicture}[auto, font=\footnotesize]
\fill[fill=black!20] (0,0) -- ($.95*(1,1)$)	.. controls (0.87,0.53) and (1.4,0) .. ($.95*(1,-1)$) -- cycle;
\draw (0,0) -- node{$\R\gz e_1$} (1,1);
\draw (0,0) -- node[swap]{$\R\gz e_2$} (1,-1);
\begin{scope}[xshift=0cm]
\draw[->] (2,0) --node{$\phi_X$} (3,0);
\draw (4,-1) --node[very near start,swap]{$\R$} (4,1);
\node at (4,0) [shape=circle,fill=black, label=east:$0$,inner sep =1pt]{} ;
\node at (1.2,-1.3) {$\Sigma(X)$};
\end{scope}
\end{tikzpicture}
\end{minipage}
\caption{The toroidal embedding $\Pj^2\setminus (H_1\cup H_2)\subseteq \Pj^2$ and its weakly embedded cone complex}
\label{ITfig:Weakly embedded cone complex}
\end{figure}

It follows directly from the definitions that whenever $Y$ is a stratum of a toroidal embedding $X$, the embedding $Y\subseteq \overline Y$ is toroidal again. In the following lemma we compare the weakly embedded cone complex of $\overline Y$ with the star of $\Sigma(X)$ at $\sigma^Y$.

\begin{lem}
\label{ITlem: complex of stratum}
Let $X_0\subseteq X$ be a toroidal embedding, and let $Y$ and $Y'$ be two strata of $X$ such that $Y'\subseteq\overline Y$. Then the sublattice $M^{Y'}(X)\cap (\sigma_X^Y)^\perp$ consists precisely of the divisors in $M^{Y'}(X)$ whose support does not contain $Y$. The natural restriction map 
\begin{equation*}
M^{Y'}(X)\cap (\sigma_X^Y)^\perp \rightarrow M^{Y'}(\overline Y)
\end{equation*}
is an isomorphism identifying $M_+^{Y'}(X)\cap (\sigma_X^Y)^\perp$ with $M_+^{Y'}(\overline Y)$. Therefore, it induces an isomorphism $\sigma_{\overline Y}^{Y'}\xrightarrow{\simeq}\sigma_X^{Y'}/\sigma_X^Y$ of cones. These isomorphisms glue and give an identification of $\Sigma(\overline Y)$ with $\Star_{\Sigma(X)}(\sigma_X^Y)$. Furthermore, there is a natural morphism $N^{\Sigma(\overline Y)}\rightarrow N^{\Star_{\Sigma(X)}(\sigma_X^Y)}$ respecting the weak embeddings.
\end{lem}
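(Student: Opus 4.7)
The plan is to establish the orthogonality characterization directly, then reduce the asserted isomorphism to the standard toric computation by passing to a formal local model, and finally assemble the cone isomorphisms and produce the comparison of weak embeddings by dualizing the natural restriction of functions $M^X \to M^{\overline Y}$.

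I would begin with the orthogonality. Since $Y' \subseteq \overline Y$, the cone $\sigma_X^Y$ is a face of $\sigma_X^{Y'}$, and its rays correspond bijectively to those boundary components $E_j$ of $X$ that contain $Y$ (equivalently, contain $Y'$). A divisor $D \in M^{Y'}(X)$ has a unique expression $D = \sum a_i E_i$, where the sum runs over the components $E_i$ containing $Y'$. Under the duality between $M^{Y'}$ and $N^{Y'}$, the pairing of $D$ with the primitive generator of the ray of $\sigma_X^Y$ corresponding to $E_j$ equals $a_j$. Hence $D \in (\sigma_X^Y)^\perp$ iff $a_j = 0$ for every $E_j$ containing $Y$, i.e., iff $\Supp(D)$ does not contain $Y$.

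For the isomorphism with $M^{Y'}(\overline Y)$, I would work in the formal neighborhood of a closed point of $Y'$, in which the toroidal pair $(X_0 \subseteq X)$ is isomorphic to an affine toric variety $Z$ for the cone $\sigma_X^{Y'}$ at its torus-fixed point. Under this identification, $Y$ maps to the torus orbit of the face $\sigma_X^Y$ and $\overline Y$ to its closure, which is itself the affine toric variety associated to $\sigma_X^{Y'}/\sigma_X^Y$ with character lattice $M^{Y'} \cap (\sigma_X^Y)^\perp$. In this toric picture, the restriction map becomes the identity $M^{Y'} \cap (\sigma_X^Y)^\perp \to M^{Y'} \cap (\sigma_X^Y)^\perp$, manifestly identifying effective divisors. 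Strictness of the toroidal embedding ensures these local identifications are canonical and patch to a global isomorphism $M^{Y'}(X) \cap (\sigma_X^Y)^\perp \xrightarrow{\sim} M^{Y'}(\overline Y)$ of lattice-semigroup pairs. Dualizing yields $\sigma_{\overline Y}^{Y'} \simeq \sigma_X^{Y'}/\sigma_X^Y$, and compatibility with the face maps induced by inclusions $Y'' \subseteq \overline{Y'}$ among strata of $\overline Y$ glues these to the claimed identification $\Sigma(\overline Y) \simeq \Star_{\Sigma(X)}(\sigma_X^Y)$.

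Finally, for the weak embedding: an element $f \in M^X = \Gamma(X_0, \mathcal O_X^*)/k^*$ satisfies $\langle f, \phi_X(n)\rangle = 0$ for all $n \in \sigma_X^Y$ iff $\ord_{E_i}(f) = 0$ for every boundary component $E_i$ containing $Y$, iff $f$ is a unit in a Zariski neighborhood of $Y$ in $X$, in which case $f|_Y \in \Gamma(Y, \mathcal O_Y^*)/k^* = M^{\overline Y}$. This defines a restriction map from the subgroup of $M^X$ orthogonal to $\phi_X(\sigma_X^Y)$ into $M^{\overline Y}$; dualizing produces the required morphism $N^{\Sigma(\overline Y)} \to N^X/N^X_{\sigma_X^Y} = N^{\Star_{\Sigma(X)}(\sigma_X^Y)}$. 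Compatibility with the weak embeddings on each cone $\sigma_{\overline Y}^{Y''}$ follows from the commutativity of divisor restriction with function restriction to $\overline Y$: the defining morphism $M^{\overline Y} \to M^{Y''}(\overline Y)$ factors the composition $M^X \cap (\phi_X(\sigma_X^Y))^\perp \to M^{Y''}(X) \cap (\sigma_X^Y)^\perp \to M^{Y''}(\overline Y)$.

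The main technical point I expect to need care is the transfer from the formal local toric picture to the global toroidal one: strictness is precisely what rules out monodromy of the local models along $Y'$, so that the toric identifications at different points of $Y'$ agree and really do assemble into a single globally defined isomorphism of Cartier divisor groups. Once this globalization is checked, every remaining step is a routine dualization or a compatibility of restrictions.
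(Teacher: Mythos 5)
Your proposal is correct and takes essentially the same route as the paper: the orthogonality statement via pairing with the primitive ray generators, reduction of the divisor-lattice isomorphism to the affine toric local model at a closed point of $Y'$ (where both sides are identified with $M\cap\tau^\perp$), gluing via compatibility of the various restriction maps, and the weak-embedding comparison by restricting functions invertible on $X(Y)$ and using $\divv(f|_{\overline Y})=\divv(f)|_{\overline Y}$. The globalization issue you flag at the end is exactly what the paper resolves by invoking Lemma 1 of Chapter II, \S 1 of Kempf--Knudsen--Mumford--Saint-Donat, so it is a point settled by citation rather than a genuine gap.
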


\begin{proof}
Let $D_1,\dots,D_s$ be the boundary divisors containing $Y'$, and assume that they are labeled such that $D_1,\dots, D_r$ are the ones containing $Y$. Let $u_i$ be the primitive generator of the ray $\sigma^{D_i}$. For a Cartier divisor $D=\sum_i a_i D_i\in M^{Y'}(X)$ we have $\langle D,u_i\rangle=a_i$ by \cite[p. 63]{KKMSD73}. It follows that $M^{Y'}(X)\cap (\sigma^Y)^\perp$ contains exactly those divisors of $M^{Y'}(X)$ with $a_1=\dots= a_r=0$, which are precisely those whose support does not contain $Y$. These divisors can be restricted to the combinatorial open subset $\overline Y(Y')$ of $\overline Y$, yielding divisors in $M^{Y'}(\overline Y)$. To show that the restriction map is an isomorphism we may reduce to the toric case by choosing a local toric model at a closed point of $Y$ and using \cite[II, §1,  Lemma 1]{KKMSD73}. So assume $X=U_\sigma$ is an  affine toric variety defined by a cone $\sigma\subset N_\R$ in a lattice $N$, the stratum $Y'=O(\sigma)$ is the closed orbit, and $Y=O(\tau)$ for a face $\tau\prec\sigma$. Further reducing to the case in which $X$ has no torus factors we may assume that $\sigma$ is full-dimensional. Let $M$ be the dual of $N$. The isomorphism 
\begin{equation*}
M\rightarrow M^{Y'}(X),\;\; m\mapsto \divv(\chi^m),
\end{equation*}
where $\chi^m$ is the character associated to $m$, induces a commutative diagram
\begin{center}
\begin{tikzpicture}
\matrix[matrix of math nodes, row sep=4ex,text height=1.5ex, text depth= .25ex]{
|(M)| M 		&[1em]	|(Mtau)| M\cap \tau^\perp &[3em] \\
|(MY')| M^{Y'}(X)	&	|(MYsig)| M^{Y'}(X)\cap (\sigma^Y)^\perp 	&	|(MY'Y)| M^{Y'}(\overline Y) \\
};
\begin{scope}[->, font=\footnotesize]
\path (M) --node{$\supseteq$} (Mtau);
\draw (Mtau.east) -- (MY'Y);
\path (MY') --node{$\supseteq$} (MYsig);
\draw (MYsig) -- (MY'Y);

\draw (M) --node[left]{$\cong$} (MY');
\draw (Mtau) --node[left]{$\cong$} (MYsig);

\end{scope}	 
\end{tikzpicture}
\end{center}
It is well known that the character lattice of the open torus embedded in the toric variety $\overline Y$ can be naturally identified with $M\cap\tau^\perp$. With this identification, the upper right morphism in the diagram sends a character in $M\cap\tau^\perp$ to its associated principal divisor. Thus, it is an isomorphism which implies that the restriction map $M^{Y'}(X)\cap(\sigma^Y)^\perp\rightarrow M^{Y'}(\overline Y)$ is an isomorphism as well. Both $M_+^{Y'}(X)\cap (\sigma_X^Y)^\perp$ and $M_+^{Y'}(\overline Y)$ correspond to $M\cap\tau^\perp\cap\sigma^\vee$, hence they get identified by the restriction map. Dualization induces an isomorphism of the dual cone $\sigma_{\overline Y}^{Y'}$ of $M_+^{Y'}(\overline Y)$ and the dual cone $\sigma_X^{Y'}/\sigma_X^Y$ of $M_+^{Y}(X)\cap(\sigma_X^Y)^\perp$. If $Y''$ is a third stratum of $X$ such that $Y''\subseteq \overline {Y'}$, then the diagram
\begin{center}
\begin{tikzpicture}
\matrix[matrix of math nodes, row sep=4ex, column sep= 3em, text height= 1.5ex, text depth=.25ex]{
|(MY'')| M^{Y''}(X)\cap (\sigma_X^Y)^\perp &
|(MY''Y)| M^{Y''}(\overline Y) \\
|(MY')| M^{Y'}(X)\cap (\sigma_X^Y)^\perp &
|(MY'Y)| M^{Y'}(\overline Y)\\
};

\begin{scope}[->]
\draw (MY'')-- (MY''Y);
\draw (MY') -- (MY'Y);
\draw (MY'') -- (MY');
\draw (MY''Y) -- (MY'Y);
\end{scope}
\end{tikzpicture}
\end{center}
is commutative because all maps involved are restrictions to open or closed subschemes. It follows that the isomorphisms of cones glue to an isomorphism $\Sigma(\overline Y)\cong \Star_{\Sigma(X)}(\sigma_X^Y)$. To see that this isomorphism respects the weak embeddings, note that $M^{\Star_{\Sigma(X)}(\sigma_X^Y)}$ is equal to the sublattice $(N^X_{\sigma^Y_X})^\perp$ of $M^X$ by definition. It consists exactly of those rational functions that are invertible on $X(Y)$. Hence, they can be restricted to $\overline Y$, giving a morphism 
\begin{equation*}
M^{\Star_{\Sigma(X)}(\sigma_X^Y)}\rightarrow M^{\overline Y}.
\end{equation*}
For any rational function $f$ on $X$ that is invertible on $Y$ we have $\divv(f|_{\overline Y})=\divv(f)|_{\overline Y}$. It follows directly from this equality that the identification $\Sigma(\overline Y)\rightarrow \Star_{\Sigma(X)}(\sigma_X^Y)$ is in fact a morphism of weakly embedded cone complexes.
\end{proof}

A \emph{dominant toroidal} morphism between two toroidal embeddings $X$ and $Y$ is a dominant morphism $X\rightarrow Y$ of varieties which can be described by toric morphisms in local toric models (see \cite{AK00} for details). A dominant toroidal morphism $f\colon X\rightarrow Y$ induces a dominant morphism $\Trop(f)\colon\overline\Sigma(X)\rightarrow \overline\Sigma(Y)$ of extended cone complexes. The restrictions of $\Trop(f)$ to the cones of $\Sigma(X)$ are dual to pulling back Cartier divisors. From this we easily see that $\Trop(f)$ can be considered as a morphism of weakly embedded extended cone complexes by adding to it the data of the linear map $N^X\rightarrow N^Y$ dual to the pullback $\Gamma(Y_0,\mathcal O_Y^*)\rightarrow \Gamma(X_0,\mathcal O_X^*)$. We call a morphism $f\colon X\rightarrow Y$  \emph{toroidal} if it factors as $X\xrightarrow{f\smash{'}} \cOrb(\sigma)\xrightarrow{i} Y$, where $f'$ is dominant toroidal, and $\sigma\in\Sigma(Y)$.  By Lemma \ref{ITlem: complex of stratum} the closed immersion $i$ induces a morphism $\Trop(i)\colon\overline\Sigma(\cOrb(\sigma))\rightarrow\overline\Sigma(Y)$ of weakly embedded extended cone complexes, namely the composite of the canonical  morphism $\overline\Sigma(\cOrb(\sigma))\rightarrow \overline{\Star_{\Sigma(Y)}(\sigma)}$ and the inclusion of $\overline{\Star_{\Sigma(Y)}(\sigma)}$ in $\overline\Sigma(Y)$. Thus we can define $\Trop(f)\colon\overline\Sigma(X)\rightarrow\overline\Sigma(Y)$ as the composite $\Trop(i)\circ\Trop(f')$.

A special class of toroidal morphisms is given by \emph{toroidal modifications}. They are the analogues of the toric morphisms resulting from refinements of fans in toric geometry. For every  subdivision $\Sigma'$ of the cone complex $\Sigma$ of a toroidal embedding $X$ there is a unique toroidal modification $X\times_\Sigma\Sigma'\rightarrow X$ whose tropicalization is the subdivision $\Sigma'\rightarrow \Sigma$ \cite{KKMSD73}. This modification maps $(X\times_\Sigma\Sigma')_0$ isomorphically onto $X_0$. Modifications are compatible with toroidal morphisms in the sense that if $f\colon X\rightarrow Y$ is dominant toroidal, and $\Sigma'$ and $\Delta'$ are subdivisions of $\Sigma(X)$ and $\Sigma(Y)$, respectively, such that $\Trop(f)$ induces a morphisms $\Sigma'\rightarrow \Delta'$, then $f$ lifts to a toroidal morphism $f'\colon X\times_{\Sigma(X)}\Sigma'\rightarrow Y\times_{\Sigma(Y)}\Delta$ \cite[Lemma 1.11]{AK00}.

\begin{rem}
Our usage of the term toroidal morphism is non-standard. Non-dominant toroidal morphisms in our sense are called subtoroidal in \cite{ACP14}. The notation for toroidal modifications is due to Kazuya Kato \cite{Kato94} and is not as abusive as it may seem \cite[Prop. 9.6.14]{GR14}.
\end{rem}

\section{Intersection Theory on Weakly Embedded Cone Complexes}
\label{ITsec}

In what follows we will develop the foundations of a tropical intersection theory on weakly embedded cone complexes. Our constructions are motivated by the relation of algebraic and tropical intersection theory as well as by the well-known constructions for the embedded case studied in \cite{AR10}. In fact, those of our constructions that work primarily in the finite part of the cone complex are natural generalizations of the corresponding constructions for embedded complexes. 
Intersection theoretical constructions involving boundary components at infinity have been studied in the setup of tropical manifolds \cite{Mik06,ShawDiss} and for Kajiwara's and Payne's tropical toric varieties \cite{HenningDiss}. The latter is closer to our setup, yet definitions and proofs vary significantly from ours.

\subsection{Minkowski Weights, Tropical Cycles, and Tropical Divisors}

For the definitions of Minkowski weights and tropical cycles we need the notion of lattice normal vectors. Let $\tau$ be a codimension $1$ face of a cone $\sigma$ of a weakly embedded cone complex 
$\Sigma$.  We denote by $u_{\sigma/\tau}$ the image under the  morphism
\begin{equation*}
N^{\sigma}/N^\tau\rightarrow N^\Sigma/N^\Sigma_\tau
\end{equation*}
induced by the weak embedding of the generator of  $N^{\sigma}/N^\tau$ which is contained in the image of $\sigma\cap N^{\sigma}$ under the  projection $N^{\sigma}\rightarrow N^{\sigma}/N^\tau$, and call it the \emph{lattice normal vector of $\sigma$ relative to $\tau$}. Note that lattice normal vectors may be equal to $0$.

\begin{defn}
Let $\Sigma$ be a weakly embedded cone complex. A \emph{$k$-dimensional Minkowski weight} on $\Sigma$ is a map $c\colon\Sigma_{(k)}\rightarrow \mathds Z$ from the $k$-dimensional cones of $\Sigma$ to the integers such that it satisfies the \emph{balancing condition} around every $(k-1)$-dimensional cone $\tau\in \Sigma$: if $\sigma_1,\dots,\sigma_n$ are the $k$-dimensional cones containing $\tau$, then 
\begin{equation*}
\sum_{i=1}^n c(\sigma_i)u_{\sigma_i/\tau} = 0 \qquad\text{in } N^\Sigma/N^\Sigma_\tau.
\end{equation*} 
The $k$-dimensional Minkowski weights naturally form an abelian group, which we denote by $\Mink_k(\Sigma)$.

We define the group of \emph{tropical $k$-cycles} on $\Sigma$ by $Z_k(\Sigma)=\varinjlim \Mink_k(\Sigma')$, where $\Sigma'$ runs over all proper subdivisions of $\Sigma$. If $c$ is a $k$-dimensional Minkowski weight on a proper subdivision of $\Sigma$, we denote by $[c]$ its image in $Z_k(\Sigma)$. The group of tropical $k$-cycles on the extended complex $\overline\Sigma$ is defined by
$Z_k(\overline\Sigma)=\bigoplus_{\sigma\in\Sigma} Z_k(\Star(\sigma))$. We will write $\Mink_*(\Sigma)=\bigoplus_k\Mink_k(\Sigma)$ for the graded group of Minkowski weights, and similarly $Z_*(\Sigma)$ and $Z_*(\overline\Sigma)$ for the graded groups of tropical cycles on $\Sigma$ and $\overline\Sigma$, respectively.

The support $|A|$ of a cycle $A=[c]\in Z_k(\Sigma)$ is the union of all $k$-dimensional cones on which $c$ has non-zero weight. This is easily seen to be independent of the choice of $c$.
\end{defn}

\begin{rem}
In the definition of cycles we implicitly used that a proper subdivision $\Sigma'$ of a weakly embedded cone complex $\Sigma$ induces a morphism $\Mink_k(\Sigma)\rightarrow \Mink_k(\Sigma')$. The definition of this morphism is clear, yet a small argument is needed to show that balancing is preserved. But this can be seen similarly as in the embedded case \cite[Lemma 2.11]{AR10}.
\end{rem}

\begin{rem}
\label{ITrem:Analogy between tropical and algebraic IT}
Minkowski weights are meant as the analogues of cocycles on toroidal embeddings, whereas tropical cycles are the analogues of cycles. To see the analogy consider the toric case. There, the cohomology group of a normal toric variety $X$ corresponding to a fan $\Sigma$ is canonically isomorphic to the group of Minkowski weights on $\Sigma$ \cite{FS97}. On the other hand, for a subvariety $Z$ of $X$ this isomorphism cannot be used to assign a Minkowski weight to the cycle $[Z]$ unless $X$ is smooth. In general, we first have to take a proper transform of $Z$ to a smooth toric modification of $X$, and therefore only obtain a Minkowski weight up to refinements of fans, i.e.\ a tropical cycle. That this tropical cycle is well-defined has been shown e.g.\ in \cite{ST08,K09}.
\end{rem}

\begin{example}
Consider the weakly embedded cone complex from Example \ref{ITexample: Toric variety and P2 without two hyperplanes}b) and let us denote it by $\Sigma$. Let $\sigma$ be its maximal cone, and $\rho_1=\R\gz e_1$ and $\rho_2=\R\gz e_2$ its two rays. The two lattice normal vectors $u_{\sigma/\rho_1}$ and $u_{\sigma/\rho_2}$ are equal to $0$. Therefore, the balancing condition for $2$-dimensional Minkowski weights is trivial and we have $\Mink_2(\Sigma)=\Z$. In dimension $1$, we have $u_{\rho_1/0}= 1$ and $u_{\rho_2/0}=-1$. Hence, every balanced $1$-dimensional weight must have equal weights on the two rays, which implies $\Mink_1(\Sigma)=\Z$. In dimension $0$ there is, of course, no balancing to check and we have $\Mink_0(\Sigma)=\Z$ as well.
\end{example}

Cocycles on a toroidal embedding can be pulled back to closures of strata in its boundary. This works for Minkowski weights as well:

\begin{constr}[Pullbacks of Minkowski weights]
\label{ITconstr:pulling back Minkowski weights}
Let $\Sigma$ be a weakly embedded cone complex, and let $\gamma$ be a cone of $\Sigma$. Whenever we are given an inclusion $\tau\preceq \sigma$ of two cones of $\Sigma$ which contain $\gamma$, we obtain an inclusion $\tau/\gamma\preceq\sigma/\gamma$ of cones  in $\Star(\gamma)$ of the same codimension. By construction, the restriction of the weak embedding $\phi_{\Star(\gamma)}$ to $\sigma/\gamma$ is equal to the map $\sigma/\gamma\rightarrow (N^\Sigma/N^\Sigma_{\gamma})_\R=N^{\Star(\gamma)}_\R$ induced by $\phi_\Sigma$. It follows that there is a natural isomorphism $N^\Sigma/N^\Sigma_\tau\xrightarrow{\cong} N^{\Star(\gamma)}/N^{\Star(\gamma)}_{\tau/\gamma}$. Since $N^{\sigma/\gamma}=N^\sigma/N^\gamma$ by construction, we also have a canonical isomorphism $N^\sigma/N^\tau\xrightarrow{\cong} N^{\sigma/\gamma}/N^{\tau/\gamma}$. These isomorphisms fit into a commutative diagram
\begin{center}
\begin{tikzpicture}[auto]
\matrix[matrix of math nodes, row sep= 4ex, column sep= 3em, text height=1.5ex, text depth= .25ex]{
|(sig)| N^\sigma/N^\tau 	&	|(Sig)| N^\Sigma/N^\Sigma_\tau \\
|(sigmodgamma)| N^{\sigma/\gamma}/N^{\tau/\gamma} & |(Sgamma)| N^{\Star(\gamma)}/N^{\Star(\gamma)}_{\tau/\gamma} .\\
};
\begin{scope}[->,font=\footnotesize]
\draw (sig) -- (Sig);
\draw (sigmodgamma) -- (Sgamma);
\draw (sig) --node{$\cong$} (sigmodgamma);
\draw (Sig) --node{$\cong$} (Sgamma);
\end{scope}
\end{tikzpicture}
\end{center}
If $\tau$ has codimension $1$ in $\sigma$, we see that the image of the lattice normal vector $u_{\sigma/\tau}$ in $N^{\Star(\gamma)}/N^{\Star(\gamma)}_{\tau/\gamma}$ is the lattice normal vector $u_{(\sigma/\gamma)/(\tau/\gamma)}$. Now suppose we are given a Minkowski weight $c\in \Mink_k(\Sigma)$ with $k\geq\dim\gamma$. It follows immediately from the considerations above  that the induced weight on $\Star(\gamma)$ assigning $c(\sigma)$ to $\sigma/\gamma$ for $\sigma\in\Sigma_{(k)}$ with $\gamma\preceq\sigma$ satisfies the balancing condition. Hence it defines a Minkowski weight in $\Mink_{k-\dim\gamma}(\Star(\gamma))$. If $i\colon\Star(\gamma)\rightarrow \overline\Sigma$ is the inclusion map, we denote it by $i^*c$. In case $k<\dim\gamma$ we set $i^*c=0$. \xqed{\lozenge}
\end{constr}

\begin{defn}
Let $\Sigma$ be a weakly embedded cone complex. 
A Cartier divisor on $\Sigma$ is a continuous function $\psi\colon|\Sigma|\rightarrow\R$ which is integral linear on all cones of $\Sigma$. We denote the group of Cartier divisors by $\Div(\Sigma)$. A Cartier divisor $\psi$ is said to be combinatorially principal (\tlt{} for short) if $\psi$ is induced by a function in $M^\Sigma$ on each cone. The subgroup of $\Div(\Sigma)$ consisting of \tlt{}-divisors is denoted by $\TLT(\Sigma)$. Two divisors are called linearly equivalent if their difference is induced by a function in $M^\Sigma$. We denote by $\ClTLT(\Sigma)$ the group of \tlt{}-divisors modulo linear equivalence.
\end{defn}

\begin{rem}
Note that our usage of the term Cartier divisors is non-standard. Usually, piecewise integral linear functions on $\R^n$, or more generally embedded cone complexes, are called rational functions because they arise naturally as ``quotients'', that is differences, of tropical ``polynomials'', that is functions of the form $\R^n\rightarrow\R ,\,x\mapsto \min\{\langle m ,x\rangle\mid m\in\Delta\}$ for some finite set $\Delta\subseteq \Z^n$. As we lack the embedding, we chose a different analogy: Assuming that the cone complex $\Sigma$ comes from a toroidal embedding $X$, integer linear functions on a cone $\sigma$ correspond to divisors on $X(\sigma)$ supported on $X(\sigma)\setminus X_0$. Since the cones in $\Sigma$ and the combinatorial open subsets of $X$ are glued accordingly, this induces a correspondence between $\Div(\Sigma)$ and the group of Cartier divisors on $X$ supported away from $X_0$. Unlike in the toric case, a Cartier divisor on a combinatorial open subset $X(\sigma)$ supported away from $X_0$ does not need to be principal. In case it is, it is defined by a rational function in $\Gamma(X_0,\mathcal O_X^*)$. This means that the associated linear function on $\sigma$ is the pullback of a function in $M^X$, explaining the terminology combinatorially principal. Finally, linear equivalence is defined precisely in such a way that two divisors are linearly equivalent if and only if their associated divisors on $X$ are.
\end{rem}

The functorial behavior of tropical Cartier divisors is as one would expect from algebraic geometry. They can be pulled back along dominant morphisms of weakly embedded extended cone complexes, and along arbitrary morphisms if passing to linear equivalence. In the latter case, however, we need to restrict ourselves to \tlt{}-divisors.

\begin{constr}[Pullbacks of Cartier divisors]
\label{ITconstr:Pullbacks of Cartier divisors}
Let $f\colon\overline\Sigma\rightarrow\overline\Delta$ be a morphism of weakly embedded extended cone complexes, and let $\psi$ be a divisor on $\Delta$. If $f$ is dominant, then it follows directly from the definitions that $f^*\psi=\psi\circ f$ is a divisor on $\Sigma$.  We call it the \emph{pull-back} of $\psi$ along $f$. Moreover, if $\psi$ is combinatorially principal, then so is $f^*\psi$.  The map $f^*\colon\TLT(\Delta)\rightarrow\TLT(\Sigma)$ clearly is a morphism of abelian groups, and  it preserves linear equivalence. Therefore, it induces a morphism $\ClTLT(\Delta)\rightarrow\ClTLT(\Sigma)$ which we again denote by $f^*$. If $f$ is arbitrary, it factors uniquely as $\overline\Sigma\xrightarrow{f'}\overline{\Star_\Delta(\delta)}\xrightarrow{i} \overline\Delta$, where $\delta\in\Delta$, $f'$ is dominant, and $i$ is the inclusion map. Hence, to define a pullback $f^*:\ClTLT(\Delta)\rightarrow \ClTLT(\Sigma)$ it suffices to construct pullbacks of divisor classes on $\Delta$ to $\Star_\Delta(\delta)$. So let $\psi$ be a \tlt{}-divisor on $\Delta$. Choose $\psi_\delta\in M^\Delta$ such that $\psi|_\delta=\psi_\delta\circ\phi_\Delta|_\delta$, and denote $\psi'=\psi-\psi_\delta\circ\phi_\Delta$. Then $\psi'|_\delta=0$ and thus for every $\sigma\in\Delta$ with $\delta\preceq\sigma$ there is an induced  integral linear map $\smash{\overline\psi}'_\sigma\colon\sigma/\delta\rightarrow\R$. These maps patch together to a \tlt{}-divisor $\overline\psi'$ on $\Star_\Delta(\delta)$. Clearly, $\smash{\overline\psi}'$ depends on the choice of $\psi_\delta$, but different choices produce linearly equivalent divisors. Thus, we obtain a well-defined element $i^*\psi\in\ClTLT(\Star_\Delta(\delta))$. By construction, divisors linearly equivalent to $0$ are in the kernel of $i^*$, so there is an induced morphism $\ClTLT(\Delta)\rightarrow\ClTLT(\Star_\Delta(\delta))$ which we again denote by $i^*$. Finally, we define the pullback along $f$ by $f^*\coloneqq(f')^*\circ i^*$. 
\xqed{\lozenge}
\end{constr}

\subsection{Push-forwards}

Now we want to construct push-forwards of tropical cycles. To do so we need to know that cone complexes allow sufficiently fine proper subdivisions with respect to given subcones.

\begin{lem}
\label{ITlem:fine proper subdivisions exist}
Let $\Sigma$ be a cone complex and $\mathcal A$ a finite set of subcones of $\Sigma$. Then there exists a proper subdivision $\Sigma'$ of $\Sigma$ such that every cone in $\mathcal A$ is a union of cones in $\Sigma'$.
\end{lem}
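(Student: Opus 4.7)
The plan is to build $\Sigma'$ inductively on the dimension of cones of $\Sigma$: for each $\sigma \in \Sigma$, I will produce a rational polyhedral fan $\Sigma'_\sigma$ with support $\sigma$ such that, whenever $\tau \preceq \sigma$, the restriction $\Sigma'_\sigma$ to $\tau$ agrees with the previously-built $\Sigma'_\tau$, and such that every $\alpha \in \mathcal A$ contained in $\sigma$ is a union of cones of $\Sigma'_\sigma$. The desired proper subdivision is then $\Sigma' = \bigcup_\sigma \Sigma'_\sigma$.

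First I would perform a preparatory reduction. Each $\alpha \in \mathcal A$ is a rational polyhedral subcone of some (unique minimal) $\sigma_\alpha \in \Sigma$. I enlarge $\mathcal A$ by appending the intersection $\alpha \cap \tau$ for every $\alpha \in \mathcal A$ and every face $\tau \preceq \sigma_\alpha$; each such intersection is a rational polyhedral subcone of $\tau$, hence of $\Sigma$, and only finitely many are added. The enlarged $\mathcal A$ is then closed under intersection with faces of the minimal ambient cones. This ensures that the constraints that a refinement on a face $\tau$ must satisfy are automatically compatible with the constraints for refining any cone $\sigma$ containing $\tau$.

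Next I would induct on $\dim \sigma$. The base $\dim \sigma = 0$ is trivial: set $\Sigma'_\sigma = \{\sigma\}$. For the inductive step, assume $\Sigma'_\tau$ has been built compatibly for all proper faces $\tau \prec \sigma$. By compatibility these glue to a single rational polyhedral fan supported on $\partial \sigma$. For each $\alpha \in \mathcal A$ with $\alpha \subseteq \sigma$, the closure property of $\mathcal A$ guarantees that $\alpha \cap \tau$ lies in $\mathcal A$ for every $\tau \prec \sigma$ and hence, by the inductive hypothesis, is a union of cones in $\Sigma'_\tau$; thus the traces on $\partial \sigma$ of all relevant $\alpha$'s are already fans inside the boundary refinement. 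It now remains to extend this boundary fan to a fan on all of $\sigma$ that refines each $\alpha \subseteq \sigma$ as well. This follows from the standard polyhedral fact that a rational fan structure on the boundary of a rational polyhedral cone extends to the full cone (for instance by coning off the boundary from an interior lattice point), combined with a common refinement by the rational halfspaces that define the remaining $\alpha$'s in $N^\sigma_{\mathbb R}$.

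The main obstacle is the gluing compatibility in the inductive step: the hyperplane arrangement chosen in the interior of $\sigma$ must not introduce any new cuts on $\partial \sigma$. This is precisely where the preparatory reduction does its work. The halfspaces in $N^{\sigma_\alpha}_{\mathbb R}$ cutting out $\alpha$ can be lifted to halfspaces in $N^\sigma_{\mathbb R}$ in many ways, and since $\alpha \cap \partial \sigma$ is already a union of cones of the boundary fan, one can choose the lifts so that their traces on each face $\tau$ already appear as unions of facets in $\Sigma'_\tau$. Consequently the common refinement produced inside $\sigma$ restricts to $\Sigma'_\tau$ on each $\tau \prec \sigma$, and the inductive construction closes up.
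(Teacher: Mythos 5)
Your overall inductive scheme (closing $\mathcal A$ under intersection with faces, then building compatible fans face by face by increasing dimension) is reasonable, but the step that carries all the weight---extending the already-fixed boundary fan of $\sigma$ to a fan on all of $\sigma$ in which every $\alpha\in\mathcal A$ with minimal ambient cone $\sigma$ is a union of cones, \emph{without} introducing new cuts on $\partial\sigma$---is not actually established. Your proposed mechanism is to refine by half-spaces defining $\alpha$, ``lifted'' so that their traces on the facets of $\sigma$ are unions of cones of the previously built fans. But when $\sigma_\alpha=\sigma$ there is no lifting freedom at all: any family of half-spaces cutting out a full-dimensional $\alpha$ must include, for each facet $F$ of $\alpha$, the half-space bounded by the hyperplane spanned by $F$, and the trace of that hyperplane on $\partial\sigma$ is forced---in general it is not subordinate to the boundary fans. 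Concretely, let $\sigma=\cone(e_1,e_2,e_3)\subseteq\R^3$ and $\mathcal A=\{\alpha\}$ with $\alpha=\cone(e_1,e_2,e_1+e_2+e_3)$. Your closure step only adds faces of $\sigma$ (namely $\cone(e_1,e_2)$, $\R\gz e_1$, $\R\gz e_2$, $0$), so the induction legitimately equips every proper face of $\sigma$ with its trivial fan. The facet $\cone(e_1,e_1+e_2+e_3)$ of $\alpha$ spans the hyperplane $\{y=z\}$, whose trace on the facet $\cone(e_2,e_3)$ of $\sigma$ is the ray $\R\gz(e_2+e_3)$; cutting the coned-off fan with this (forced) half-space subdivides that facet and destroys compatibility with the fan already chosen on it. An extension does exist here (the stellar subdivision at $e_1+e_2+e_3$), but it is not produced by globally linear cuts, which is exactly the point: a prescribed boundary fan is incompatible with hyperplane-arrangement refinements.

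So what your induction really needs is a relative extension lemma: given a rational fan structure on $\partial\sigma$ and finitely many rational subcones of $\sigma$ whose traces on $\partial\sigma$ are unions of its cones, there is a fan with support $\sigma$ restricting to the given fan on $\partial\sigma$ and containing each subcone as a union of cones. This is true, but it is essentially the whole content of the lemma and requires a genuinely piecewise-linear construction; coning off from an interior point only settles the case with no interior constraints. The paper sidesteps the gluing problem entirely by a global argument: first pass to a strictly simplicial proper subdivision of $\Sigma$ (intersecting the cones of $\mathcal A$ with its cones), map $|\Sigma|$ to $\R^{\Sigma_{(1)}}$ by sending primitive ray generators to unit vectors (a map injective on each cone), choose a single complete fan $\Delta$ there such that the image of every cone of $\Sigma$ and of $\mathcal A$ is a union of cones of $\Delta$, and pull back; compatibility on shared faces is automatic because one and the same $\Delta$ is used everywhere. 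Either adopt such a global construction or supply a proof of the relative extension statement; as written, the inductive step does not go through.
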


\begin{proof}
If $\Sigma$ is a fan in $N_\R$ for some lattice $N$, it is well-known how to obtain a suitable subdivision. Namely, we choose linear inequalities for every $\tau\in \mathcal A$ and intersect the cones in $\Sigma$ with the half-spaces defined by these inequalities. In the general case we may assume that $\Sigma$ is strictly simplicial by choosing a proper strictly simplicial subdivision $\Sigma'$ and intersecting all cones in $\mathcal A$ with the cones in $\Sigma'$. Let $\phi\colon|\Sigma|\rightarrow\R^{\Sigma_{(1)}}$ be the weak embedding sending the primitive generator of a ray $\rho\in\Sigma_{(1)}$ to the generator $e_\rho$ of $\R^{\Sigma_{(1)}}$ corresponding to $\rho$. Let $\Delta$ be a complete fan in $\R^{\Sigma_{(1)}}$ such that $\phi(\sigma)$ is a union of cones of $\Delta$ for every $\sigma\in\Sigma\cup \mathcal A$. We define $\Sigma'$ as the proper subdivision of $\Sigma$ consisting of the cones $\phi^{-1}\delta\cap\sigma$ for $\delta\in\Delta$ and $\sigma\in\Sigma$ and claim that it satisfies the desired property. Let $\tau\in \mathcal A$, and let $\sigma\in\Sigma$ be a cone containing it. Then by construction there exist cones $\delta_1,\dots,\delta_k$ of $\Delta$ such that $\phi(\tau)=\bigcup_i\delta_i$. Each $\delta_i$ is in the image of $\sigma$, hence $\phi(\phi^{-1}\delta_i\cap\sigma)=\delta_i$. With this it follows that $\phi(\tau)=\phi\left(\bigcup_i(\phi^{-1}\delta_i\cap\sigma)\right)$, and hence that $\tau$ is a union of cones in $\Sigma'$ since $\phi$ is injective on $\sigma$ by construction.
\end{proof}

\begin{constr}[Push-forwards]
\label{ITconstr:push-forwards}
 Let $f\colon\overline\Sigma\rightarrow\overline\Delta$ be a morphism of weakly embedded extended cone complexes, and let $A\in Z_k(\overline\Sigma)$ be a tropical cycle. We construct a push-forward $f_*A\in Z_k(\overline\Delta)$ similarly as in the embedded case \cite{GKM09}. First suppose that $f$ is dominant, and that  $A$ is represented by a Minkowski weight $c$ on a proper subdivision $\Sigma'$ of $\Sigma$. We further reduce to the case where the images of cones in $\Sigma'$ are cones in $\Delta$: for every $\sigma'\in\Sigma'$ the image $f(\sigma')$ is a subcone of $\Delta$. By Lemma \ref{ITlem:fine proper subdivisions exist} we can find a proper subdivision $\Delta'$ of $\Delta$ such that each of these subcones is union of cones of $\Delta'$. The cones $\sigma'\cap f^{-1}\delta'$ for $\sigma'\in\Sigma'$ and $\delta'\in\Delta'$ form a proper subdivision $\Sigma''$ of $\Sigma'$, and standard arguments show that $f(\sigma'')\in\Delta'$ for all $\sigma''\in\Sigma''$. As $c$ induces a Minkowski weight on $\Sigma''$, and Minkowski weights on $\Delta'$ define tropical cycles on $\Delta$ we assume $\Sigma=\Sigma''$ and $\Delta=\Delta'$ and continue to construct a Minkowski weight $f_*c$ on $\Delta$. 
For a $k$-dimensional cone $\delta\in\Delta$ we define
\begin{equation*}
f_*c(\delta)=\sum_{\sigma\mapsto\delta} [N^\delta:f(N^\sigma)]c(\sigma),
\end{equation*}
where the sum is over all $k$-dimensional cones of $\Sigma$ with image equal to $\delta$. A slight modification of the argument for the embedded case \cite[Prop. 2.25]{GKM09} shows that $f_*c$ is balanced. We define the cycle $f_*A\in Z_k(\Delta)$ as the tropical cycle defined by $f_*c$. This definition is independent of the choices of subdivisions, as can be seen as follows. The support of $f_*c$ is equal to the union of the images of the $k$-dimensional cones contained in $|A|$ on  which $f$ is injective, hence independent of all choices. Let us denote it  by $|f_*A|$. Outside a $(k-1)$-dimensional subset of $|f_*A|$, namely the image of the union of all cones of $\Sigma$ of dimension less or equal to $k$ whose image has dimension strictly less than $k$, every point of $|f_*A|$ has finitely many preimages in $|A|$, and they all lie in the relative interiors of $k$-dimensional cones of $\Sigma$ on which $f$ is injective. Thus, locally, the number of summands occurring in the definition of $f_*c$ does not depend on the chosen subdivisions. Since the occurring lattice indices are local in the same sense, the independence of $f_*A$ of all choices follows. It is immediate from the construction that pushing forward is linear in $A$.

Now suppose that $f$ is arbitrary, and $A$ is contained in $Z_k(\Star_\Sigma(\sigma))$ for some $\sigma\in\Sigma$. The restriction $f|_{\overline{\Star_\Sigma(\sigma)}}$ defines a dominant morphism to $\overline{\Star_\Delta(\delta)}$ for some $\delta\in\Delta$. Using the construction for the dominant case we define 
\begin{equation*}
f_*A\coloneqq \left(f|_{\overline{\Star_\Sigma(\sigma)}}\right)_{\hspace{-.2em}*}A\in Z_k(\Star_\Delta(\delta))\subseteq Z_k(\overline\Delta)
\end{equation*}
and call it the \emph{push-forward} of $A$. Extending by linearity the push-forward defines a graded morphism $f_*\colon Z_*(\overline\Sigma)\rightarrow Z_*(\overline\Delta)$.
\xqed{\lozenge}
\end{constr}

\begin{prop}
\label{ITprop:push-forwards are functorial}
Let $f\colon\overline\Sigma\rightarrow\overline\Delta$ and $g\colon\overline\Delta\rightarrow\overline\Theta$ be morphisms of weakly embedded extended cone complexes. Then
\begin{equation*}
(g\circ f)_*=g_*\circ f_*.
\end{equation*}
\end{prop}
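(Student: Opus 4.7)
\emph{Proof plan.} The plan is to reduce to the case of dominant morphisms and then verify the equality of the resulting Minkowski weights by multiplicativity of lattice indices.

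By linearity and the direct-sum decomposition in the definition of $Z_*(\overline\Sigma)$, it suffices to treat a cycle $A \in Z_k(\Star_\Sigma(\sigma))$ for a single cone $\sigma \in \Sigma$. By the definition of a morphism of extended cone complexes, the restriction of $f$ to $\overline{\Star_\Sigma(\sigma)}$ factors dominantly through $\overline{\Star_\Delta(\delta)}$ for some $\delta \in \Delta$, and similarly the restriction of $g$ to $\overline{\Star_\Delta(\delta)}$ factors dominantly through $\overline{\Star_\Theta(\theta)}$ for some $\theta \in \Theta$. The composite $g \circ f$ restricted to $\overline{\Star_\Sigma(\sigma)}$ then factors dominantly through this same $\overline{\Star_\Theta(\theta)}$, so after replacing each complex by the appropriate star we may assume $f$ and $g$ are dominant.

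In the dominant case, pick a Minkowski weight $c$ on a proper subdivision $\Sigma'$ of $\Sigma$ representing $A$. By repeated application of Lemma \ref{ITlem:fine proper subdivisions exist}, one finds proper subdivisions $\Sigma''$, $\Delta'$, $\Theta'$ of $\Sigma'$, $\Delta$, $\Theta$ such that $f$ sends cones of $\Sigma''$ to cones of $\Delta'$ and $g$ sends cones of $\Delta'$ to cones of $\Theta'$: first subdivide $\Theta$ so that all images $g(\delta)$ for $\delta \in \Delta$ and $(g \circ f)(\sigma')$ for $\sigma' \in \Sigma'$ are unions of cones of $\Theta'$, then subdivide $\Delta$ so that every $f(\sigma')$ and every $g$-preimage of a cone of $\Theta'$ is a union of cones of $\Delta'$, and finally pull the resulting decomposition back through $f$. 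With respect to these subdivisions, both $(g \circ f)_* c$ and $g_*(f_* c)$ are Minkowski weights on $\Theta'$, and it suffices to compare them on each $k$-dimensional $\theta \in \Theta'$.

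For such $\theta$, the weight of $(g \circ f)_* c$ at $\theta$ is $\sum_\sigma [N^\theta : (g f)(N^\sigma)]\,c(\sigma)$, summed over $k$-dimensional $\sigma \in \Sigma''$ with $(g \circ f)(\sigma) = \theta$. Grouping these $\sigma$ by $\delta := f(\sigma)$ — which is necessarily $k$-dimensional, for otherwise $(g \circ f)(\sigma) \subseteq g(\delta)$ would have dimension less than $k$ — one sees that both $f|_\sigma$ and $g|_\delta$ induce injective lattice maps with images of finite index. The injectivity of $g \colon N^\delta \hookrightarrow N^\theta$ gives $[N^\delta : f(N^\sigma)] = [g(N^\delta) : (gf)(N^\sigma)]$, so multiplicativity of lattice indices yields
\begin{equation*}
[N^\theta : g(N^\delta)] \cdot [N^\delta : f(N^\sigma)] = [N^\theta : (gf)(N^\sigma)],
\end{equation*}
matching the sum for $g_*(f_* c)(\theta) = \sum_\delta [N^\theta : g(N^\delta)] \sum_\sigma [N^\delta : f(N^\sigma)]\,c(\sigma)$ to that for $(g \circ f)_* c(\theta)$ term by term. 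The main obstacle is the coherent refinement step: one has to choose subdivisions of all three complexes so that both $f$ and $g$ simultaneously send cones to cones, which is the role of Lemma \ref{ITlem:fine proper subdivisions exist}; once this is arranged the weight identity is a direct analogue of the embedded case treated in \cite{GKM09}.
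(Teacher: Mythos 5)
Your overall route is the same as the paper's (reduce to dominant morphisms via the star decomposition, then run the embedded-case index computation), and your reduction step and the final lattice-index identity $[N^\theta:g(N^\delta)]\cdot[N^\delta:f(N^\sigma)]=[N^\theta:(gf)(N^\sigma)]$ are fine. The gap is in the step you yourself call the main obstacle: the coherent refinement. Your order of subdivisions does not deliver the property ``$g$ sends cones of $\Delta'$ onto cones of $\Theta'$''. Refining $\Delta$ by the subcones $\delta\cap g^{-1}\theta'$ only guarantees that each cone $\delta'\in\Delta'$ is mapped \emph{into} some cone of $\Theta'$; it does not make $g(\delta')$ a cone (i.e.\ a face) of $\Theta'$. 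Since $\Delta'$ must simultaneously refine the images $f(\sigma')$, and since Lemma \ref{ITlem:fine proper subdivisions exist} may return an arbitrarily fine subdivision, $\Delta'$ is in general strictly finer than the $g$-pullback of $\Theta'$, and then $g(\delta')$ is typically a proper subcone of a cone of $\Theta'$ that is not a face (already for $g=\mathrm{id}$ of a single two-dimensional cone, if $\Delta'$ subdivides a cone of $\Theta'$). The same problem hits $(g\circ f)$: knowing that $(gf)(\sigma')$ is a \emph{union} of cones of the previously fixed $\Theta'$ does not make $(gf)(\sigma'')$ a cone of $\Theta'$ for the smaller cones $\sigma''\in\Sigma''$. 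Consequently the two displayed sums are not, for your choice of subdivisions, the defining formulas of $(g\circ f)_*c$ and $g_*(f_*c)$, and the term-by-term comparison is not yet justified.

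The repair is a reordering, which is how the standard ``cones map onto cones'' trick is meant to be used: first choose a proper subdivision $\Delta'$ of $\Delta$ such that every $f(\sigma')$, $\sigma'\in\Sigma'$, is a union of cones of $\Delta'$; \emph{then} choose $\Theta'$ such that every $g(\delta')$, $\delta'\in\Delta'$, is a union of cones of $\Theta'$; then replace $\Delta'$ by its pullback $\Delta''=\{\delta'\cap g^{-1}\theta'\}$ (this still refines the sets $f(\sigma')$, and now $g(\delta'')=g(\delta')\cap\theta'$ is a face of $\theta'$, hence a cone of $\Theta'$); finally pull $\Delta''$ back through $f$ to get $\Sigma''$. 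With these choices $f$, $g$ and $g\circ f$ all map cones onto cones, and your index computation then goes through verbatim. (The paper itself only records the reduction to the dominant case and refers to the embedded-case argument for exactly this computation.)
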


\begin{proof}
By construction it suffices to prove that if $f$ and $g$ are dominant, and $A\in Z_k(\Sigma)$, then $(g\circ f)_*A=g_*(f_*A)$. But this works analogously to the embedded case \cite[Remark 1.3.9]{HannesDiss}.
\end{proof}

\subsection{Intersecting with Divisors}

Next we construct an intersection product $\psi\cdot A$ for a divisor $\psi$ and a tropical cycle $A\in Z_k(\Sigma)$. Note that when $\Sigma=\Sigma(X)$ for a toroidal embedding $X$, the divisor $\psi$ corresponds to a Cartier divisor $D$ on $X$ supported away from $X_0$. We can only expect an intersection product $D\cdot Z$ with a subvariety $Z$ of $X$ to be defined without passing to rational equivalence if $Z$ intersects $X_0$ non-trivially. In the tropical world this is reflected by the requirement of $A$ living in the finite part of $\overline\Sigma$.

\begin{constr}[Tropical intersection products]
\label{ITconstr:intersection products with tropical Cartier divisors}
Let $\Sigma$ be a weakly embedded cone complex, $A\in Z_k(\Sigma)$ a tropical cycle, and $\psi$ a  Cartier divisor on $\overline\Sigma$. We construct the intersection product $\psi\cdot A\in Z_{k-1}(\overline\Sigma)$. First suppose that $A$ is defined by a Minkowski weight $c\in \Mink_k(\Sigma)$. Writing $i_\rho\colon\Star(\rho)\rightarrow\overline\Sigma$ for the inclusion map for all rays $\rho\in\Sigma_{(1)}$, we define
\begin{equation*}
\psi\cdot A =\sum_{\rho\in\Sigma_{(1)}}\psi(u_\rho)[i^*_\rho c] \in Z_{k-1}(\overline\Sigma),
\end{equation*}
where $u_\rho$ is the primitive generator of $\rho$.

In general, $A$ is represented by a Minkowski weight on a proper subdivision $\Sigma'$ of $\Sigma$. Let $f\colon\overline\Sigma'\rightarrow\overline\Sigma$ be the morphism of weakly embedded extended cone complexes induced by the subdivision. Then we define 
\begin{equation*}
\psi\cdot A = f_*(\psi\cdot_{\Sigma'} A),
\end{equation*}
where $\psi\cdot_{\Sigma'}A$ denotes the cycle in $Z_{k-1}(\overline\Sigma')$ constructed above.

To verify that this is independent of the choice of $\Sigma'$ it suffices to prove that in case $A$ is already represented by a Minkowski weight on $\Sigma$, this gives the same definition as before. Let $c'\in\Mink_k(\Sigma')$ and $c\in\Mink_k(\Sigma)$ be the Minkowski weights representing $A$. For a ray $\rho\in\Sigma$, which is automatically a ray in $\Sigma'$, it is not hard to see that the complex $\Star_{\Sigma'}(\rho)$ is a proper subdivision of $\Star_\Sigma(\rho)$. Writing $i_\rho'\colon S_{\Sigma'}(\rho)\rightarrow \overline\Sigma'$ for the inclusion we see that $f_*[(i'_\rho)^* c']=[i_\rho^*c]$. For a ray $\rho'\in\Sigma'\setminus\Sigma$ there exists a minimal cone $\tau\in\Sigma$ containing it, which has dimension at least $2$. Every $k$-dimensional cone $\sigma'\in\Sigma'$ containing $\rho'$ with $c'(\sigma')\neq 0$ is contained in a $k$-dimensional cone $\sigma\in\Sigma$. Then $\tau$ must be a face of $\sigma$ and hence the image of $\sigma'/\rho'\subseteq\Star_{\Sigma'}(\rho')$ in $\Star_\Sigma(\tau)$ has dimension at most $\dim(\sigma/\tau)$, which is strictly less than $k$. This shows that  $f_*[(i_{\rho'}')^*c']=0$ and finishes the prove of the independence of choices.
\xqed{\lozenge}
\end{constr}

\begin{rem}
\label{ITrem:Computation of weights in intersection product}
In the construction of $\psi\cdot A$ for general $A$, represented by a Minkowski weight $c$ on the proper subdivision $\Sigma'$ of $\Sigma$, we can compute the contributions of the weights on the cones of $\Sigma'$ explicitly. Assume that $\sigma'$ is a $k$-dimensional cone of $\Sigma'$ and we want to determine its contribution to the component of $\psi\cdot A$ in $\Star_\Sigma(\tau)$ for some $\tau\in\Sigma$. We can only have such a contribution if $\sigma'$ has a ray $\rho\in\sigma'_{(1)}$ intersecting the relative interior of $\tau$. In this case, let $\delta\in\Sigma$ be the smallest cone of $\Sigma$ containing $\sigma'$. As $\sigma'\cap\relint(\tau)\neq\emptyset$ we must have $\tau\preceq\delta$. The image of $\sigma'/\rho\subseteq\Star_{\Sigma'}(\rho)$ in $\delta/\tau\subseteq\Star_\Sigma(\tau)$ is the image of $\sigma'$ under the canonical projection $\delta\rightarrow \delta/\tau$. This is $(k-1)$-dimensional if and only if the sublattice $N^{\sigma'}\cap N^\tau$ of $N^\delta$ has rank $1$. In this case, we have $\sigma'\cap\tau=\rho$, and the contribution of $\sigma'$ to the component of $\psi\cdot A$ in $\Star_\Sigma(\tau)$ is the cone $(\sigma'+\tau)/\tau$ with weight 
\begin{equation*}
\psi(u_\rho)\ind{N^\tau+N^{\sigma'}}c(\sigma'),
\end{equation*}
where $u_\rho$ is the primitive generator of $\rho$, and the occurring index is the index of $N^\tau+N^{\sigma'}$ in its saturation in $N^\delta$.

We see that $\sigma'$ contributes exactly to those boundary components of $\psi\cdot A$ which we would expect from topology. This is because ``the part at infinity'' $\overline\sigma'\setminus\sigma'$ of $\sigma'$ in $\overline\Sigma$ intersects $\Star_\Sigma(\tau)$ if and only if $\relint(\tau)\cap\sigma'\neq\emptyset$, and the intersection is $(k-1)$-dimensional if and only if $N^{\sigma'}\cap N^\tau$ has rank $1$.
\end{rem}

As mentioned earlier, Minkowski weights on the cone complex $\Sigma(X)$ of a toroidal embedding $X$ correspond to cocycles on $X$. For an (algebraic) Cartier divisor $D$ on $X$ supported away from $X_0$ and a cocycle $c$ in $A^*(X)$ the natural ``intersection'' is the cup product $c_1(\mathcal O_X(D))\cup c $. It turns out that we can describe the associated Minkowski weight of the product by a tropical cup product in case the associated tropical divisor of $D$ is combinatorially  principal (cf.\ Proposition \ref{RATprop:Cup products and Tropicalization commute}).

\begin{constr}[Tropical cup products]
\label{ITconstr:Cup product}
Let $\Sigma$ be a weakly embedded cone complex, $c\in \Mink_k(\Sigma)$ a Minkowski weight, and $\psi$ a \tlt{}-divisor on $\Sigma$. We construct a Minkowski weight $\varprod{\psi}{c}\in \Mink_{k-1}(\Sigma)$. For every $\sigma\in\Sigma$ choose an integral linear function $\psi_\sigma\in M^\Sigma$ such that  $\psi_\sigma\circ\phi_\Sigma|_\sigma =\psi|_\sigma$. Note that $\psi_\sigma$ is not uniquely defined by this property, but its restriction $\psi_\sigma|_{N^\Sigma_\sigma}$ is.
Whenever we have an inclusion $\tau\preceq\sigma$ of cones of $\Sigma$, the function $\psi_\sigma-\psi_\tau$ vanishes on $N^\Sigma_\tau$ and hence defines a morphism $N^\Sigma/N^\Sigma_\tau\rightarrow\Z$. Therefore, we can define a weight on the $(k-1)$-dimensional cones of $\Sigma$ by 
\begin{equation*}
\varprod{\psi}{c}\colon\tau\mapsto\sum_{\sigma:\tau\prec\sigma}(\psi_\sigma-\psi_\tau)(c(\sigma)u_{\sigma/\tau}),
\end{equation*}
where the sum is taken over all $k$-dimensional cones of $\Sigma$ containing $\tau$. The balancing condition for $c$ ensures that this weight is independent of the choices involved. It has been proven in \cite[Prop. 3.7a]{AR10} for the embedded case that $\psi\cup c$ is a Minkowski weight again, and a slight modification of the proof in loc.\ cit.\ works in our setting as well.

The cup-product can  be extended to apply to tropical cycles as well. For $A\in Z_k(\Sigma)$ there is a subdivision $\Sigma'$ of $\Sigma$ such that $A$ is represented by a Minkowski weight $c\in \Mink_k(\Sigma')$. It is easy to see that the tropical cycle $[\varprod{\psi}{c}]$ does not depend on the choice of $\Sigma'$. Thus, we can define $\varprod{\psi}{A}\coloneqq[\varprod{\psi}{c}]$. For details we again refer to \cite{AR10}.

It is immediate from the definition that the cup-product is independent of the linear equivalence class of the divisor. Therefore, we obtain a pairing
\begin{equation*}
\ClTLT(\Sigma)\times Z_*(\Sigma)\rightarrow Z_*(\Sigma),
\end{equation*}
which is easily seen to be bilinear. It can be extended to include tropical cycles in  the boundary of $\overline\Sigma$. Let $\tau\in\Sigma$, and denote the inclusion $i:\Star(\tau)\rightarrow\overline\Sigma$ by $i$. For a tropical cycle $A\in\Star_\Sigma(\tau)$ and a divisor class $\overline\psi\in\ClTLT(\Sigma)$ we define $\varprod{\overline\psi}{A}\coloneqq\varprod{i^*\overline\psi}{A}$. In this way we obtain a bilinear map
\begin{equation*}
\ClTLT(\Sigma)\times Z_*(\overline\Sigma)\rightarrow Z_*(\overline\Sigma). \xqedhere{4.29cm}{\lozenge}
\end{equation*}
\end{constr}

In algebraic geometry, intersecting a cycle with multiple Cartier divisors does not depend on the order of the divisors. We would like to prove an analogous result for tropical intersection products on cone complexes. However, to define multiple intersections we need rational equivalence, which we will only introduce in Definition \ref{ITdefn:Rational Equivalence}. For cup-products on the other hand there is no problem in defining multiple intersections and they do, in fact, not depend on the order of the divisors:

\begin{prop}
\label{ITprop:cup products commute}
Let $\Sigma$ be a weakly embedded cone complex, $A\in Z_*(\overline\Sigma)$ a tropical cycle, and $\psi,\chi\in\TLT(\Sigma)$ two \tlt{}-divisors. Then we have
\begin{equation*}
\varprod{\psi}{(\varprod{\chi}{A})}= \varprod{\chi}{(\varprod{\psi}{A})}.
\end{equation*}
\end{prop}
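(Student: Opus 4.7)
The plan is to mimic the Allermann--Rau argument \cite[Prop.~3.7(b)]{AR10} with minor modifications needed by our non-embedded setting.

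First I would reduce to the case where $A$ is represented by a Minkowski weight on $\Sigma$ itself. If $A\in Z_*(\Star_\Sigma(\gamma))$ for some $\gamma\in\Sigma$, then by definition $\varprod{\psi}{A}=\varprod{i_\gamma^*\psi}{A}$ and $\varprod{\chi}{A}=\varprod{i_\gamma^*\chi}{A}$, so the claim on $\overline\Sigma$ reduces to the corresponding claim for $\Star(\gamma)$ and the pulled-back divisors. After further replacing $\Sigma$ by a proper subdivision I may assume $A=[c]$ with $c\in\Mink_k(\Sigma)$; indeed, the formula in Construction \ref{ITconstr:Cup product} depends only on the linear data of $\psi$ and $\chi$ along cones, so cupping with a $\Sigma$-divisor commutes with passage to proper subdivisions.

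The heart of the proof is the local identity
\begin{equation*}
(\varprod{\psi}{(\varprod{\chi}{c})})(\tau)=(\varprod{\chi}{(\varprod{\psi}{c})})(\tau)
\end{equation*}
for every $(k-2)$-dimensional cone $\tau\in\Sigma$. To establish it, I would fix elements $\psi_\tau,\chi_\tau\in M^\Sigma$ realizing $\psi,\chi$ on $\tau$, and for each cone $\sigma$ with $\tau\preceq\sigma$ choose $\psi_\sigma,\chi_\sigma\in M^\Sigma$ such that $\psi_\sigma-\psi_\tau$ and $\chi_\sigma-\chi_\tau$ vanish on $N^\Sigma_\tau$. Expanding both iterated cup products with these particular representatives rewrites each side as a sum indexed by the $k$-dimensional cones $\sigma\in\Sigma$ with $\tau\preceq\sigma$, each contribution being determined by the two rays of the two-dimensional quotient $\sigma/\tau$ and the lattice normal vectors of $\sigma$ over its two codimension-one faces through $\tau$. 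In this form the sum is visibly symmetric in $\psi$ and $\chi$, exactly as in the embedded argument of \cite[Prop.~3.7(b)]{AR10}.

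The main obstacle is administrative rather than conceptual: one must check that the weak embedding plays well with the chosen representatives. Concretely, the lattice normal vector $u_{\sigma/\tau}$ lies in $N^\Sigma/N^\Sigma_\tau$ (not in $N^\sigma/N^\tau$), so the pairings $(\psi_\sigma-\psi_\tau)(u_{\sigma/\tau})$ must be interpreted via $M^\Sigma\to\Hom(N^\Sigma/N^\Sigma_\tau,\Z)$; this is exactly why I require $\psi_\sigma-\psi_\tau$ to vanish on $N^\Sigma_\tau$, which is possible since the restriction $\psi_\sigma|_{N^\Sigma_\sigma}$ is intrinsically determined by $\psi$ on $\sigma$. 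With this care taken, the balancing of $c$ around each $(k-1)$-dimensional cone through $\tau$ combined with that symmetric reorganization yields the desired equality, and independence of the choices of representatives follows from the already-established well-definedness of each single cup product.
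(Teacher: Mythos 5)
Your proposal is correct and follows essentially the same route as the paper, which simply invokes the embedded-case argument of \cite[Prop.~3.7]{AR10}: reduce via the star/subdivision conventions to a Minkowski weight on $\Sigma$ itself, then verify the symmetry of the iterated cup product locally at each $(k-2)$-dimensional cone using chosen representatives in $M^\Sigma$ and the balancing of $c$ around the intermediate $(k-1)$-dimensional cones. The only caveat is cosmetic: the $\psi_\sigma,\chi_\sigma$ must of course be chosen to realize $\psi,\chi$ on $\sigma$ (vanishing of the differences on $N^\Sigma_\tau$ is then automatic), and the reorganized sum is symmetric only after the balancing relations are used, which you do acknowledge.
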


\begin{proof}
This can be proven similarly as in the embedded case \cite[Prop. 3.7]{AR10}.
\end{proof}

Next we will consider the projection formula for weakly embedded cone complexes. We will prove two versions, one for ``$\cdot$''-products and one for ``$\varprod{}{}$''-products.

\begin{prop}
\label{ITprop:Projection formula}
Let $f\colon\overline\Sigma\rightarrow\overline\Delta$ be a morphism between weakly embedded cone complexes.
\begin{enumerate}[label=\emph{\alph*})]
\item If $f$ is dominant, $A\in Z_*(\Sigma)$, and $\psi$ is a divisor on $\Delta$, then 
\begin{equation*}
f_*(f^*\psi\cdot A)= \psi\cdot f_* A.
\end{equation*}
\item For $\overline\psi\in\ClTLT(\Delta)$ and $A\in Z_*(\overline\Sigma)$ we have
\begin{equation*}
f_*(\varprod{f^*\overline\psi}{A})= \varprod{\overline\psi}{f_* A}.
\end{equation*}
\end{enumerate}
\end{prop}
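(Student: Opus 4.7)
The plan is to handle both parts by reducing to an equality of Minkowski weights and then evaluating cone by cone, mimicking the embedded arguments of \cite{AR10,GKM09} but carrying along the weak embedding $\phi_\Sigma$ in the lattice indices. By linearity I may take $A$ to be a single tropical cycle, and by applying Lemma \ref{ITlem:fine proper subdivisions exist} to the image fan of the subdivision on which $A$ is defined I can produce proper subdivisions $\Sigma' \to \Sigma$ and $\Delta' \to \Delta$ such that $A$ is represented by a Minkowski weight $c \in \Mink_k(\Sigma')$ and such that $f$ maps cones of $\Sigma'$ to cones of $\Delta'$. Since $f_*$, $\psi \cdot(-)$ and $\varprod{\psi}{(-)}$ are constructed so that the output is independent of the subdivision (Constructions \ref{ITconstr:push-forwards}, \ref{ITconstr:intersection products with tropical Cartier divisors}, \ref{ITconstr:Cup product}), I may assume throughout that $\Sigma = \Sigma'$, $\Delta = \Delta'$ and that $f$ is cellular.

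For part (a), expanding both sides using the ray-by-ray formula for $\psi \cdot (-)$ gives
\[
f_*(f^*\psi\cdot A) = \sum_{\rho \in \Sigma_{(1)}} \psi(f(u_\rho))\, f_*[i_\rho^* c], \qquad \psi\cdot f_*A = \sum_{\eta \in \Delta_{(1)}} \psi(u_\eta)\, [j_\eta^* f_*c],
\]
where $i_\rho$ and $j_\eta$ denote the inclusions of the respective stars. Rays $\rho$ collapsed by $f$ contribute zero because $\psi(0)=0$; a ray $\rho$ with $f(\rho) = \eta \in \Delta_{(1)}$ satisfies $f(u_\rho) = m_\rho u_\eta$ for some $m_\rho \in \Z_{>0}$, so regrouping by $\eta$ reduces the claim to the identity
\[
\sum_{\rho:\, f(\rho) = \eta} m_\rho\, f_*[i_\rho^* c] \;=\; [j_\eta^* f_*c] \qquad \text{in } Z_{k-1}(\Star_\Delta(\eta))
\]
for each ray $\eta$ of $\Delta$. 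Both sides are Minkowski weights on $\Star_\Delta(\eta)$, and I evaluate them on each top-dimensional cone $\delta/\eta$ with $\delta \in \Delta_{(k)}$, $\eta \preceq \delta$. For a $k$-dimensional cone $\sigma$ of $\Sigma$ with $f(\sigma) = \delta$ the restriction $f|_\sigma$ is an integral linear bijection onto $\delta$, so the rays of $\sigma$ correspond bijectively with those of $\delta$; in particular there is a unique ray $\rho_\sigma \preceq \sigma$ with $f(\rho_\sigma) = \eta$. The identity then reduces to the lattice relation
\[
m_{\rho_\sigma}\, [N^{\delta/\eta}:f(N^{\sigma/\rho_\sigma})] \;=\; [N^\delta : f(N^\sigma)],
\]
which I read off from the map of short exact sequences $0 \to N^{\rho_\sigma} \to N^\sigma \to N^{\sigma/\rho_\sigma} \to 0$ and $0 \to N^\eta \to N^\delta \to N^{\delta/\eta} \to 0$: the snake-lemma factorisation gives $[N^\delta:f(N^\sigma)] = [N^\eta : f(N^\sigma)\cap N^\eta]\cdot [N^{\delta/\eta}:f(N^{\sigma/\rho_\sigma})]$, and primitivity of $u_{\rho_\sigma}$ in $N^\sigma$ forces $f(N^\sigma) \cap N^\eta = f(N^{\rho_\sigma}) = m_{\rho_\sigma} N^\eta$.

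For part (b) I choose a \tlt{}-representative $\psi$ of $\overline\psi$ together with linear extensions $\psi_\delta \in M^\Delta$ of $\psi|_\delta$ for each $\delta \in \Delta$, and take $(f^*\psi)_\sigma \coloneqq \psi_{f(\sigma)} \circ f \in M^\Sigma$ as the corresponding linear extension for $f^*\psi$ on $\sigma \in \Sigma$. Expanding
\[
\varprod{\psi}{f_*c}(\gamma) = \sum_{\delta\succ\gamma,\, \dim\delta = k} (\psi_\delta - \psi_\gamma)(u_{\delta/\gamma}) \sum_{\sigma \mapsto \delta} [N^\delta : f(N^\sigma)]\, c(\sigma)
\]
and, for the left hand side,
\[
f_*(\varprod{f^*\psi}{c})(\gamma) = \sum_{\tau\mapsto\gamma,\,\dim\tau=k-1} [N^\gamma:f(N^\tau)] \sum_{\sigma\succ\tau,\,\dim\sigma=k} (\psi_{f(\sigma)} - \psi_\gamma)\bigl(c(\sigma)\,f(u_{\sigma/\tau})\bigr),
\]
I re-index the second expression by $\delta = f(\sigma)$. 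For each such pair $(\sigma,\delta)$, the faces $\tau \preceq \sigma$ of codimension $1$ with $f(\tau) = \gamma$ are precisely the codimension-$1$ faces of $\sigma$ mapping bijectively to a codimension-$1$ face of $\delta$ containing $\gamma$; the same lattice identity as in (a), applied to the pair $(\tau,\gamma)$ inside $(\sigma,\delta)$, converts $[N^\gamma:f(N^\tau)]\cdot f(u_{\sigma/\tau})$ into $[N^\delta:f(N^\sigma)]\cdot u_{\delta/\gamma}$ up to the factor recorded in the image $\psi_{f(\sigma)} - \psi_\gamma = \psi_\delta - \psi_\gamma$. The two expansions therefore agree. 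The main obstacle for both parts is precisely the lattice-index bookkeeping encoded in the identity above; everything else is a cancellation of indices or an observation about cellular morphisms of cone complexes.
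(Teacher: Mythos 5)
Your argument is essentially the paper's. For part (a) you perform the same reduction (subdivide so that $A$ is a Minkowski weight and $f$ maps cones onto cones, using that push-forward and intersection product are subdivision-independent), the same ray-by-ray expansion with collapsed rays killed by $\psi(0)=0$, and the same lattice identity $[N^{\rho}:f(N^{\rho'})]\cdot[N^{\delta/\rho}:f(N^{\sigma/\rho'})]=[N^{\delta}:f(N^{\sigma})]$; your snake-lemma derivation of it, using $f(N^\sigma)\cap N^\eta=f(N^{\rho_\sigma})$, is correct. For part (b) the paper simply invokes ``a slight variation of \cite[Prop.~4.8]{AR10}'' after the same reduction, whereas you carry out that computation explicitly with the compatible linear extensions $(f^*\psi)_\sigma=\psi_{f(\sigma)}\circ f$; your re-indexing is sound (cones $\sigma$ with $f(\sigma)=\gamma$ contribute $0$ because the factor $\psi_{f(\sigma)}-\psi_\gamma$ vanishes, and each $\sigma$ mapping onto a $k$-dimensional $\delta\succ\gamma$ has a unique codimension-one face over $\gamma$), so this is a legitimate, more self-contained substitute for the citation. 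The one step you omit is the reduction of (b) to the case you actually treat: the statement allows arbitrary morphisms $f\colon\overline\Sigma\rightarrow\overline\Delta$ and cycles $A\in Z_*(\overline\Sigma)$ supported in boundary strata, and you should note (as the paper does in one sentence) that this case follows formally from the dominant, finite-part case via the factorization of $f$ through $\overline{\Star_\Delta(\delta)}$, the definition $\varprod{\overline\psi}{A}=\varprod{i^*\overline\psi}{A}$ for $A\in Z_*(\Star_\Sigma(\sigma))$, and the compatibility $i^*f^*\overline\psi=\Star_f(\sigma)^*j^*\overline\psi$, which is immediate from Construction \ref{ITconstr:Pullbacks of Cartier divisors}.
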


\begin{proof}
In both cases both sides of the equality we wish to prove are linear in $A$. Therefore, we may assume that $A$ is a $k$-dimensional tropical cycle. For part a) choose proper subdivisions $\Sigma'$ and $\Delta'$ of $\Sigma$ and $\Delta$, respectively, such that $A$ is defined by a Minkowski weight $c\in \Mink_k(\Sigma')$ and $f$ maps cones of $\Sigma'$ onto cones of $\Delta'$. Consider the commutative diagram
\begin{center}
\begin{tikzpicture}[auto]
\matrix[matrix of math nodes, row sep=4ex, column sep= 3em]{
|(Sig')| \overline\Sigma' &
|(Delt')| \overline \Delta' \\
|(Sig)| \overline\Sigma &
|(Delt)| \overline\Delta.\\
};
\begin{scope}[->,font=\footnotesize]
\draw (Sig') --node{$f'$} (Delt');
\draw (Sig) --node{$f$} (Delt);
\draw (Sig') --node{$p$} (Sig);
\draw (Delt') --node{$q$} (Delt);
\end{scope}
\end{tikzpicture}
\end{center}
By construction of the intersection product, we have $f^*\psi\cdot A= p_*[f^*\psi\cdot c]$, where we identify $f^*\psi$ with $p^*(f^*\psi)$. Therefore, we have
\begin{equation*}
f_*(f^*\psi\cdot A)=(f\circ p)_*[f^*\psi\cdot c]=q_*\big(f'_*[f^*\psi\cdot c]\big).
\end{equation*}
On the other hand, by construction of push-forward and intersection product, we have
\begin{equation*}
\psi\cdot f_*A=q_*[\psi\cdot f'_*c].
\end{equation*}
This reduces the proof to the case where images of cones in $\Sigma$ are cones in $\Delta$, and $A$ is represented by a Minkowski weight $c\in \Mink_k(\Sigma)$. Let $\rho\in\Delta$ be a ray and $\delta\in\Delta$ a $k$-dimensional cone containing it. Every cone $\sigma\in\Sigma$ mapping injectively onto $\delta$ contains exactly one ray $\rho'$ mapping onto $\rho$. The contribution of the cone $\sigma$ in $f_*(i_{\rho'}^*c)$, where $i_{\rho'}\colon\Star_\Sigma(\rho')\rightarrow \overline\Sigma$ is the inclusion, is  a weight $[N^{\delta/\rho}: f(N^{\sigma/\rho'})]c(\sigma)$ on the cone $\delta/\rho$. The lattice index is equal to 
\begin{equation*}
[N^{\delta/\rho}: f(N^{\sigma/\rho'})]=
[N^\delta/N^\rho:(f(N^\sigma)+N^\rho)/N^\rho]=
[N^\delta:f(N^\sigma)]/[N^\rho:f(N^{\rho'})].
\end{equation*}
The index $[N^\rho:f(N^{\rho'})]$ is also defined by the fact that $f(u_{\rho'})=[N^\rho:f(N^{\rho'})]u_\rho$, where $u_{\rho}$ and $u_{\rho'}$ denote the primitive generators of $\rho$ and $\rho'$, respectively. Combined, we see that the weight of the component of $f_*(f^*\psi\cdot c)$ in $\Star_\Delta(\rho)$ at $\delta/\rho$ is equal to 
\begin{equation*}
\sum_{\rho'\mapsto\rho}\sum_{\substack{\thickspace\rho'\prec\sigma,\\ \sigma\mapsto\delta}}\psi(f(u_{\rho'}))[N^\delta:f(N^\sigma)]/[N^\rho:f(N^{\rho'})]c(\sigma)=
\psi(u_\rho)\sum_{\sigma\mapsto\delta}[N^\delta:f(N^\sigma)]c(\delta),
\end{equation*}
which is precisely the multiplicity of $\psi\cdot f_*c$ at $\delta/\rho$. Because the $\Star_\Delta(\delta)$-component is $0$ for both sides of the projection formula if $\delta$ is not a ray, we have proven part a).

For part b) we may reduce to the case that $f$ is dominant and $A\in Z_k(\Sigma)$. It also  suffices to prove the equation for divisors instead of divisor classes. Analogous to part a) we then reduce to the case where $A$ is given by a Minkowski weight on $\Sigma$, and images of cones of $\Sigma$ are cones of $\Delta$. In this case, a slight variation of the proof of the projection formula for embedded complexes \cite[Prop. 4.8]{AR10} applies.
\end{proof}

\begin{prop}
\label{ITprop:cup and cap product commute}
Let $\Sigma$ be a weakly embedded cone complex.
\begin{enumerate}[label=\emph{\alph*})]
\item For every Minkowski weight $c\in \Mink_k(\Sigma)$, \tlt{}-divisor $\psi\in\TLT(\Sigma)$, and cone $\tau\in\Sigma$ we have
\begin{equation*}
i^*(\varprod{\psi}{c})=\varprod{i^*\psi}{i^*c},
\end{equation*}
where $i\colon\Star(\tau)\rightarrow\overline\Sigma$ denotes the inclusion map.
\item
If $A\in Z_*(\Sigma)$, $\chi\in\Div(\Sigma)$, and $\overline\psi\in\ClTLT(\Sigma)$, then
\begin{equation*}
\varprod{\overline\psi}{(\chi\cdot A)}= \chi\cdot(\varprod{\overline\psi}{A}).
\end{equation*}
\end{enumerate}
\end{prop}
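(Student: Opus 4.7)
The plan is to establish both parts by unpacking the definitions; part b) will then reduce to part a) via the projection formula. The computation is driven by two canonical identifications: the isomorphism $N^\Sigma/N^\Sigma_{\tau'} \xrightarrow{\cong} N^{\Star(\tau)}/N^{\Star(\tau)}_{\tau'/\tau}$ from Construction \ref{ITconstr:pulling back Minkowski weights} (matching lattice normal vectors), and the compatibility of divisor representatives between $\Sigma$ and its stars from Construction \ref{ITconstr:Pullbacks of Cartier divisors}.

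For part a) I would evaluate both Minkowski weights on a fixed cone $\tau'/\tau \in \Star(\tau)$ of dimension $k-1-\dim\tau$ (both sides vanish trivially outside this dimension). The LHS reduces via Construction \ref{ITconstr:pulling back Minkowski weights} to
\begin{equation*}
(\varprod{\psi}{c})(\tau') = \sum_{\substack{\tau' \prec \sigma \\ \dim\sigma = k}} (\psi_\sigma - \psi_{\tau'})(c(\sigma)\, u_{\sigma/\tau'}).
\end{equation*}
To evaluate the RHS, I would pick $\psi_\tau \in M^\Sigma$ representing $\psi|_\tau$ and use $\overline\psi' \coloneqq \psi - \psi_\tau \circ \phi_\Sigma$ as a representative of $i^*\psi$. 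A direct check shows that on $\sigma/\tau \in \Star(\tau)$ the representing function $\overline\psi'_{\sigma/\tau}$ is the image of $\psi_\sigma - \psi_\tau$ in $M^{\Star(\tau)}$, so $\overline\psi'_{\sigma/\tau} - \overline\psi'_{\tau'/\tau}$ is induced by $\psi_\sigma - \psi_{\tau'}$. Combined with $i^*c(\sigma/\tau) = c(\sigma)$ and the identification of lattice normal vectors, the RHS agrees termwise with the LHS.

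For part b), linearity allows me to reduce to $A = [c]$ with $c \in \Mink_k(\Sigma')$ on a proper subdivision, inducing a dominant morphism $f \colon \overline\Sigma' \to \overline\Sigma$. Construction \ref{ITconstr:intersection products with tropical Cartier divisors} gives
\begin{equation*}
\chi \cdot A = \sum_{\rho' \in \Sigma'_{(1)}} f^*\chi(u_{\rho'})\, f_*[(i^{\Sigma'}_{\rho'})^* c],
\end{equation*}
the $\rho'$-summand lying in $Z_{k-1}(\Star_\Sigma(\delta_{\rho'}))$ for the smallest cone $\delta_{\rho'} \in \Sigma$ containing $\rho'$. For each $\rho'$ I would unravel $\varprod{\overline\psi}{f_*[(i^{\Sigma'}_{\rho'})^*c]}$ using the boundary extension of the cup product from Construction \ref{ITconstr:Cup product}, apply Proposition \ref{ITprop:Projection formula}b) to the dominant restriction of $f$ between the corresponding stars, and invoke part a) on $\Sigma'$. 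This chain of identities produces $f_*(i^{\Sigma'}_{\rho'})^*\varprod{f^*\overline\psi}{c}$; summing over $\rho'$ reassembles the total as $f_*(f^*\chi \cdot_{\Sigma'} \varprod{f^*\overline\psi}{c}) = \chi \cdot \varprod{\overline\psi}{A}$.

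The main obstacle I anticipate is verifying the functoriality identity $(f|_{\overline{\Star_{\Sigma'}(\rho')}})^*(i^\Sigma_{\delta_{\rho'}})^*\overline\psi = (i^{\Sigma'}_{\rho'})^* f^*\overline\psi$ in $\ClTLT(\Star_{\Sigma'}(\rho'))$, which is what lets one chain the projection formula with part a). This reduces to checking that Construction \ref{ITconstr:Pullbacks of Cartier divisors} is compatible with the commutative square of stratum inclusions and the subdivision map, where both compositions $\Star_{\Sigma'}(\rho') \to \overline\Sigma$ agree; an unglamorous but manageable bookkeeping task.
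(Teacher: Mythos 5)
Your proposal is correct and follows essentially the same route as the paper: part a) by a termwise comparison of the two weights at a codimension-one cone of $\Star(\tau)$, using the lattice identifications of Construction \ref{ITconstr:pulling back Minkowski weights} and a normalized representative of $\psi$ (you subtract $\psi_\tau\circ\phi_\Sigma$, the paper instead assumes $\psi$ vanishes on the $(k-1)$-dimensional cone — an immaterial difference); part b) by reduction to a representing Minkowski weight on a proper subdivision, the projection formula, and part a), which is exactly the paper's argument, with your version merely spelling out the subdivision bookkeeping and the compatibility of divisor pullbacks that the paper leaves implicit.
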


\begin{proof}
We begin with part a). If $\dim(\tau)\geq k$, then both sides of the equation are equal to $0$. So assume $\dim(\tau)< k$. Let $\sigma\in\Sigma$ be a $(k-1)$-dimensional cone containing $\tau$, and let $\gamma_1,\dots,\gamma_l$ be the $k$-dimensional cones of $\Sigma$ containing $\sigma$. To compute the weights of the two sides of the equation at $\sigma/\tau$, we may assume that $\psi$ vanishes on $\sigma$. Let $\psi_i\in M^\Sigma$ be linear functions such that $\psi_i\circ\phi_\Sigma|_{\gamma_i}=\psi|_{\gamma_i}$. Then if $v_1,\dots, v_l$ are representatives in $N^\Sigma$ of the lattice normal vectors $u_{\gamma_1/\sigma},\dots,u_{\gamma_l/\sigma}$, the weight of $\varprod{\psi}{c}$ at $\sigma$, and hence the weight of $i^*(\varprod{\psi}{c})$ at $\sigma/\tau$, is given by
\begin{equation*}
\sum_{i=1}^l\psi_i(v_i)c(\gamma_i).
\end{equation*}
Since $\psi$ vanishes on $\sigma$, each $\psi_i$ vanishes on $N^\Sigma_\tau$. Thus for every $i$ we have an induced map $\overline\psi_i\in \Hom(N^\Sigma/N^\Sigma_\tau,\Z)=M^{\Star(\tau)}$. By construction of the pullbacks of divisors, these functions define $i^*\psi$ around $\sigma/\tau$. We saw in Construction \ref{ITconstr:pulling back Minkowski weights} that the lattice normal vectors $u_{(\gamma_i/\tau)/(\sigma/\tau)}$ are the images of the lattice normal vectors $u_{\gamma_i/\sigma}$ under the quotient map $N^\Sigma/N^\Sigma_\tau\rightarrow N_{\phantom{\sigma/\tau}}^{\Star(\tau)}/N^{\Star(\tau)}_{\sigma/\tau}$. This implies that the image $\overline v_i$ of $v_i$ under the quotient map $N^\Sigma\rightarrow N^{\Star(\tau)}$ represents $u_{(\gamma_i/\tau)/(\sigma/\tau)}$. Hence, the weight of $\varprod{i^*\psi}{i^*c}$ at $\sigma/\tau$ is
\begin{equation*}
\sum_{i=1}^l \overline\psi_i(\overline v_i)i^*c(\gamma_i/\tau)=\sum_{i=1}^l \psi_i(v_i)c(\gamma_i),
\end{equation*}
proving the desired equality.

For part b) we my assume that $A$ is pure-dimensional. We first treat the case in which it is given by a Minkowski weight $c$ on $\Sigma$. Using the definition of the intersection product and the bilinearity of the cup-product we get
\begin{multline*}
\varprod{\overline\psi}{(\chi\cdot A)}=\left[\sum_{\rho\in\Sigma_{(1)}}\chi(u_\rho)\left(\varprod{\overline\psi}{i^*_\rho c}\right)\right] \overset{a)}{=} \\ 
=\left[\sum_{\rho\in\Sigma_{(1)}}\chi(u_\rho)i^*_\rho\left(\varprod{\overline\psi}{c}\right)\right]=\chi\cdot(\varprod{\overline\psi}{c}),
\end{multline*} 
where $i_\rho\colon\Star(\rho)\rightarrow\overline\Sigma$  denotes the inclusion map.
The general case follows by considering a suitable proper subdivision of $\Sigma$ and using the projection formulas.
\end{proof}

\subsection{Rational Equivalence}
\label{ITsubsec:Rational equivalence}

We denote by $\tR^1$ the weakly embedded cone complex associated to  $\Pj^1$ equipped with the usual toric structure. It is the unique complete fan in $\R$, its maximal cones being $\R\gz$ and $\R\lz$. We identify the extended cone complex $\ctR^1$ with $\R\cup\{\infty,-\infty\}$, where the boundary points $\infty$ and $-\infty$ of $\ctR^1$ correspond to the boundary points $0$ and $\infty$ of $\Pj^1$, respectively. The identity function on $\R$ defines a \tlt{}-divisor on $\tR^1$ which we denote by $\idR$. Its associated cycle $\idR\cdot[\ctR^1]$, where $[\ctR^1]$ is the $1$-cycle on $\tR^1$ with weight $1$ on both of its cones, is $[\infty]-[-\infty]$. We use $\ctR^1$ to define rational equivalence on cone complexes in the same way as $\Pj^1$ is used to define rational equivalence on algebraic varieties.

\begin{defn}
\label{ITdefn:Rational Equivalence}
Let $\Sigma$ be a weakly embedded cone complex, and let $p\colon\Sigma\times\tR^1\rightarrow \Sigma$ and $q\colon\Sigma\times\tR^1\rightarrow \tR^1$ be the projections onto the first and second coordinate. For $k\in\N$ we define $R_k(\overline\Sigma)$ as the subgroup of $Z_k(\overline\Sigma)$ generated by the cycles $p_*(q^*\idR\cdot A)$, where $A\in Z_{k+1}(\Star(\sigma\times 0))$ and $\sigma\in\Sigma$. We call two cycles in $Z_k(\overline\Sigma)$ \emph{rationally equivalent} if their difference lies in $R_k(\overline\Sigma)$.  The $k$-th \emph{(tropical) Chow group}  $A_k(\overline\Sigma)$ of $\overline\Sigma$ is defined as the group of $k$-cycles modulo rational equivalence, that is $A_k(\overline\Sigma)= Z_k(\overline\Sigma)/R_k(\overline\Sigma)$. We refer to the graded group $A_*(\overline\Sigma)=\bigoplus_{k\in\N} A_k(\overline\Sigma)$ as the (total) Chow group of $\overline\Sigma$. 
\end{defn}

\begin{rem}
\label{ITrem: Rational Equivalence is well-def}
Note that $q^*(\idR)\cdot A$ is strictly speaking not defined if $A$ is not contained in $Z_*(\Sigma\times\tR^1)=Z_*(\Star(0\times 0))$. However, as $q^*(\idR)$ vanishes on $\sigma\times 0$ for $\sigma\in\Sigma$, there is a canonically defined pullback to $\Star(\sigma\times 0)$ which we can intersect with cycles in $Z_*(\Star(\sigma\times 0))$. Taking the canonical identification of $\Star(\sigma\times 0)$ with $\Star(\sigma)\times \tR^1$, this linear function is nothing but the pull-back of $\idR$ via the projection onto $\tR^1$. So if we define $R_*(\Sigma)$ as the subgroup of $Z_*(\overline\Sigma)$ generated by $p_*(q^*\idR\cdot A)$ for $A\in Z_*(\Sigma\times\tR^1)$, then $R_*(\overline\Sigma)$ is generated by the push-forwards of $R_*(\Star(\sigma))$ for $\sigma\in\Sigma$. In particular, it follows immediately from the definition that the inclusions $i\colon\overline{\Star(\sigma)}\rightarrow\overline\Sigma$ induce push-forwards $i_*\colon A_*(\overline{\Star(\sigma)})\rightarrow A_*(\overline\Sigma)$ on the level of Chow groups.
\end{rem}

\begin{rem}
\label{ITrem:Tropical rational equivalence is analogous to algebraic rational equivalence}
Our definition of tropical rational equivalence is analogous to the definition of rational equivalence in algebraic geometry given for example in \cite[Section 1.6]{F98}: two cycles on an algebraic scheme $X$ are rationally equivalent if and only if they differ by a sum of elements of the form $p_*([q^{-1}\{0\}\cap V]- [q^{-1}\{\infty\} \cap V])$, where $p$ is the projection from $X\times \Pj^1$ onto $X$, and $V$ is a subvariety of $X\times\mathds P^1$ mapped dominantly to $\mathds P^1$ by the projection $q\colon X\times\Pj^1\rightarrow \Pj^1$. Rewriting $[q^{-1}\{0\}\cap V]- [q^{-1}\{\infty\} \cap V]$ as $q^*\divv(x)\cdot [V]$, where $x$ denotes the identity on $\Pj^1\setminus \{0,\infty\}=\mathds G_m$, we see that we obtain the tropical definition from the algebraic one by replacing $\Pj^1$ by its associated cone complex and $V$ by a cycle $A\in Z_*(\Star(\sigma\times 0))$. That we do not allow $A$ to be in $Z_*(\Star(\sigma\times \tau))$ for $\tau\neq 0$ corresponds to the dominance of $V$ over $\Pj^1$.
\end{rem}

\begin{prop}
\label{ITprop:Push-forwards pass to rational equivalence}
Let $f\colon\overline\Sigma\rightarrow \overline\Delta$ be a morphism between weakly embedded extended cone complexes. Then taking push-forwards passes to rational equivalence, that is $f_*$ induces a graded morphism $A_*(\overline\Sigma)\rightarrow A_*(\overline\Delta)$, which we again denote by $f_*$. 
\end{prop}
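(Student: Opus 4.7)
The plan is to show directly that $f_*$ sends $R_k(\overline\Sigma)$ into $R_k(\overline\Delta)$, so that a well-defined map on quotients is induced. By linearity it is enough to verify this on a single generator $\alpha = p_{\Sigma,*}(q_\Sigma^*\idR \cdot A)$ with $A \in Z_{k+1}(\Star_{\Sigma\times\tR^1}(\sigma\times 0)) = Z_{k+1}(\Star_\Sigma(\sigma)\times\tR^1)$, where $p_\Sigma$ and $q_\Sigma$ denote the projections out of $\Sigma\times\tR^1$.

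First I would reduce to the case in which $\sigma = 0$ and $f$ is dominant. The cycle $\alpha$ lies in $Z_k(\Star_\Sigma(\sigma))\subseteq Z_k(\overline\Sigma)$, so by construction of push-forwards $f_*\alpha$ is computed from the restriction $f|_{\overline{\Star_\Sigma(\sigma)}}$, which factors as a dominant morphism $f'\colon\overline{\Star_\Sigma(\sigma)}\to\overline{\Star_\Delta(\delta)}$ followed by the inclusion $i\colon\overline{\Star_\Delta(\delta)}\hookrightarrow\overline\Delta$, with $\delta$ the minimal cone of $\Delta$ containing $f(\sigma)$. Remark~\ref{ITrem: Rational Equivalence is well-def} tells us that $i_*$ sends $R_k(\overline{\Star_\Delta(\delta)})$ into $R_k(\overline\Delta)$. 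Combined with functoriality of push-forwards (Proposition~\ref{ITprop:push-forwards are functorial}), this lets me replace $\Sigma$ by $\Star_\Sigma(\sigma)$ and $\Delta$ by $\Star_\Delta(\delta)$ and assume throughout that $f$ is dominant, $\sigma = 0$, and $A \in Z_{k+1}(\Sigma\times\tR^1)$.

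In this reduced setting the map $f\times\id_{\tR^1}\colon\overline{\Sigma\times\tR^1}\to\overline{\Delta\times\tR^1}$ is again dominant and fits into commutative squares with $f$, the projections $p_\Sigma,p_\Delta$, and the projections $q_\Sigma,q_\Delta$. Functoriality of push-forwards together with $q_\Sigma = q_\Delta\circ(f\times\id)$ yields
\[
f_*\alpha \;=\; f_*\,p_{\Sigma,*}(q_\Sigma^*\idR \cdot A) \;=\; p_{\Delta,*}\,(f\times\id)_*\bigl((f\times\id)^* q_\Delta^*\idR \cdot A\bigr),
\]
and since $f\times\id$ is dominant, the projection formula (Proposition~\ref{ITprop:Projection formula}(a)) converts the right-hand side into
\[
p_{\Delta,*}\bigl(q_\Delta^*\idR \cdot (f\times\id)_*A\bigr).
\]
As $(f\times\id)_*A \in Z_{k+1}(\Delta\times\tR^1)$, this expression is by definition a generator of $R_k(\overline\Delta)$, completing the proof.

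The main obstacle is organizational rather than conceptual: because both the definition of $f_*$ and the definition of $R_*(\overline\Sigma)$ are phrased via stars of cones, one has to check that the star reduction commutes with the rational equivalence machinery before one can invoke the dominant version of the projection formula. Once this is done, the calculation itself is a formal manipulation with functoriality of push-forwards and pullbacks.
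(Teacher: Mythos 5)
Your proposal is correct and takes essentially the same route as the paper: the same generators of $R_k$, the same reduction to the dominant case via the factorization through $\overline{\Star_\Delta(\delta)}$ and Remark \ref{ITrem: Rational Equivalence is well-def}, and the same key computation combining functoriality of push-forwards with the projection formula (Proposition \ref{ITprop:Projection formula}a) to rewrite $f_*p_*(q^*\idR\cdot A)$ as $p'_*\bigl(q'^*\idR\cdot(f\times\id)_*A\bigr)$. The only minor imprecision is that $\alpha=p_{\Sigma,*}(q_\Sigma^*\idR\cdot A)$ lies in $Z_k(\overline{\Star_\Sigma(\sigma)})$ rather than in $Z_k(\Star_\Sigma(\sigma))$, since the intersection with $q_\Sigma^*\idR$ produces boundary components; this does not affect your reduction, because the push-forward of such cycles is still computed from the restriction of $f$ to $\overline{\Star_\Sigma(\sigma)}$ and its stars.
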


\begin{proof}
First assume that $f$ is dominant. Let $p$, $p'$ be the projections to the first coordinates of $\Sigma\times\tR$ and $\Delta\times\tR$, respectively, and similarly let $q$, $q'$ be the projections to the second coordinates. For $A\in Z_*(\Star_{\Sigma\times\tR^1}(\sigma\times 0))$ we have 
\begin{equation*}
f_*p_*(q^*\idR \cdot A)=
p'_* (f\times\id)_*\Big(\big((f\times\id)^*q'^*\idR\big)\cdot A\Big) =
p'_*\Big(q'^*\idR\cdot \big((f\times\id)_*A\big)\Big),
\end{equation*}
which is in $R_*(\overline\Delta)$. We see $R_*(\overline\Sigma)$ is mapped to $0$ by the composite $Z_*(\overline\Sigma)\rightarrow Z_*(\overline\Delta)\rightarrow A_*(\overline\Delta)$, which proves the assertion. 

If $f$ is not dominant, then $f$ can be factored as a dominant morphism followed by an inclusion $\overline{\Star_\Delta(\delta)}\rightarrow \overline\Delta$ for some $\delta$ in $\Delta$. For both of these morphisms the push-forward respects rational equivalence, and hence it does the same for $f$.
\end{proof}

Let $\tR^0$ denote the embedded cone complex associated to the trivial fan in the zero lattice $0$. It is the tropical analogue of the one point scheme $\Pj^0$ in the sense that it is the associated cone complex of $\Pj^0$, as well as in the sense of being the terminal object in the category of weakly embedded (extended) cone complexes. Note that $ \ctR^0=\tR^0$. Since all $1$-cycles on $\tR^0\times\tR^1=\tR^1$ are of the form $k\cdot [\ctR^1]$ for some $k\in\Z$ there is a natural identification $A_*(\ctR^0)=Z_*(\ctR^0)=\Z$.

\begin{defn}
Let $\Sigma$ be a weakly embedded cone complex and $f\colon\overline\Sigma\rightarrow\ctR^0$ the unique morphism to $\ctR^0$. We define the degree map $\deg\colon Z_*(\overline\Sigma)\rightarrow\Z$ as the composite of $f_*$ and the identification $Z_*(\ctR^0)=\Z$. By Proposition \ref{ITprop:Push-forwards pass to rational equivalence} the degree map respects rational equivalence, i.e.\ there is an induced morphism $A_*(\overline\Sigma)\rightarrow\Z$, which we again denote by $\deg$.
\end{defn}

\begin{prop}
\label{ITprop:cup and intersection product equal modulo ration equivalence}
Let $\Sigma$ be a weakly embedded cone complex, let $A\in Z_*(\Sigma)$, and let $\psi\in\TLT(\Sigma)$. Then 
\begin{equation*}
\varprod{\psi}{A}=\psi\cdot A \quad\text{in } A_*(\overline\Sigma).
\end{equation*}
\end{prop}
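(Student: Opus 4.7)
I would prove the identity by exhibiting an explicit element of $R_{k-1}(\overline\Sigma)$ equal to $\psi\cdot A - \varprod{\psi}{A}$, constructed as the image under $p_*$ of the intersection of a ``graph-of-$\psi$'' cycle with the pullback $q^*\idR$, where $p,q$ are the projections of $\Sigma\times\tR^1$. By linearity and compatibility with subdivisions (Proposition~\ref{ITprop:push-forwards are functorial} and Proposition~\ref{ITprop:Push-forwards pass to rational equivalence}), I may assume $A$ is represented by a single Minkowski weight $c\in\Mink_k(\Sigma)$ on $\Sigma$ itself, and by further refining via Lemma~\ref{ITlem:fine proper subdivisions exist} that each local expression $\psi_\sigma\in M^\Sigma$ takes values of a single sign on $\sigma$. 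Under these assumptions, every graph cone $\Gamma_\sigma := \{(x,\psi_\sigma(\phi_\Sigma(x))) : x\in\sigma\}$ is contained in a single cone of $\Sigma\times\tR^1$.

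\textbf{The witness cycle.} Let $\tilde\Sigma$ be a subdivision of $\Sigma\times\tR^1$ containing every $\Gamma_\sigma$ and every ``vertical'' cone $\Gamma_\tau + \R\gz(0,\varepsilon_\tau)$, where $\varepsilon_\tau\in\{\pm 1\}$ is chosen so that the vertical cone lies on the graph's side of the hyperplane $\{t=0\}$. Define a $k$-dimensional weight $b$ on $\tilde\Sigma$ by setting $b(\Gamma_\sigma) = c(\sigma)$ and $b(\Gamma_\tau + \R\gz(0,\varepsilon_\tau)) = -\varepsilon_\tau\varprod{\psi}{c}(\tau)$, all other weights being zero. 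For the balancing check at $\Gamma_\tau$ the key is the isomorphism
\begin{equation*}
(N^\Sigma\oplus\Z)/N^{\Sigma\times\tR^1}_{\Gamma_\tau}\;\xrightarrow{\sim}\;(N^\Sigma/N^\Sigma_\tau)\oplus\Z,\qquad (v,w)\mapsto ([v],\, w - \psi_\tau(v)),
\end{equation*}
under which the lattice normal vector of $\Gamma_\sigma$ relative to $\Gamma_\tau$ becomes $(u_{\sigma/\tau},\, (\psi_\sigma-\psi_\tau)(\phi_\Sigma(u_{\sigma/\tau})))$. Summing over $\sigma\succ\tau$ with weights $c(\sigma)$ produces $(0,\varprod{\psi}{c}(\tau))$ by the balancing of $c$ and the defining formula of the cup product from Construction~\ref{ITconstr:Cup product}, and this imbalance is precisely canceled by the vertical correction.

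\textbf{Completing the equivalence.} To evaluate $p_*(q^*\idR\cdot[b])$, note that $q^*\idR$ vanishes on rays without a $t$-component, leaving only the graph rays $\Gamma_\rho$ (on which $q^*\idR$ evaluates to $\psi(u_\rho)$) and the vertical rays $\R\gz(0,\pm 1)$ (on which it evaluates to $\pm 1$). A direct computation identifying $\Star_{\tilde\Sigma}(\Gamma_\rho)$ with $\Star_\Sigma(\rho)\subseteq\overline\Sigma$, together with the identification of $\Star_{\tilde\Sigma}(\R\gz(0,\pm 1))$ with $\Sigma$, shows that the graph-ray contributions push forward to $\sum_\rho\psi(u_\rho)[i_\rho^* c] = \psi\cdot A$ at infinity, while the vertical-ray contributions push forward to $-\varprod{\psi}{A}$ in the finite part. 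Hence $p_*(q^*\idR\cdot[b]) = \psi\cdot A - \varprod{\psi}{A}$, which by the very definition of rational equivalence lies in $R_{k-1}(\overline\Sigma)$. The main technical obstacle is the careful bookkeeping of lattice indices in the subdivision $\tilde\Sigma$, particularly coordinating the signs $\varepsilon_\tau$ with the graph's sign on each cone; this is made possible by the preliminary sign-constancy refinement of $\Sigma$.
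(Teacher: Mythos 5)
Your overall strategy is the same as the paper's: build a graph-type witness cycle in $\Sigma\times\tR^1$, intersect it with $q^*\idR$, and push forward along $p$ to exhibit $\psi\cdot A-\varprod{\psi}{A}$ as an element of $R_{k-1}(\overline\Sigma)$; and your final bookkeeping (graph rays producing $\psi\cdot A$ in the boundary strata, vertical rays producing $-\varprod{\psi}{A}$ in the finite part) is correct. The gap is that your witness $b$ is not a tropical cycle. You only check balancing along the codimension-one cells $\Gamma_\tau$ lying on the graph, but balancing must also hold along the vertical codimension-one cells of the fins away from the graph, and there it fails. If $\mu$ is a $(k-2)$-cell over which $\psi$ changes sign among the adjacent $(k-1)$-cells $\tau$, then $\psi|_\mu=0$, so $\Gamma_\mu=\mu\times 0$ and the two cells $\Gamma_\mu+\R\gz(0,1)=\mu\times\R\gz$ and $\mu\times\R\lz$ of $\tilde\Sigma$ are contained only in the fins with $\varepsilon_\tau=+1$, respectively $\varepsilon_\tau=-1$ (no graph cone contains a vertical cell). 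Balancing there would force the one-sided sums $\sum_{\tau\supset\mu,\;\varepsilon_\tau=+1}\varprod{\psi}{c}(\tau)\,u_{\tau/\mu}$ and $\sum_{\tau\supset\mu,\;\varepsilon_\tau=-1}\varprod{\psi}{c}(\tau)\,u_{\tau/\mu}$ to vanish separately, whereas the balancing of $\varprod{\psi}{c}$ only gives the vanishing of the total sum. This already fails at the apex: take $\Sigma$ the fan of $\Pj^2$, $c\equiv 1$, and $\psi$ with $\psi(e_1)=1$, $\psi(e_2)=-2$, $\psi(-e_1-e_2)=0$, refined along the ray where $\psi$ vanishes inside $\cone(e_1,e_2)$; then $\varprod{\psi}{c}$ equals $-1$ on $\R\gz e_1$, $\R\gz e_2$ and $\R\gz(-e_1-e_2)$, and at the vertical ray $0\times\R\gz$ the only adjacent fins with nonzero weight are the one over $\R\gz e_1$ (and possibly the one over $\R\gz(-e_1-e_2)$, where the sign is ambiguous), so the weighted sum of lattice normal vectors is $e_1$ (resp.\ $-e_2$), not $0$.

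The defect lies in the geometry of the correcting cells, not in the idea: for the fins to share their vertical walls, so that balancing along those walls reduces to the balancing of $\varprod{\psi}{c}$, they must all point in the same vertical direction, independently of the sign of $\psi$ on $\tau$. That is exactly the paper's full graph: over each $\tau\in\Sigma'_{(k-1)}$ one attaches the whole undergraph $\{(x,t)\mid x\in\tau,\ t\le\psi(x)\}$, chopped at $t=0$ into $\Gamma^+_{\le\psi}(\tau)$ and $\Gamma^-_{\le\psi}(\tau)$ so that the pieces are subcones of $\Sigma\times\tR^1$, both carrying the weight $\varprod{\psi}{c}(\tau)$. The bounded wedges $\Gamma^+_{\le\psi}(\tau)$ contribute nothing to $q^*\idR\cdot(\,\cdot\,)$, while all the pieces $\Gamma^-_{\le\psi}(\tau)$ contain $0\times\R\lz$ and produce exactly $-\varprod{\psi}{A}$ there; the graph cones give $\psi\cdot A$ as in your computation. (Pointing all fins upward with weight $-\varprod{\psi}{c}(\tau)$ would work just as well; what cannot work is letting the direction depend on the sign of $\psi$ on $\tau$.) With the fins replaced in this way, and keeping in mind that one should not literally replace $\Sigma$ by a refinement but only work with a representative of $A$ on a subdivision (so that the statement is proved in $A_*(\overline\Sigma)$ itself), your argument becomes the proof given in the paper.
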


\begin{proof}
We may assume that $A$ is purely $k$-dimensional, as both sides are linear in $A$. We relate the intersection product and the cup product via the ``full graph'' of $\psi$, which has been used in the embedded case e.g.\ in \cite{Mik05,AR10}. Let $\Sigma'$ be a proper subdivision of $\Sigma$ such that $A$ is represented by a Minkowski weight $c$ on $\Sigma'$, and $\psi$ does not change sings on the cones of $\Sigma'$. Let $\Gamma_\psi\colon|\Sigma|\rightarrow |\Sigma\times\tR^1|$ be the graph map of $\psi$. Then the full graph $\Gamma_\psi(A)$ of $\psi$ on $A$ is the tropical cycle in $\Sigma\times\tR^1$ whose underlying subdivision consists of the subcones $\Gamma_\psi(\sigma')$ for $\sigma'\in\Sigma'_{(k)}$, and
\begin{align*}
\Gamma^+_{\leq\psi}(\tau)&= \{(x,t)\mid x\in\tau, t\leq \Gamma_\psi(x)\}\cap(\tau\times\R\gz)\quad\text{ and} \\
\Gamma^-_{\leq\psi}(\tau)&= \{(x,t)\mid x\in\tau, t\leq \Gamma_\psi(x)\}\cap(\tau\times\R\lz)
\end{align*}
for $\tau\in\Sigma'_{(k-1)}$. The weights of these cones are $c(\sigma')$ on $\Gamma_\psi(\sigma')$ and $(\varprod{\psi}{c})(\tau)$ on $\Gamma_{\leq\psi}^\pm(\tau)$. A slight modification of the argument given in \cite[Construction 3.3]{AR10} shows that this is a well-defined cycle, i.e.\ satisfies the balancing condition.

Let $p$ and $q$ be the projections onto the first and second coordinate of $\Sigma\times\tR^1$. We claim that 
\begin{equation*}
\label{ITequ:rational equivalence of cup and intersection product}
p_*\big( q^*\idR\cdot\Gamma_\psi(A)\big) = \psi\cdot A\ -\varprod{\psi}{A}.
\end{equation*}
To verify this let us analyze how the cones of $\Gamma_\psi(A)$ contribute to the left hand side of the equation using the computation of Remark \ref{ITrem:Computation of weights in intersection product}. It is easily seen that the cones of the form $\Gamma_{\leq\psi}^+(\tau)$ do not contribute at all. The cones $\Gamma_{\leq\psi}^-(\tau)$ on the other hand contribute to the $\Star(0\times\R\lz)$-component of $q^*\idR\cdot\Gamma_\psi(A)$ with weight $-\varprod{\psi}{c}(\tau)$ on the cone $\Gamma^-_{\leq\psi}(\tau)/(0\times \R\lz)$. This cone is mapped injectively onto $\tau$ by the isomorphism $\Star(0\times\R\lz)\rightarrow\Sigma$ induced by $p$. Hence, the total contribution of the cones considered so far is $-\varprod{\psi}{A}$. 

Now let $\sigma'\in\Sigma'_{(k)}$. Depending on whether $\psi$ is non-negative or non-positive on $\sigma'$, the cone $\Gamma_\psi(\sigma')$ can have contributions only in components corresponding to cones in $|\Sigma|\times\R\gz$ or $|\Sigma|\times\R\lz$. Without loss of generality assume $\psi$ is non-negative on $\sigma'$. For every $\tau\in\Sigma$ we have $\Gamma_\psi(\sigma')\cap(\tau\times\R\gz)=\Gamma_\psi(\sigma'\cap\tau)$. In particular, this is one dimensional with relative interior contained in $\relint(\tau\times\R\gz)$ if and only if $\rho\coloneqq\sigma'\cap\tau$ is one dimensional, intersects the relative interior of $\tau$, and $\psi|_\rho$ is nonzero. In this case let $\delta\in\Sigma$ be a cone containing $\sigma'$, and denote by $\psi_\delta\in M^\delta$ the linear function defining $\psi|_\delta$. We have the equality
\begin{equation*}
\Gamma_{\psi_\delta}(N^{\sigma'})+ (N^\tau\times\Z)=(N^{\sigma'}+N^\tau)\times\Z
\end{equation*}
of sublattices of $N^{\delta\times\R\gz}=N^\delta\times\Z$. Therefore, $\Gamma_\psi(\sigma')$ has a contribution in the $\Star(\tau\times\R\gz)$-component of $q^*\idR\cdot \Gamma_\psi(A)$ if and only $\sigma'$ has a contribution in the $\Star(\tau)$-component of $\psi\cdot A$. To see that they even contribute with the same weight we first notice the equality
\begin{equation*}
\ind{N^{\Gamma_\psi(\sigma')}+N^{\tau\times\R\gz}}=\ind{\Gamma_{\psi_\delta}(N^{\sigma'})+(N^\tau\times\Z)}=\ind{N^\tau+N^{\sigma'}}
\end{equation*}
of indices. Since the primitive generator $u_{\Gamma_\psi(\rho)}$ of $\Gamma_\psi(\sigma')\cap(\tau\times\R\gz)$ is equal to the image $\Gamma_\psi(u_\rho)$ of the primitive generator $u_\rho$ of $\rho$, we also see that
\begin{equation*}
q^*\idR(u_{\Gamma_\psi(\rho)})=q(u_\rho,\psi(u_\rho))=\psi(u_\rho).
\end{equation*}
Finally, the weight of $\Gamma_\psi(A)$ on $\Gamma_\psi(\sigma')$ is equal to that of $A$ on $\sigma'$ by definition. Hence, the contributions are equal. Combining this with the fact that  $(\Gamma_\psi(\sigma')+\tau\times\R\gz)/(\tau\times\R\gz)$ is mapped injectively onto $(\sigma'+\tau)/\tau$ by the isomorphism $\Star(\tau\times\R\gz)\rightarrow \Star(\tau)$ induced by $p$,
 we conclude that the total contribution of the cones of the form $\Gamma_\psi(\sigma')$ for $\sigma'\in\Sigma'_{(k)}$ to $p_*\big( q^*\idR\cdot\Gamma_\psi(A)\big)$ is $\psi\cdot A$, and with this we have proven the desired equality.
\end{proof}

\begin{prop}
\label{ITprop:cup product passes to rational equivalence}
Let $\Sigma$ be a weakly embedded cone complex. Then there is a well-defined bilinear map
\begin{equation*}
\ClTLT(\Sigma)\times A_*(\overline\Sigma)\rightarrow A_*(\overline\Sigma),\quad (\overline\psi,[A])\mapsto [\varprod{\psi}{A}].
\end{equation*}
By abuse of notation we denote this paring by ``$\cdot$''.
\end{prop}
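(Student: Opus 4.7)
Well-definedness on the divisor side is already built into Construction~\ref{ITconstr:Cup product}: the cup-product $\varprod{\psi}{A}$ depends only on the linear equivalence class of $\psi$, and bilinearity of the resulting pairing on $\ClTLT(\Sigma)\times Z_*(\overline\Sigma)$ is noted there. Thus the only thing to prove is that cupping with a fixed $\overline\psi\in\ClTLT(\Sigma)$ sends $R_k(\overline\Sigma)$ to $R_{k-1}(\overline\Sigma)$. By linearity it suffices to check this on a generator, that is, on a cycle of the form $\alpha=p_*(q^*\idR\cdot A)$ with $\sigma\in\Sigma$ and $A\in Z_{k+1}(\Star(\sigma\times 0))$, where $p$ and $q$ are the projections from $\Sigma\times\tR^1$. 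Via the canonical identification $\Star(\sigma\times 0)\cong \Star(\sigma)\times\tR^1$ of Remark~\ref{ITrem: Rational Equivalence is well-def}, we may work throughout inside the weakly embedded cone complex $\Star(\sigma)\times\tR^1$, so that $q^*\idR$ and all push-forwards appearing below are defined in the strict sense.

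The core of the argument is a two-step rewriting of $\varprod{\overline\psi}{\alpha}$. First, write $i\colon\overline{\Star(\sigma)}\to\overline\Sigma$ for the inclusion; since $\alpha\in Z_k(\Star(\sigma))$, the defining equality $\varprod{\overline\psi}{\alpha}=\varprod{i^*\overline\psi}{\alpha}$ lets us replace $\overline\psi$ by $i^*\overline\psi\in\ClTLT(\Star(\sigma))$. Now apply the projection formula for cup-products (Proposition~\ref{ITprop:Projection formula}b) to the dominant morphism $p\colon\overline{\Star(\sigma)\times\tR^1}\to\overline{\Star(\sigma)}$ and the cycle $q^*\idR\cdot A\in Z_k(\Star(\sigma)\times\tR^1)$:
\begin{equation*}
\varprod{i^*\overline\psi}{p_*(q^*\idR\cdot A)}
=p_*\bigl(\varprod{p^*i^*\overline\psi}{(q^*\idR\cdot A)}\bigr).
\end{equation*}
Second, since $A$ lies in the finite part $Z_{k+1}(\Star(\sigma)\times\tR^1)$, we may apply Proposition~\ref{ITprop:cup and cap product commute}b on the complex $\Star(\sigma)\times\tR^1$ with $\chi=q^*\idR$ and divisor class $p^*i^*\overline\psi$, obtaining
\begin{equation*}
\varprod{p^*i^*\overline\psi}{(q^*\idR\cdot A)}=q^*\idR\cdot\bigl(\varprod{p^*i^*\overline\psi}{A}\bigr).
\end{equation*}
Setting $B\coloneqq\varprod{p^*i^*\overline\psi}{A}\in Z_k(\Star(\sigma)\times\tR^1)=Z_k(\Star(\sigma\times 0))$, we conclude that $\varprod{\overline\psi}{\alpha}=p_*(q^*\idR\cdot B)$, which is by definition a generator of $R_{k-1}(\overline\Sigma)$.

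There is little genuine obstacle here beyond bookkeeping: the two substantial inputs, namely the projection formula and the commutation of cup with intersection products, have already been established, and the definition of $R_*(\overline\Sigma)$ via Remark~\ref{ITrem: Rational Equivalence is well-def} is tailored to make the output $p_*(q^*\idR\cdot B)$ land automatically in the correct subgroup. The only point requiring a touch of care is keeping the distinction between cycles in the finite part and cycles in the boundary: one must invoke Construction~\ref{ITconstr:Cup product} to reduce the cup-product on $\Star(\sigma)$ to one involving the restricted class $i^*\overline\psi$, and only then is one in a position to apply the projection formula to the genuinely dominant map $p$.
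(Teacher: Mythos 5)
Your proof is correct and follows essentially the same route as the paper: reduce to a generator $p_*(q^*\idR\cdot A)$ of $R_*(\overline\Sigma)$ and rewrite the cup-product using the projection formula (Proposition \ref{ITprop:Projection formula}b) followed by the commutation of cup- and ``$\cdot$''-products (Proposition \ref{ITprop:cup and cap product commute}b). The only (harmless) difference is that you treat a general boundary generator explicitly by working on $\Star(\sigma)\times\tR^1$ with the restricted class $i^*\overline\psi$, whereas the paper tacitly reduces to generators with $B\in Z_*(\Sigma\times\tR^1)$ via Remark \ref{ITrem: Rational Equivalence is well-def} and performs the identical two-step computation there.
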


\begin{proof}
We need to show that $\psi\cup A=0$ in $A_*(\overline\Sigma)$ for all $\psi\in\TLT(\Sigma)$ and $A\in R_*(\overline\Sigma)$. It suffices to show this for $A=p_*(q^*\idR\cdot B)$ for some $B\in Z_*(\Sigma\times\tR^1)$, where $p$ and $q$ denote the projections to the first and second coordinate. In this case we have 
\begin{equation*}
\varprod{\psi}{A}= p_*\big(\varprod{p^*\psi}{(q^*\idR\cdot A)}\big)=
p_*\big(q^*\idR\cdot(\varprod{p^*\psi} A)\big)\in R_*(\overline\Sigma),
\end{equation*}
where the first equality uses the projection formula, and the second one uses Proposition \ref{ITprop:cup and cap product commute}.
\end{proof}

\begin{rem}
Recall that the intersection product $\psi\cdot A$ of Construction \ref{ITconstr:intersection products with tropical Cartier divisors} is only defined when $A$ is contained in the finite part of $\overline\Sigma$. That the bilinear pairing of the preceding proposition is its appropriate extension is justified by Proposition \ref{ITprop:cup and intersection product equal modulo ration equivalence}, which states that modulo rational equivalence the cup-product and the ``$\cdot$''-product are essentially the same.
\end{rem}

\begin{defn}
\label{ITdefn:iterated intersections}
Let $\Sigma$ be a weakly embedded cone complex, let $\psi_1,\dotsc,\psi_k\in\ClTLT(\Sigma)$, and let $A\in A_*(\overline\Sigma)$. Then we  define the cycle class
\begin{equation*}
\psi_1\dotsm \psi_k\cdot A =\prod_{i=1}^k\psi_i \cdot A 
\end{equation*}
inductively by $\prod_{i=1}^k\psi_i \cdot A = \psi_1\cdot\left(\prod_{i=2}^k\psi_i \cdot A\right)$, where the base case is the pairing of Proposition \ref{ITprop:cup product passes to rational equivalence}.
\end{defn}

\section{Tropicalization}
\label{RATsec}

In this section we will tropicalize cocycles and cycles on toroidal embeddings. As already mentioned in Remark \ref{ITrem:Analogy between tropical and algebraic IT}, cocycles will define Minkowski weights, whereas cycles will define tropical cycles on the associated weakly embedded extended cone complex. Afterwards, we will show that tropicalization respects intersection theoretical constructions like push-forwards, intersections with Cartier divisors supported on the boundary, and rational equivalence. For the rest of this paper we assume that all algebraic varieties are defined over an algebraically closed field $k$ of characteristic $0$.

\subsection{The associated Minkowski Weight of a Cocycle}

In the well-known paper \cite{FS97}, cocycles on complete toric varieties are described by Minkowski weights on the associated fans by recording the degrees of the intersections with the boundary strata. The same method yields Minkowski weights associated to cocycles on complete toroidal embeddings:

\begin{prop}
\label{RATprop:tropicalization of cocycle is balanced}
Let $X$ be a complete $n$-dimensional toroidal embedding, and let $c\in A^k(X)$ be a cocycle. Then the $(n-k)$-dimensional weight 
\begin{equation*}
\omega\colon\Sigma(X)_{(n-k)} \rightarrow \Z,\;\;  \sigma\mapsto \deg(c\cap [\cOrb(\sigma)])
\end{equation*}
satisfies the balancing condition.
\end{prop}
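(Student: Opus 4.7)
The plan is to verify the balancing identity $\sum_i \omega(\sigma_i) u_{\sigma_i/\tau} = 0$ in $N^X/N^X_\tau$ by pairing with an arbitrary element of the dual lattice. Since $N^X_\tau = N^X \cap \Lin{\phi_X(\tau)}$ is saturated in $N^X$, the quotient is torsion-free and its dual is $M^X \cap (N^X_\tau)^\perp$. Every such $m$ is represented by some $f \in \Gamma(X_0, \mathcal O_X^*)$ whose divisor $\divv(f)$ restricts trivially to the combinatorial open $X(\tau)$; equivalently, $f$ is invertible on $X(\tau)$. Consequently $f$ restricts to a rational function $f|_Y$ on the complete $(k+1)$-dimensional toroidal embedding $Y \coloneqq \cOrb(\tau)$ that is invertible on the open stratum $\Orb(\tau) \subseteq Y$, so the principal divisor $\divv(f|_Y)$ is supported on the boundary of the toroidal structure on $Y$.

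By Lemma \ref{ITlem: complex of stratum}, the identification $\Sigma(Y) \cong \Star_{\Sigma(X)}(\tau)$ is one of weakly embedded cone complexes, and the boundary components of $Y$ are exactly the $\cOrb(\sigma_i)$ associated to the rays $\sigma_i/\tau$ of the star. Writing
\begin{equation*}
\divv(f|_Y) = \sum_i a_i \cOrb(\sigma_i), \qquad a_i \coloneqq \ord_{\cOrb(\sigma_i)}(f|_Y),
\end{equation*}
the crucial identification is $a_i = \langle m, u_{\sigma_i/\tau}\rangle$. This follows by combining the formula $\langle D, u_\rho\rangle = a_\rho$ of \cite[p.~63]{KKMSD73} (applied inside $\Sigma(Y)$) with the final assertion of Lemma \ref{ITlem: complex of stratum}: the restriction isomorphism $M^{\Orb(\sigma_i)}(X) \cap (\sigma_X^\tau)^\perp \xrightarrow{\cong} M^{\Orb(\sigma_i)}(Y)$ is compatible with the weak embeddings and sends $\divv(f)|_{X(\sigma_i)}$ to $\divv(f|_Y)|_{Y(\sigma_i)}$.

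To conclude, let $\iota \colon Y \hookrightarrow X$ be the inclusion, so that $\iota^* c \cap [Y] \in A_1(Y)$. Since $\divv(f|_Y)$ is principal, its action on this class is zero in $A_0(Y)$, hence has degree zero on the complete variety $Y$. Using $\divv(f|_Y) \cdot [Y] = \sum_i a_i [\cOrb(\sigma_i)]$, commuting the operational class $\iota^*c$ past the Cartier divisor action, and applying the projection formula for each closed embedding $\cOrb(\sigma_i) \hookrightarrow X$, we obtain
\begin{equation*}
0 = \sum_i a_i \deg\bigl(c \cap [\cOrb(\sigma_i)]\bigr) = \sum_i \langle m, u_{\sigma_i/\tau}\rangle\, \omega(\sigma_i).
\end{equation*}
Since $m$ was arbitrary in the dual lattice, this proves balancing. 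The main technical obstacle is the coefficient identification $a_i = \langle m, u_{\sigma_i/\tau}\rangle$: while conceptually forced by the definition of the weak embedding, it requires carefully chasing the diagrams in Lemma \ref{ITlem: complex of stratum} and verifying that restriction of divisors to closed strata is indeed dual to the quotient maps of lattice normal vectors.
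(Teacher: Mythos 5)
Your proposal is correct and follows essentially the same route as the paper's proof: pair the balancing sum with a dual element represented by a rational function invertible on $X(\tau)$, identify the coefficients $\langle m,u_{\sigma_i/\tau}\rangle$ with the multiplicities of $\divv(f|_{\cOrb(\tau)})$ along the $\cOrb(\sigma_i)$ via Lemma \ref{ITlem: complex of stratum} and the formula of \cite[p.~63]{KKMSD73}, and conclude by capping $c$ with this principal divisor class and taking degrees (the paper does this directly on $X$, while you route it through $\iota^*c$ and the projection formula, which amounts to the same computation).
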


\begin{proof}
Let $\tau$ be an $(n-k-1)$-dimensional cone of $\Sigma(X)$, and let $\sigma_1,\dots,\sigma_n$ be the $(n-k)$-dimensional cones containing it. To prove that the balancing condition is fulfilled at $\tau$, it suffices to show that
\begin{equation*}
\sum_{i=1}^n \omega(\sigma_i) \langle f, u_{\sigma_i/\tau}\rangle =\langle f, \sum_{i=1}^n \omega(\sigma_i)u_{\sigma_i/\tau} \rangle = 0
\end{equation*}
for every $f\in (N^X/N^X_\tau)^*=(N^X_\tau)^\perp$. A rational function $f\in M^X$ is contained in  $(N^X_\tau)^\perp$ if and only if it is an invertible regular function on $X(\tau)$.  Therefore, for every $f\in (N^X_\tau)^\perp$ we have $\divv(f)|_{X(\sigma_i)}\in M^{\sigma_i}\cap\tau^\perp$. The lattice $M^{\sigma_i}\cap\tau^\perp$ is the group of integral linear functions of the ray $\sigma_i/\tau$, and $u_{\sigma_i/\tau}$ is by definition the image of the primitive generator of $\sigma_i/\tau$ in $N^X/N^X_\tau$. It follows that $\langle f, u_{\sigma_i/\tau}\rangle$ is the pairing of $\divv(f)|_{X(\sigma_i)}$ with the primitive generator of $\sigma_i/\tau$. By Lemma \ref{ITlem: complex of stratum} this is equal to the paring of $\divv(f)|_{\cOrb(\tau)\cap X(\sigma_i)}$ with the primitive generator of the ray $\sigma_{\cOrb(\tau)}^{\Orb(\sigma_i)}$, which in turn is equal to the multiplicity of $\divv(f)|_{\cOrb(\tau)}$ at $\cOrb(\sigma_i)$. Using this we obtain
\begin{multline*}
\sum_{i=1}^n \omega(\sigma_i) \langle f, u_{\sigma_i/\tau}\rangle
=\sum_{i=1}^n \deg\big(c\cap [\cOrb(\sigma_i)]\big)\langle f, u_{\sigma_i/\tau}\rangle=\\
=\deg\left(c\cap \sum_{i=1}^n\langle f,u_{\sigma_i/\tau}\rangle[\cOrb(\sigma_i)]\right)
=\deg\left(c\cap\big[\divv(f)|_{\cOrb(\sigma_i)}\big]\right)=0,
\end{multline*}
which finishes the proof.
\end{proof}

\begin{defn}
Let $X$ be a complete $n$-dimensional toroidal embedding with weakly embedded cone complex $\Sigma$. We define the tropicalization map
\begin{equation*}
\Trop_X\colon A^k(X)\rightarrow \Mink_{n-k}(\Sigma),
\end{equation*}
by sending a cocycle on $X$ to the weight defined as in the preceding proposition. This is clearly a morphism of abelian groups. For a cone $\sigma\in\Sigma$, and a cocycle $c\in A^k(\cOrb(\sigma))$, we define $\Trop_X(c)\in \Mink_{n-\dim(\sigma)-k}(\Star_\Sigma(\sigma))$ as the pushforward to $\Mink_*(\Star_\Sigma(\sigma))$ of the Minkowski weight $\Trop_{\cOrb(\sigma)}(c)\in \Mink_*(\Sigma(\cOrb(\sigma)))$. We will just write $\Trop(c)$ when no confusion arises.
\end{defn}

\begin{prop}
\label{RATprop:tropicalization of pullback of cocycle to boundary}
Let $X$ be a complete toroidal embedding with weakly embedded cone complex $\Sigma$, and let $\tau\in\Sigma$. Furthermore, let $c\in A^*(X)$ be a cocycle on $X$, and let $i\colon\cOrb(\tau)\rightarrow X$ and $j\colon\overline{\Star(\tau)}\rightarrow\overline\Sigma$ be the inclusion maps. Then we have the equality
\begin{equation*}
\Trop_X(i^*c)=j^*\Trop_X(c)
\end{equation*}
of Minkowski weights on $\Star(\tau)$.
\end{prop}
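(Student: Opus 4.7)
The plan is to unfold both sides of the claimed equality as explicit weights on the cones of $\Star(\tau)$ and match them via the projection formula for the closed immersion $i$.

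First I would fix a cone $\sigma/\tau$ of $\Star(\tau)$ of dimension $n-\dim(\tau)-k$, coming from $\sigma\in\Sigma$ with $\tau\preceq\sigma$ and $\dim\sigma=n-k$. By Construction \ref{ITconstr:pulling back Minkowski weights}, the left-hand side at $\sigma/\tau$ is simply
\begin{equation*}
j^*\Trop_X(c)(\sigma/\tau)=\Trop_X(c)(\sigma)=\deg\!\big(c\cap[\cOrb_X(\sigma)]\big),
\end{equation*}
so the task reduces to identifying the right-hand side with the same number.

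Next I would unravel $\Trop_X(i^*c)$ using Lemma \ref{ITlem: complex of stratum}: it identifies $\Sigma(\cOrb(\tau))$ with $\Star_{\Sigma(X)}(\tau)$ as weakly embedded cone complexes so that, for every $\sigma\succeq\tau$ in $\Sigma(X)$, the cone $\sigma/\tau$ corresponds to the cone $\sigma_{\cOrb(\tau)}^{\Orb_X(\sigma)}$, and under this correspondence the stratum $\cOrb_{\cOrb(\tau)}(\sigma/\tau)$ of $\cOrb(\tau)$ is literally the stratum $\cOrb_X(\sigma)$. Moreover, because the map of cone complexes induced by the lemma is the identity on each cone $\sigma/\tau$ (with identical cone lattices $N^{\sigma/\tau}$), the push-forward defining $\Trop_X(i^*c)$ on $\Star(\tau)$ involves only lattice indices equal to $1$, and hence simply transports the weight. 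Therefore
\begin{equation*}
\Trop_X(i^*c)(\sigma/\tau)=\Trop_{\cOrb(\tau)}(i^*c)(\sigma/\tau)=\deg\!\big(i^*c\cap[\cOrb_X(\sigma)]\big),
\end{equation*}
where on the right $[\cOrb_X(\sigma)]$ is regarded as a cycle on $\cOrb(\tau)$.

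The final step is the projection formula: since $i\colon \cOrb(\tau)\hookrightarrow X$ is proper and $i_*[\cOrb_X(\sigma)]=[\cOrb_X(\sigma)]$ in $X$ (the stratum closure is the same subvariety of $X$), we obtain
\begin{equation*}
\deg\!\big(i^*c\cap[\cOrb_X(\sigma)]\big)=\deg\!\big(i_*(i^*c\cap[\cOrb_X(\sigma)])\big)=\deg\!\big(c\cap[\cOrb_X(\sigma)]\big),
\end{equation*}
which matches the left-hand side and finishes the proof. The main obstacle is the bookkeeping in the second step: one has to check carefully that the isomorphism of Lemma \ref{ITlem: complex of stratum} really matches strata of $\cOrb(\tau)$ with the corresponding strata of $X$ (so that the two cycle classes agree) and that the induced lattice map on each cone is an equality, so that the push-forward through the identification acts trivially on weights. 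Once these compatibilities are in place, everything else is a one-line application of the projection formula.
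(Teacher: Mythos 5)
Your proof is correct and follows essentially the same route as the paper: identify the strata via Lemma \ref{ITlem: complex of stratum} (so that $\cOrb(\sigma/\tau)=\cOrb(\sigma)$ and the identification of cone complexes transports weights trivially), apply the projection formula for the closed immersion $i$, and compare with the weight of $j^*\Trop_X(c)$ via Construction \ref{ITconstr:pulling back Minkowski weights}. Your write-up just spells out the bookkeeping that the paper's one-paragraph proof leaves implicit.
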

\begin{proof}
By Lemma \ref{ITlem: complex of stratum} we have $\Orb(\sigma/\tau)=\Orb(\sigma)$, hence $\cOrb(\sigma/\tau)=\cOrb(\sigma)$. The projection formula implies that the weight of $\Trop(i^*c)$ at $\sigma/\tau$ is equal to the weight of $\Trop(c)$ at $\sigma$, which is equal to the weight of $j^*\Trop(c)$ at $\sigma/\tau$ by Construction \ref{ITconstr:pulling back Minkowski weights}.
\end{proof}

\begin{rem}
If the canonical morphism $\Sigma(\cOrb(\tau))\rightarrow \Star(\tau)$ is an isomorphism of weakly embedded cone complexes, the statement of Proposition \ref{RATprop:tropicalization of pullback of cocycle to boundary} reads
\begin{equation*}
\Trop(i^*c)=\Trop(i)^*\Trop(c).
\end{equation*} 
However, in general there may be more invertible regular functions on $\Orb(\tau)$ than obtained as restrictions of rational functions in $(N_\tau^X)^\perp\subseteq M^X$. In this case, the pullback of a Minkowski weight on $\Sigma$ to $\Star(\tau)$ may not be a Minkowski weight on $\Sigma(\cOrb(\tau))$. In other words, the pullback 
$\Trop(i)^*\colon\Mink_*(\Sigma(X))\rightarrow \Mink_*(\Sigma(\cOrb(\tau)))$ may be ill-defined. An example where this happens is when $X$ is equal to the blowup of $\Pj^2$ in the singular point of a plane nodal cubic $C$, and its boundary is the union of the exceptional divisor $E$ and the strict transform $\widetilde C$. The cone complex of $X$ consists of two rays $\rho_{\widetilde C}$ and $\rho_E$, corresponding to the two boundary divisors, which span two distinct strictly simplicial cones, one for each point in $E\cap \widetilde C$. The lattice $N^X$ is trivial, so that any two integers on the $2$-dimensional cones define a Minkowski weight in $\Mink_2(\Sigma(X))$. However, the pullback of such a weight to $\Star_{\Sigma(X)}(\rho_E)$ defines a Minkowski weight in $\Mink_1(\cOrb(\rho_E))$ only if the weights on its cones are equal. This is because the toroidal embedding $\Orb(\rho_E)\subseteq \cOrb(\rho_E)= E$ is isomorphic to $\Pj^1$ with two points in the boundary, the weakly embedded cone complex of which consists of two rays embedded into $\R$.
\end{rem}

\subsection{Tropicalizations of Cycles on Toroidal Embeddings}

Let $X$ be a toroidal embedding with weakly embedded cone complex $\Sigma$. In \cite{Uli13} Martin Ulirsch constructs a map $\trop^{\an}_X\colon X^\beth\rightarrow\overline\Sigma$ as a special case of his tropicalization procedure for fine and saturated logarithmic schemes. We recall that, since $X$ is separated, $X^\beth$ is the analytic domain of the Berkovich analytification $X^{\an}$ \cite{Berk90} consisting of all points which can be represented by an $R$-integral point for some rank one valuation ring $R$ extending $k$. The notation is due to Thuillier \cite{Thu07}. The map $\trop_X^{\an}$ restricts to a map $X_0^{\an}\cap X^\beth\rightarrow\Sigma$ generalizing the "$\ord$"-map $X_0\big(k\big(\!(t)\!\big)\!\big)\cap X\big(k[\mspace{-2mu}[t]\mspace{-2mu}]\big)\rightarrow\Sigma$ from \cite{KKMSD73}. Ulirsch's tropicalization map is also compatible with Thuillier's retraction map $\mathbf p_X\colon X^\beth\rightarrow X^\beth$ \cite{Thu07} in the sense that the retraction factors through $\trop^{\an}_X$. The tropicalization map allows to define the set-theoretic tropicalization of a subvariety $Z$ of $X_0$ as the subset $\trop_X(Z)\coloneqq\trop^{\an}_X(Z^{\an}\cap X^\beth)$ of $\Sigma$. This set has been studied in greater detail in \cite{Uli15}, where several parallels to tropicalizations of subvarieties of tori are exposed: first of all, $\trop_X(Z)$ can be given the structure of an at most $\dim(Z)$-dimensional cone complex whose position in $\Sigma$ reflects the position of $Z$ in $X$. Namely, there is a toroidal version of Tevelev's Lemma \cite[Lemma 2.2]{Tev07} stating that $\trop_X(Z)$ intersects the relative interior of a cone $\sigma\in\Sigma$ if and only if $\overline Z$ intersects $\Orb(\sigma)$. Furthermore, if $\trop_X(Z)$ is a union of cones of $\Sigma$, then $\overline Z$ intersects all strata properly. When $X$ is complete, we use this to give $\trop_X(Z)$ the structure of a tropical cycle in a similar way as done in \cite{ST08} in the toric case. First, we choose a simplicial proper subdivision $\Sigma'$ of $\Sigma$ such that $\trop_X(Z)$ is a union of its cones. Since $\charak k=0$ by assumption and $\Sigma'$ is simplicial, the toroidal modification $X'=X'\times_\Sigma\Sigma'$ is the coarse moduli space of a smooth Deligne-Mumford stack \cite[Thm. 3.3]{Iwan09} and thus has an intersection product on its Chow group $A_*(X)_\Q$ with rational coefficients \cite{Vistoli}. 

\begin{defn}
Let $d=\dim(Z)$ and $\sigma\in \Sigma'_{(d)}$. We define the multiplicity  $\mult_Z(\sigma)$ of the cone $\sigma$ by $\deg([\overline Z']\cdot [\cOrb(\sigma)])$, where $\overline Z'$ denotes the closure of $Z$ in $X'$.
\end{defn}

\begin{rem}
The multiplicity $\mult_Z(\sigma)$ is independent of the rest of the cone complex $\Sigma'$. This is because $\overline Z'$ intersects all strata properly so that $[\overline Z']\cdot[\cOrb(\sigma)]$ is a well-defined $0$-cycle supported on $\overline Z'\cap \Orb(\sigma)$ and only depending on $X'(\sigma)$.
\end{rem}

\begin{prop}
\label{RATprop:multiplicities are well-def}
let $\Delta$ be a simplicial proper subdivision of $\Sigma'$, and let $\delta\in \Delta_{(d)}$ be a $d$-dimensional cone contained in $\sigma\in\Sigma'_{(d)}$. Then $\mult_Z(\delta)=\mult_Z(\sigma)$. In particular, we have $\mult_Z(\sigma)\in\Z$ for all $\sigma\in\Sigma'$. Furthermore, the weight 
\begin{equation*}
\Sigma'_{(d)}\rightarrow\Z,\;\;\sigma\mapsto \mult_Z(\sigma)
\end{equation*}
is balanced and the tropical cycle associated to it is independent of the choice of $\Sigma'$.
\end{prop}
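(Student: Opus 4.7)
The plan is to establish (1)---that $\mult_Z(\delta)=\mult_Z(\sigma)$ whenever $\delta\subseteq\sigma$ are both $d$-dimensional---and then deduce (2)--(4) as consequences. For (1), let $g\colon X''=X\times_\Sigma\Delta\to X'$ be the toroidal modification induced by the refinement $\Delta\to\Sigma'$. Since $\delta\subseteq\sigma$ have the same dimension $d$, the saturated sublattices $N^\delta,N^\sigma$ of $N^X$ coincide, so $g|_{\Orb(\delta)}\colon\Orb(\delta)\to\Orb(\sigma)$ is an isomorphism of tori; the restriction $g|_{\cOrb(\delta)}\colon\cOrb(\delta)\to\cOrb(\sigma)$ is then a proper birational morphism, and by Lemma~\ref{ITlem: complex of stratum} it is itself the toroidal modification associated to the subdivision $\Star_\Delta(\delta)\to\Star_{\Sigma'}(\sigma)$. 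In particular $g_*[\cOrb(\delta)]=[\cOrb(\sigma)]$ and $g_*[\overline Z'']=[\overline Z']$.

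Both intersections in question are proper and $0$-dimensional (by Tevelev applied to the subdivisions compatible with $\trop_X(Z)$), so proving (1) amounts to comparing their degrees. My approach is to argue pointwise: for each $p\in\overline Z'\cap\Orb(\sigma)$, the isomorphism $g|_{\Orb(\delta)}$ yields a unique lift $p_\delta\in\Orb(\delta)$, and passing to compatible formal toric models of $X'$ at $p$ and $X''$ at $p_\delta$, the local intersection numbers agree by the standard toric fact that tropical multiplicities are invariant under refinement of the fan by cones of the same dimension (cf.\ \cite[Lem.~3.2]{ST08} or \cite{K09}). Summing over $p$ then yields $\mult_Z(\delta)=\mult_Z(\sigma)$. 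The hard part is precisely this local step---identifying $p_\delta$, confirming it lies on $\overline Z''$, and verifying that the toric-model comparison recovers the correct multiplicity with no extra contribution from the exceptional locus.

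The remaining assertions follow from (1) by passing to a non-singular simplicial proper refinement $\tilde\Sigma$ of $\Sigma'$, which exists in characteristic zero. On the smooth variety $\tilde X=X\times_\Sigma\tilde\Sigma$ all intersections are integer-valued, giving (2) via $\mult_Z(\sigma)=\mult_Z(\tilde\sigma)\in\Z$ for any $\tilde\sigma\in\tilde\Sigma_{(d)}$ lying in $\sigma$. For (3), the weight $\tilde\sigma\mapsto\mult_Z(\tilde\sigma)$ on $\tilde\Sigma$ is balanced by Proposition~\ref{RATprop:tropicalization of cocycle is balanced} applied to the cocycle Poincar\'e-dual to $[\overline{\tilde Z}]$; choosing $\tilde\tau\in\tilde\Sigma_{(d-1)}$ whose smallest $\Sigma'$-cone is a given $\tau\in\Sigma'_{(d-1)}$ and using (1) together with the identification $u_{\tilde\sigma/\tilde\tau}=u_{\sigma/\tau}$ (valid because $N^{\tilde\sigma}=N^\sigma$ and $N^{\tilde\tau}=N^\tau$ for the unique $\tilde\sigma\subseteq\sigma$ with $\tilde\tau\prec\tilde\sigma$), the balancing relation at $\tilde\tau$ translates directly to the desired balancing at $\tau$; cones $\tilde\sigma$ whose smallest containing $\Sigma'$-cone has dimension strictly greater than $d$ carry zero weight by Tevelev, so they do not contribute. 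For (4), two choices $\Sigma'_1,\Sigma'_2$ admit a common simplicial proper refinement on which (1) exhibits both resulting Minkowski weights as refinements of a single weight, so they define the same class in $Z_d(\Sigma)$.
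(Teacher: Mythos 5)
The gap is in your step (1), which is the entire content of the proposition. You reduce it to a pointwise claim --- that every $p\in\overline Z'\cap\Orb(\sigma)$ lifts to a point $p_\delta\in\overline Z''\cap\Orb(\delta)$ carrying the same local intersection number, and that nothing else contributes --- and you then label exactly this "the hard part" without proving it. Two concrete problems. First, the toric fact you invoke (invariance of tropical multiplicities under refinement, as in Sturmfels--Tevelev or Katz) is a \emph{global} statement about closures of subvarieties in complete toric varieties; it is precisely the toric case of the statement being proven, and it says nothing about formal germs, so it cannot simply be applied "in compatible formal toric models". To make it local you would need to know that the relevant intersection numbers can be computed and compared on completions; but on the simplicial (in general singular) modifications $X'$ and $X''$ the products $[\overline Z']\cdot[\cOrb(\sigma)]$ and $[\overline Z'']\cdot[\cOrb(\delta)]$ are only defined through the rational intersection theory on Alexander schemes (equivalently, on the smooth Deligne--Mumford stacks covering them), and no argument is given that their local multiplicities are formal-local invariants. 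Second, even granting such a local theory, the pointwise claim is strictly stronger than what is needed and is not obviously true: a priori $\overline Z''$ need not pass through the distinguished lift $p_\delta$, and intersection multiplicity can redistribute inside the fiber $g^{-1}(p)$, which meets the exceptional strata $\Orb(\rho)$ for the new cones $\rho$ interior to $\sigma$; ruling out such "escape to the exceptional locus" is exactly the issue, and your sketch does not address it.

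The paper closes this by a global argument that avoids any pointwise analysis: since $\Sigma'$ is simplicial and $\charak k=0$, the target $X'$ is the coarse space of a smooth DM stack, hence an Alexander scheme, so there is a pullback $g^*\colon A_*(X')_\Q\rightarrow A_*(X'')_\Q$; the class $g^*[\overline Z']$ is supported on $g^{-1}\overline Z'$, whose only $d$-dimensional component is $\overline Z''$, so $g^*[\overline Z']=c\,[\overline Z'']$, and $g_*g^*[\overline Z']=[\overline Z']$ forces $c=1$; then
\begin{equation*}
\deg([\overline Z'']\cdot[\cOrb(\delta)])=\deg\big(g_*(g^*[\overline Z']\cdot[\cOrb(\delta)])\big)=\deg([\overline Z']\cdot g_*[\cOrb(\delta)])=\deg([\overline Z']\cdot[\cOrb(\sigma)])
\end{equation*}
by the projection formula, using your (correct) observation that $g_*[\cOrb(\delta)]=[\cOrb(\sigma)]$. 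Note that the ingredients you do establish ($g_*[\overline Z'']=[\overline Z']$, $g_*[\cOrb(\delta)]=[\cOrb(\sigma)]$) only become effective once such a pullback is available; push-forward alone cannot compare the two degrees. Your remaining steps --- integrality via a strictly simplicial refinement, balancing via Poincar\'e duality and Proposition \ref{RATprop:tropicalization of cocycle is balanced} on the smooth modification together with the transfer of the balancing relation from $\tilde\tau$ to $\tau$, and independence via a common refinement --- agree with the paper's proof and are fine once (1) is in place.
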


\begin{proof}
To ease the notation, we replace $X$ and $X'$ by $X\times_\Sigma\Sigma'$ and $X\times_\Sigma\Delta$, respectively, and denote the closures of $Z$ in $X$ and $X'$ by $\overline Z$ and $\overline Z'$. Furthermore, we denote the toroidal modification induced by the subdivision $\Delta\rightarrow\Sigma'$ by  $f\colon X'\rightarrow X$. Since $X$ is an Alexander scheme in the sense of \cite{Vistoli}, there is a pullback morphism $f^*\colon A_*(X)_\Q\rightarrow A_*(X')_\Q$. The pullback $f^*[\overline Z]$ is represented by a cycle $[X']\cdot_f [\overline Z]\in A_d(f^{-1}\overline Z)_\Q$. Since $\overline Z$ intersects all strata properly, and $f$ locally looks like a toric morphism, the preimage $f^{-1}\overline Z$ is $d$-dimensional and $\overline Z'$ is its only $d$-dimensional component. Hence, $f^*[\overline Z]$ is a multiple of $[\overline Z']$. By the projection formula we have
\begin{equation*}
f_*f^*[\overline Z]=f_*(f^*[\overline Z]\cdot [X'])=[\overline Z]\cdot [X]=[\overline Z],
\end{equation*} 
showing that we, in fact, have $f^*[\overline Z]=[\overline Z']$. Again using the projection formula we obtain
\begin{multline*}
\mult_Z(\delta)=\deg([\overline Z']\cdot [\cOrb(\delta)])=\deg(f_*(f^*[\overline Z]\cdot[\cOrb(\delta)]))=\\
=\deg([\overline Z]\cdot f_*[\cOrb(\delta)])=\deg([\overline Z]\cdot [\cOrb(\sigma)])=\mult_Z(\sigma).
\end{multline*}
Since every complex has a strictly simplicial proper subdivision, we can choose $\Delta$ to be strictly simplicial. In this case, the multiplicity $\mult_Z(\delta)$ is defined by the ordinary intersection product on $A_*(X')$, and hence we have $\mult_Z(\sigma)=\mult_Z(\delta)\in\Z$. Similarly, to prove the balancing condition for the weight $\Sigma_{(d)}\ni\sigma\mapsto \mult_Z(\sigma)\in\Z$ it suffices to show that the induced weight on a strictly simplicial proper subdivision $\Delta$ is balanced. By what we just saw, this weight is equal to $\Delta_{(d)}\ni \delta\mapsto\mult_Z(\delta)\in\Z$. But this is nothing but the tropicalization of the cocycle corresponding to $[\overline Z']$ by Poincaré duality, which is balanced by Proposition \ref{RATprop:tropicalization of cocycle is balanced}. That the tropical cycle it defines is independent of all choices follows immediately from the fact that any two subdivisions have a common refinement.
\end{proof}

The previous result allows us to assign a tropical cycle to the subvariety $Z$ of $X_0$. Its support will be contained in the set-theoretic tropicalization of $Z$.

\begin{defn}
Let $X$ be a complete toroidal embedding with weakly embedded cone complex $\Sigma$, and let $Z\subseteq X_0$ be a $d$-dimensional subvariety. We define the \emph{tropicalization} $\Trop_X(Z)\in Z_d(\Sigma)$ as the tropical cycle represented by the tropicalization of the cocycle corresponding to the closure $\overline Z'$ of $Z$ in a smooth toroidal modification $X'$ of $X$ in which $\overline Z'$ intersects all strata properly. This is well-defined by Proposition \ref{RATprop:multiplicities are well-def}. Extending by linearity, we obtain a tropicalization morphism
\begin{equation*}
\Trop_X\colon Z_*(X)=\bigoplus_{\sigma\in\Sigma}Z_*(\Orb(\sigma))\xrightarrow{\bigoplus_{\sigma\in\Sigma}\Trop_{\cOrb(\sigma)}}
\bigoplus_{\sigma\in\Sigma}Z_*(\Star(\sigma))=Z_*(\overline\Sigma).
\end{equation*}
\end{defn}

\begin{rem}
Strictly speaking the "$\sigma$-th" coordinate of $\Trop_X$ is not $\Trop_{\smash{\cOrb(\sigma)}}$, but the composite $Z_*(\Orb(\sigma))\xrightarrow{\Trop_{\cOrb(\sigma)}}Z_*(\Sigma(\cOrb(\sigma)))\rightarrow Z_*(\Star(\sigma))$, where the second map is the push-forward induced by the identification of cone complexes of Lemma \ref{ITlem: complex of stratum}.
\end{rem}

\subsection{The Sturmfels-Tevelev Multiplicity Formula}
\label{RATsubsec:The Sturmfels-Tevelev multiplicity formula}

We now give a proof of the Sturmfels-Tevelev multiplicity formula \cite[Theorem 1.1]{ST08} in the toroidal setting. It has its origin in tropical implicitization and states that push-forward commutes with tropicalization. A version for the embedded case over fields with non-trivial valuation has been proven in \cite{BPR11}, \cite{OP13}, and \cite{Gub12}.

\begin{thm}
\label{RATthm:Push-forward and Tropicalization commute}
Let $f\colon X\rightarrow Y$ be a toroidal morphism of complete toroidal embeddings, and let $\alpha\in Z_*(X)$. Then 
\begin{equation*}
\Trop(f)_*\Trop_X(\alpha)= \Trop_Y(f_*\alpha).
\end{equation*}
\end{thm}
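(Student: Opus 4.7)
The plan is to reduce to the case of a single subvariety sitting in the open part and then compute multiplicities via the algebraic projection formula together with a local toric pull-back identity. By linearity and the direct-sum definition of $\Trop_X$, using the identifications of Lemma~\ref{ITlem: complex of stratum}, I first reduce to the case $\alpha=[Z]$ for an irreducible $d$-dimensional subvariety $Z\subseteq X_0$, after replacing $X$ and $Y$ by the closures of the strata containing $Z$ and $f(Z)$, respectively. Using the compatibility of toroidal modifications with toroidal morphisms \cite[Lemma~1.11]{AK00} and Lemma~\ref{ITlem:fine proper subdivisions exist}, I next choose simultaneous smooth toroidal modifications $\pi_X\colon X'\to X$ and $\pi_Y\colon Y'\to Y$ through which $f$ factors as $\pi_Y\circ f'$, where $f'$ is a dominant toroidal morphism sending each cone of $\Sigma(X')$ onto a cone of $\Sigma(Y')$; further, I arrange that the closure $\overline Z'$ of $Z$ in $X'$ meets every stratum of $X'$ properly, and similarly for the closure $\overline W'$ of $W:=f(Z)$ in $Y'$.

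If $f|_Z$ is not generically finite then $\dim W<d$, so $f_*[Z]=0$ and the right-hand side of the desired equality vanishes. On the left, the toroidal Tevelev lemma \cite{Uli15} places the support of $\Trop_X([Z])$ inside $\trop_X(Z)$, and $\Trop(f)$ maps this into $\trop_Y(W)$, a set of dimension strictly less than $d$. Hence no $d$-dimensional cone of $\Sigma(X')$ carrying non-zero weight can surject onto a $d$-dimensional cone of $\Sigma(Y')$, and the tropical push-forward vanishes by the defining formula in Construction~\ref{ITconstr:push-forwards}.

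In the remaining case $f|_Z$ is generically finite of some degree $e$, so $f_*[Z]=e\cdot[W]$ and (via the projection formula for $\pi_X$ and $\pi_Y$, as in the proof of Proposition~\ref{RATprop:multiplicities are well-def}) $f'_*[\overline Z']=e\cdot[\overline W']$. For each $\delta\in\Sigma(Y')_{(d)}$, the algebraic projection formula yields
\begin{align*}
e\cdot\mult_W(\delta)
 &= e\cdot\deg\bigl([\overline W']\cdot[\cOrb(\delta)]\bigr)
 = \deg\bigl(f'_*[\overline Z']\cdot[\cOrb(\delta)]\bigr) \\
 &= \deg\bigl([\overline Z']\cdot f'^*[\cOrb(\delta)]\bigr).
\end{align*}
The condition $\delta\preceq f'(\sigma)$ forces $\dim\sigma\geq\dim\delta=d$, while proper intersection of $\overline Z'$ with each $\cOrb(\sigma)$ forces $\overline Z'\cap\cOrb(\sigma)=\emptyset$ whenever $\dim\sigma>d$; so only strata $\cOrb(\sigma)$ with $\sigma\in\Sigma(X')_{(d)}$ and $f'(\sigma)=\delta$ can contribute to the expression above. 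Passing to the formal toric model at each such closed point reduces the computation of the coefficients to the classical smooth-toric pull-back identity
\begin{equation*}
f'^*[\cOrb(\delta)] = \sum_{\substack{\sigma\in\Sigma(X')_{(d)}\\ f'(\sigma)=\delta}} \bigl[N^\delta:f'(N^\sigma)\bigr]\,[\cOrb(\sigma)],
\end{equation*}
whence $e\cdot\mult_W(\delta) = \sum_{\sigma\mapsto\delta}[N^\delta:f'(N^\sigma)]\mult_Z(\sigma)$, which by Construction~\ref{ITconstr:push-forwards} is exactly the multiplicity of $\Trop(f')_*\Trop_{X'}([\overline Z'])$ at $\delta$, representing $\Trop(f)_*\Trop_X([Z])$. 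The main obstacle is the local pull-back identity itself: reducing to a smooth toric morphism through the formal chart makes it classical, but one genuinely needs the proper-intersection hypothesis to suppress the higher-codimensional strata of $f'^{-1}(\cOrb(\delta))$ that would otherwise force an excess-intersection correction.
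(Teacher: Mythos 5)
Your overall strategy is the same as the paper's (reduce to $Z\subseteq X_0$ with $f$ dominant, kill the dimension-dropping case via functoriality of the set-theoretic tropicalization, then compare multiplicities through the projection formula and a local toric computation of $f'^*[\cOrb(\delta)]$), but there is a genuine gap at the modification step. You assert that one can choose \emph{smooth} toroidal modifications of both $X$ and $Y$ such that every cone of $\Sigma(X')$ maps \emph{onto} a cone of $\Sigma(Y')$. Neither \cite[Lemma 1.11]{AK00} nor Lemma \ref{ITlem:fine proper subdivisions exist} gives this: subdividing $\Sigma'$ further to make it strictly simplicial introduces new rays whose images are typically interior rays of cones of $\Delta'$, destroying the onto-cones property, and restoring it forces one to subdivide $\Delta'$ and start over. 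Achieving ``both complexes unimodular and all cones mapping onto cones'' by subdivisions alone is a semistable-reduction-type statement that is not available from the tools cited; this is precisely why the paper settles for $\Delta'$ strictly simplicial and $\Sigma'$ merely simplicial (obtainable by subdividing along rays, \cite[Remark 4.5]{AK00}), at the price of using the $\Q$-valued intersection product on the simplicial modification $X'$, which was set up in Proposition \ref{RATprop:multiplicities are well-def} exactly for this purpose. Your argument can be repaired by dropping the smoothness of $X'$ and quoting that proposition, i.e.\ by reverting to the paper's setup.

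A second, related issue is the justification of the identity $f'^*[\cOrb(\delta)]=\sum_{\sigma\mapsto\delta}[N^\delta:f'(N^\sigma)][\cOrb(\sigma)]$. The correct mechanism is flatness of $f'$: once cones of $\Sigma'$ map onto cones of the strictly simplicial $\Delta'$, the morphism $f'$ is flat (\cite[Remark 4.6]{AK00}; equivalently, miracle flatness for the Cohen--Macaulay $X'$ over the smooth $Y'$), so $f'^{-1}\cOrb(\delta)$ has pure codimension $d$, its components are the $\cOrb(\sigma)$ for $d$-dimensional $\sigma$ mapping onto $\delta$, and the multiplicities are the lattice indices computed in local toric charts. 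In that situation there simply are no higher-codimensional components, so your proper-intersection argument is not what suppresses them; conversely, without equidimensionality the displayed cycle identity in $A_*(X')$ is not valid as stated, and properness of $\overline Z'\cap\cOrb(\sigma)$ does not by itself allow you to discard components of $f'^{-1}\cOrb(\delta)$ from a global class before pairing with $[\overline Z']$ --- one would need a refined Gysin/localized-class argument that your write-up does not set up. So: replace ``smooth $X'$'' by ``simplicial $X'$'' and replace the proper-intersection/excess discussion by the flatness of $f'$, and your proof becomes the paper's.
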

\begin{proof}
Let $\Sigma$ and $\Delta$ denote the weakly embedded cone complexes of $X$ and $Y$, respectively.
Since both sides of the equation are linear in $\alpha$, we may assume that $\alpha=[\overline Z]$, where $Z$ is a closed subvariety of $\Orb(\sigma)$ for some $\sigma\in\Sigma$, say of dimension $\dim(Z)=d$. The toroidal morphism $\cOrb(\sigma)\rightarrow X\rightarrow Y$ factors through a dominant toroidal morphism $\cOrb(\sigma)\rightarrow\cOrb(\delta)$ for some $\delta\in\Delta$, and since tropicalization and push-forward commute for closed immersions of closures of strata by definition, this allows us to reduce to the case where $f$ is dominant and $\Orb(\sigma)=X_0$. 

Assume that the dimension of $f(Z)$ is strictly smaller than that of $Z$.  Since $\Trop(f)(\trop_X(Z))$ is equal to $\trop_Y(f(Z))$ (this follows from the surjectivity of $Z^{\an}\rightarrow f(Z)^{\an}$ \cite[Prop.\ 3.4.6]{Berk90} and the functoriality of $\trop^{\an}$ \cite[Prop.\ 6.2]{Uli13}), this implies that $\Trop(f)$ is not injective on any facet of $\Trop_X(Z)$, hence 
\begin{equation*}
\Trop(f)_*\Trop_X([\overline Z])=0=\Trop_Y(0)=\Trop_Y(f_*[\overline Z]).
\end{equation*}
Now assume that $\dim(f(Z))=\dim(Z)=d$. We subdivide $\Sigma$ and $\Delta$ as follows. First, we take a proper subdivision $\Sigma'$ of $\Sigma$ such that $\trop_X(Z)$ is a union of cones of $\Sigma'$. Then we take a strictly simplicial proper subdivision $\Delta'$ of $\Delta$ such that the images of cones in $\Sigma'$ are unions of cones of $\Delta'$. Pulling back the cones of $\Delta'$, we obtain a proper subdivision $\Sigma''$ of $\Sigma'$ whose cones map to cones in $\Delta'$. By successively subdividing along rays (cf.\ \cite[Remark 4.5]{AK00}), we can achieve a proper subdivision of $\Sigma''$ whose cones are simplicial and mapped to cones of $\Delta$ by $\Trop(f)$. After renaming, we see that there are proper subdivisions $\Sigma'$ and $\Delta'$ of $\Sigma$ and $\Delta$, respectively, such that 
 $\Sigma'$ is simplicial, $\Delta'$ is strictly simplicial, $\Trop(f)$ maps cones of $\Sigma'$ onto cones of $\Delta'$, and $\trop_X(Z)$ is a union of cones in $\Sigma'$.

Let $X'=X\times_\Sigma\Sigma'$ and $Y'=Y\times_\Delta\Delta'$ the corresponding toroidal modifications. Then the induced toroidal morphism $f'\colon X'\rightarrow Y'$ is flat by \cite[Remark 4.6]{AK00}.
As $\trop_Y(f(Z))$ is the image of $\trop_X(Z)$, it is a union of cones of $\Delta'$. By construction of the tropicalization, the weight of $\Trop_Y(f_*[\overline Z])$ at a $d$-dimensional cone $\delta\in\Delta'$ is equal to
\begin{equation*}
[K(\overline Z):K(f(\overline Z))]\deg([f'(\overline Z)]\cdot [\cOrb(\delta)])=\deg(f'_*[\overline Z']\cdot[\cOrb(\delta)])=\deg([\overline Z']\cdot f'^*[\cOrb(\delta)]),
\end{equation*}
where the second equality uses the projection formula. 
The irreducible components of $f'^{-1}\cOrb(\delta)$ are of the form $\cOrb(\sigma)$, where $\sigma$ is minimal among the cones of $\Sigma'$ mapping onto $\delta$. All of these cones $\sigma$ are $d$-dimensional, and the multiplicity with which $\cOrb(\sigma)$ occurs in $f'^{-1}\cOrb(\delta)$ is $[N^\delta:\Trop(f)(N^\sigma)]$ as we see by comparing with the toric case using local toric charts. Combining this, we obtain
\begin{equation*}
\deg([\overline Z']\cdot f'^*[\cOrb(\delta)])= \sum_{\sigma\mapsto\delta}[N^\delta:\Trop(f)(N^\sigma)]\deg([\overline Z']\cdot [\cOrb(\sigma)]),
\end{equation*}
where the sum runs over all $d$-dimensional cones of $\Sigma'$ mapping onto $\delta$. Since $\deg([\overline Z']\cdot [\cOrb(\sigma)])$ is the weight of $\Trop_X(Z)$ at $\sigma$, the right hand side of this equation is precisely the multiplicity of $\Trop(f)_*\Trop_X(Z)$ at $\delta$.
\end{proof}

\subsection{Tropicalization and Intersections with Boundary Divisors}
\label{RATsubsec:Tropicalization and intersections with boundary divisors}

Let $X$ be a toroidal embedding with weakly embedded cone complex $\Sigma$. By definition of $\Sigma$, the restriction of a Cartier divisor $D$ on $X$ which is supported away from $X_0$ to a combinatorial open subset $X(\sigma)$ for some $\sigma\in\Sigma$ is determined by an integral linear function on $\sigma$. Since the cones of $\Sigma$ and the combinatorial open subsets of $X$ are glued accordingly, $D$ defines a continuous function $\psi\colon |\Sigma|\rightarrow \R$ which is integral linear on all cones of $\Sigma$. Conversely, all such functions define a Cartier divisor on $X$ which is supported away from $X_0$. When $X$ is not toric, the Picard groups of its combinatorial opens need not be trivial. Hence, the restriction $D|_{X(\sigma)}$ is not necessarily of the form $\divv(f)|_{X(\sigma)}$ for some $f\in K(X)$. But if it is, the rational function $f$ must be regular and invertible on $X_0$, and by definition of the weak embedding we then have $\psi|_\sigma=f\circ\phi_X$. We see that $D$ is principal on the combinatorial opens of $X$ if and only if $\psi$ is combinatorially principal. By the same argument we see that another such divisor $D'$ with corresponding tropical \tlt{}-divisor $\psi'$ is linearly equivalent to $D$ if and only if $\psi$ and $\psi'$ are linearly equivalent tropical \tlt{}-divisors on $\Sigma$.	

\begin{defn}
Let $X$ be a toroidal embedding with weakly embedded cone complex $\Sigma$. We write $\Bound$  for the subgroup of $\Div(X)$ consisting of Cartier divisors which are supported on $X\setminus X_0$. We denote the tropical Cartier divisor on $\Sigma$ corresponding to $D\in\Bound$ by $\Trop_X(D)$. If $D$ is principal on all combinatorial opens, or equivalently, if $\Trop_X(D)\in\TLT(\Sigma)$, we say that $D$ is combinatorially principal (\tlt{}). We write $\TLT(X)$ for the group of \tlt{}-divisors on $X$. Furthermore, we write $\ClTLT(X)$ for the image of $\TLT(X)$ in $\Pic(X)$, and for $\mathcal L\in\ClTLT(X)$ we write $\Trop_X(\mathcal L)$ for its corresponding tropical divisor class in $\ClTLT(\Sigma)$. If no confusion arises, we usually omit the reference to $X$.
\end{defn}

\begin{lem}
\label{RATlem: pullback of divisor to stratum}
Let $f\colon X\rightarrow Y$ be a morphism of toroidal embeddings, and let  $\mathcal L\in\ClTLT(Y)$. Then $f^*\mathcal L\in \ClTLT(X)$, and 
\begin{equation*}
\Trop_X(f^*\mathcal L)=\Trop(f)^*(\Trop_Y(\mathcal L)).
\end{equation*} 
\end{lem}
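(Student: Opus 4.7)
The plan is to reduce to the two building blocks of a toroidal morphism and match the algebraic and tropical pullback constructions cone by cone. By the definition of toroidal morphisms recalled in this section, we can factor $f$ as $X\xrightarrow{f'}\cOrb(\delta)\xhookrightarrow{i} Y$ with $f'$ dominant toroidal and $\delta\in\Sigma(Y)$. Accordingly, I would first treat the dominant case and the closed immersion case separately, and then combine them via the functoriality of pullback of divisor classes (both tropically, by Construction \ref{ITconstr:Pullbacks of Cartier divisors}, and algebraically).

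\textbf{Dominant case.} Assume $f\colon X\to Y$ is dominant toroidal, and represent $\mathcal L$ by $D\in\TLT(Y)$. Since $f$ is toroidal, $f^{-1}(Y\setminus Y_0)=X\setminus X_0$, so $f^*D\in\Bound$. For each $\sigma\in\Sigma(X)$ let $\tau\in\Sigma(Y)$ be minimal with $f(X(\sigma))\subseteq Y(\tau)$; then by the defining property of $\Trop(f)$ recalled in the discussion preceding Lemma \ref{ITlem: complex of stratum}, the restriction $\Trop(f)|_\sigma\colon\sigma\to\tau$ is dual to the pullback $M^\tau\to M^\sigma$ of Cartier divisors. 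On $X(\sigma)$ a local generator for $D|_{Y(\tau)}$ comes from some $g\in\Gamma(Y_0,\mathcal O_Y^*)$, and $f^*g$ lies in $\Gamma(X_0,\mathcal O_X^*)$, so $f^*D$ is \tlt{}. Moreover, the tropical \tlt{}-divisor $\Trop_X(f^*D)|_\sigma\in M^\sigma$ is precisely the pullback of $\Trop_Y(D)|_\tau\in M^\tau$, which is $\bigl(\Trop_Y(D)\circ\Trop(f)\bigr)|_\sigma$. This is exactly $\Trop(f)^*\Trop_Y(D)|_\sigma$ in the dominant case of Construction \ref{ITconstr:Pullbacks of Cartier divisors}. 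Hence the equality of tropical divisor classes holds on the nose, even before passing to linear equivalence.

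\textbf{Closed immersion case.} Now let $f=i\colon\cOrb(\delta)\hookrightarrow Y$ and let $\psi=\Trop_Y(D)\in\TLT(\Sigma(Y))$. On the tropical side, Construction \ref{ITconstr:Pullbacks of Cartier divisors} picks $\psi_\delta\in M^Y$ with $\psi_\delta\circ\phi_Y|_\delta=\psi|_\delta$ and sends $\psi$ to the class of $\overline{\psi'}$ where $\psi'=\psi-\psi_\delta\circ\phi_Y$. Algebraically, $\psi_\delta$ corresponds to an element $g\in\Gamma(Y_0,\mathcal O_Y^*)/k^*$, and the condition $\psi_\delta\circ\phi_Y|_\delta=\psi|_\delta$ translates, via the isomorphism $M^\delta\cong\{\text{Cartier divisors on }Y(\delta)\text{ supported on }Y(\delta)\setminus Y_0\}$, into $\divv(g)|_{Y(\delta)}=D|_{Y(\delta)}$. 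Hence $D'=D-\divv(g)$ vanishes on $Y(\delta)$, so $\cOrb(\delta)$ is not in the support of $D'$ and the algebraic pullback $i^*D:=D'|_{\cOrb(\delta)}$ is a well-defined representative of $i^*\mathcal L$, visibly supported on the toroidal boundary of $\cOrb(\delta)$. To verify it is \tlt{} and matches $\overline{\psi'}$, I compare them on every combinatorial open: for $\sigma\succeq\delta$ in $\Sigma(Y)$, the identification $\sigma^{\Orb(\sigma)}_{\cOrb(\delta)}\cong\sigma/\delta$ and the lattice identification $M^\sigma\cap\delta^\perp\cong M^{\sigma/\delta}_{\cOrb(\delta)}$ of Lemma \ref{ITlem: complex of stratum} send $D'|_{Y(\sigma)}\in M^\sigma\cap\delta^\perp$ to $(i^*D)|_{\cOrb(\delta)(\sigma/\delta)}$; on the other hand, $\psi'|_\sigma\in M^\sigma$ vanishes on $\delta$ by construction and descends to $\overline{\psi'}|_{\sigma/\delta}$, and these two procedures are clearly compatible. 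In particular $i^*D$ is \tlt{} and $\Trop_{\cOrb(\delta)}(i^*D)=\overline{\psi'}$. Different choices of $g$ differ by a unit on all of $Y_0$, hence produce linearly equivalent algebraic divisors, so the construction descends to $\ClTLT$ as required.

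\textbf{Combining the two cases.} Composing gives $f^*\mathcal L=(f')^*i^*\mathcal L\in\ClTLT(X)$ and, using functoriality of $\Trop(-)^*$ built into Construction \ref{ITconstr:Pullbacks of Cartier divisors},
\begin{equation*}
\Trop_X(f^*\mathcal L)=\Trop(f')^*\Trop_{\cOrb(\delta)}(i^*\mathcal L)=\Trop(f')^*\Trop(i)^*\Trop_Y(\mathcal L)=\Trop(f)^*\Trop_Y(\mathcal L).
\end{equation*}
The main obstacle is the closed immersion step: one has to exhibit an \emph{algebraic} rational function $g$ witnessing the tropical choice $\psi_\delta$, check that $D-\divv(g)$ really is disjoint from $\cOrb(\delta)$ (not just from the open stratum), and verify through the identifications of Lemma \ref{ITlem: complex of stratum} that the restricted divisor matches $\overline{\psi'}$ cone by cone; the dominant case and the final assembly are then formal.
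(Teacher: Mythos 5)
Your proposal is correct and follows essentially the same route as the paper: reduce to the dominant case (immediate from the definition of $\Trop(f)$) and the inclusion of $\cOrb(\delta)$, then choose $g$ realizing $\psi_\delta$, pass to $D-\divv(g)$, restrict to $\cOrb(\delta)$, and match the result with the descended tropical divisor via Lemma \ref{ITlem: complex of stratum} and Construction \ref{ITconstr:Pullbacks of Cartier divisors}. Only the phrase in your closing remarks that $D-\divv(g)$ must be ``disjoint from $\cOrb(\delta)$'' overstates what is needed (and what is true): its support merely must not contain $\cOrb(\delta)$, which is exactly what you establish in the body of the argument.
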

\begin{proof}
It suffices to prove the lemma for dominant toroidal morphisms and closed immersions of closures of strata. The dominant case follows immediately from the definition of $\Trop(f)$. So we may assume that $X=\cOrb(\delta)$ for some $\delta\in\Delta=\Sigma(Y)$, and $f\colon \cOrb(\delta)\rightarrow Y$ is the inclusion.  Let $D\in\TLT(Y)$ be a representative for $\mathcal L$ and write $\psi=\Trop_Y(D)$. Since $D$ is in $\TLT(Y)$, there exists $g\in M^\Delta$ such that $\psi|_\delta=g\circ\phi_\Delta|_\delta$. The tropicalization of the divisor $D-\divv(g)$ is $\psi-g\circ\phi_\Delta$. This vanishes on $\delta$ by construction, which means that $D-\divv(g)$ is supported away from $\Orb(\delta)$. By Lemma \ref{ITlem: complex of stratum} the tropicalization of the restriction of $D-\divv(g)$ to $\cOrb(\delta)$ is given by the tropical divisor in $\Div(\Star(\delta))=\Div(\Sigma(\cOrb(\delta)))$ induced by $\psi-g\circ\phi_\Delta$. This finishes the proof because $(D-\divv(g))|_{\cOrb(\delta)}$ represents $f^*\mathcal L$, and  the divisor on $\Star(\delta)$ induced by $\psi-g\circ\phi_\Delta$ represents  $\Trop(f)^*\psi$ by Construction \ref{ITconstr:Pullbacks of Cartier divisors}.
\end{proof}

\begin{prop}
\label{RATprop:Cup products and Tropicalization commute}
Let $X$ be a complete toroidal embedding, let $\mathcal L\in\ClTLT(X)$, and let $c\in A^{k}(X)$. Then
\begin{equation*}
\Trop(c_1(\mathcal L)\cup c)=\varprod{\Trop(\mathcal L)}{\Trop(c)}.
\end{equation*}  
\end{prop}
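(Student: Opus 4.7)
The plan is to verify equality of the two $(n{-}k{-}1)$-dimensional Minkowski weights (with $n=\dim X$) cone by cone, using linearity in $c$ to reduce to checking a single integer identity at each cone $\tau\in\Sigma_{(n-k-1)}$. Fix such $\tau$ and let $i\colon Y\coloneqq\cOrb(\tau)\hookrightarrow X$ and $j\colon\Star(\tau)\hookrightarrow\overline\Sigma$ be the inclusions. For the right-hand side, Proposition~\ref{ITprop:cup and cap product commute}(a) gives $j^*(\varprod{\psi}{\Trop(c)})=\varprod{j^*\psi}{j^*\Trop(c)}$, and evaluating at the zero cone of $\Star(\tau)$ and unravelling the cup-product formula yields
\[
(\varprod{\psi}{\Trop(c)})(\tau) \;=\; \sum_{\rho\in\Star(\tau)_{(1)}} j^*\psi(u_\rho)\cdot j^*\Trop(c)(\rho).
\]
Using the identification $\Sigma(Y)\cong\Star(\tau)$ of weakly embedded cone complexes from Lemma~\ref{ITlem: complex of stratum}, together with Lemma~\ref{RATlem: pullback of divisor to stratum} and Proposition~\ref{RATprop:tropicalization of pullback of cocycle to boundary}, I rewrite $j^*\psi$ as a representative $\psi_Y$ of $\Trop_Y(i^*\mathcal L)$ and $j^*\Trop(c)$ as $\Trop_Y(i^*c)$, so the right-hand side becomes $\sum_{\rho\in\Sigma(Y)_{(1)}}\psi_Y(u_\rho)\,\Trop_Y(i^*c)(\rho)$.

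For the left-hand side, the projection formula for $i$ applied to the cocycle $c_1(\mathcal L)\cup c\in A^{k+1}(X)$ and the class $[\cOrb(\tau)]=i_*[Y]$ gives $\Trop(c_1(\mathcal L)\cup c)(\tau)=\deg_Y(c_1(i^*\mathcal L)\cap i^*c\cap[Y])$. The proposition therefore reduces to the following intrinsic identity on $Y$: for any complete $(k+1)$-dimensional toroidal embedding $Y$, any $\mathcal M\in\ClTLT(Y)$ with representative $\varphi\coloneqq\Trop_Y(\mathcal M)$, and any $d\in A^k(Y)$,
\begin{equation}\label{intrinsicgoal}
\deg_Y\bigl(c_1(\mathcal M)\cap d\cap[Y]\bigr) \;=\; \sum_{\rho\in\Sigma(Y)_{(1)}}\varphi(u_\rho)\,\Trop_Y(d)(\rho).\tag{$\star$}
\end{equation}

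To prove \eqref{intrinsicgoal}, I pass to a smooth toroidal modification $\pi\colon\widetilde Y\to Y$ from a strictly simplicial subdivision of $\Sigma(Y)$. The projection formula for $\pi$ rewrites the left-hand side as $\deg(c_1(\pi^*\mathcal M)\cap \pi^*d\cap[\widetilde Y])$; on the combinatorial side $\varphi$ is unchanged as a piecewise linear function on the subdivision, while for any ray $\rho'$ of $\Sigma(\widetilde Y)$ the projection-formula identity $\Trop_{\widetilde Y}(\pi^*d)(\rho')=\deg(d\cap\pi_*[\cOrb_{\widetilde Y}(\rho')])$ shows that this weight equals $\Trop_Y(d)(\rho')$ when $\rho'\in\Sigma(Y)_{(1)}$ (the strict transform pushes forward to the original boundary divisor) and vanishes when $\rho'$ is a new ray (the corresponding exceptional divisor is contracted to a subvariety of dimension $<k$, so its pushforward vanishes). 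Hence the right-hand side of \eqref{intrinsicgoal} is invariant under $\pi$, and I may assume $Y$ is smooth. In that case I write $\mathcal M=\mathcal O_Y(D')$ with $D'=\sum_\rho a_\rho D_\rho$ a Cartier divisor supported on the boundary divisors $D_\rho=\cOrb_Y(\rho)$; then $c_1(\mathcal M)\cap[Y]=[D']$ and
\[
\deg_Y\bigl(c_1(\mathcal M)\cap d\cap[Y]\bigr) = \sum_\rho a_\rho\deg(d\cap[D_\rho]) = \sum_\rho a_\rho\,\Trop_Y(d)(\rho)
\]
directly from the definition of tropicalization. Finally, on any combinatorial open $Y(\sigma)$ with $\rho\prec\sigma$ we have $D'|_{Y(\sigma)}=\divv(f_\sigma)$ for some $f_\sigma\in M^Y$, and by construction of the weak embedding $\langle f,\phi_Y(v_\rho)\rangle=\ord_{D_\rho}(f)$ for all $f\in M^Y$ (where $v_\rho$ is the primitive generator of $N^\rho$), so $\varphi(u_\rho)=f_\sigma(u_\rho)=\ord_{D_\rho}(f_\sigma)=a_\rho$, completing the proof of \eqref{intrinsicgoal}. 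I expect the main obstacle to be the careful bookkeeping in the first paragraph: tracing through the lattice morphism $N^Y\to N^{\Star(\tau)}$ of Lemma~\ref{ITlem: complex of stratum} to match the lattice normal vectors on $\Star(\tau)$ with those on $\Sigma(Y)$, while keeping track of the compatibility with both weak embeddings.
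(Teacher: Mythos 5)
Your argument is correct, and its overall skeleton matches the paper's: both proofs fix a cone $\tau$, pass to $Y=\cOrb(\tau)$, and compare the weight $\deg\bigl((c_1(\mathcal L)\cup c)\cap[\cOrb(\tau)]\bigr)$ with the cup-product weight at $\tau$ using Lemma~\ref{RATlem: pullback of divisor to stratum}. Where you genuinely diverge is in the middle: the paper disposes of your identity $(\star)$ in one line, by observing that for the boundary \tlt{}-divisor $D_\tau$ representing $i^*\mathcal L$ on the (possibly singular) variety $\cOrb(\tau)$ one has $c_1(i^*\mathcal L)\cap[\cOrb(\tau)]=[D_\tau]=\sum_{\tau\prec\sigma}\psi_\tau(u_{\sigma/\tau})[\cOrb(\sigma)]$, the coefficients being given by the pairing $\langle D,u\rangle$ with primitive ray generators quoted from \cite[p.~63]{KKMSD73} in the proof of Lemma~\ref{ITlem: complex of stratum}; capping with $c$ then immediately yields the cup-product formula. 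Your detour through a strictly simplicial modification $\pi\colon\widetilde Y\to Y$ is therefore unnecessary: the identity $c_1(\mathcal O_Y(D'))\cap[Y]=[D']$ holds for any Cartier divisor on any complete variety, and the coefficient of $[D_\rho]$ in $[D']$ is $\varphi(u_\rho)$ by the same \cite{KKMSD73} pairing you invoke in your final step — smoothness only buys you the cosmetic fact that $D'$ is literally an integer combination of the $D_\rho$ as a Cartier divisor. That said, your invariance argument (projection formula on the cohomological side; $\pi_*[\cOrb_{\widetilde Y}(\rho')]=[\cOrb_Y(\rho')]$ for old rays and $=0$ for new rays on the tropical side) is sound, and it has the mild virtue of making the smooth case self-contained. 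Two small points you should make explicit: the right-hand side of $(\star)$ is independent of the chosen representative $\varphi$ of $\Trop_Y(\mathcal M)$ only because of the balancing of $\Trop_Y(d)$ at the zero cone, and in the smooth case you must choose the boundary divisor $D'$ so that $\Trop_Y(D')=\varphi$ (possible by the bijection between $\Div_{Y_0}(Y)$ and $\Div(\Sigma(Y))$ and the compatibility of linear equivalences), since your computation identifies $a_\rho$ with the values of that particular representative.
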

\begin{proof}
Let $n=\dim(X)$. Furthermore, let $\tau$ be a $(n-k-1)$-dimensional cone of $\Sigma=\Sigma(X)$, and let $\psi\in\TLT(\Sigma)$ and $\psi_\tau\in\TLT(\Sigma(\cOrb(\tau)))$ be representatives for $\Trop(\mathcal L)$ and $\Trop(i)^*\Trop(\mathcal L)$, respectively, where $i\colon\cOrb(\tau)\rightarrow X$ is the inclusion map. By Lemma \ref{RATlem: pullback of divisor to stratum} the tropicalization of $i^*\mathcal L$ is represented by $\psi_\tau$, hence
\begin{equation*}
\sum_{\tau\prec\sigma} \psi_\tau(u_{\sigma/\tau})[\cOrb(\sigma)]= [i^*\mathcal L]=c_1(\mathcal L)\cap [\cOrb(\tau)] \quad\text{in } A_{n-k-1}(X).
\end{equation*}
Therefore, the weight of $\Trop(c_1(\mathcal L)\cup c)$ at $\tau$ is
\begin{equation*}
\deg((c_1(\mathcal L)\cup c) \cap [\cOrb(\tau)])=\sum_{\tau\prec\sigma}\psi_\tau(u_{\sigma/\tau})\deg(c\cap [\cOrb(\sigma)]).
\end{equation*}
Since $\deg(c\cap[\cOrb(\sigma)])$ is the weight of $\Trop(c)$ at $\sigma$, this is equal to the weight of $\varprod{\psi}{\Trop(c)}$ at $\tau$ by Construction \ref{ITconstr:Cup product}.
\end{proof}

\begin{thm}
\label{RATthm:Tropicalization and Intersections commute}
Let $X$ be a complete toroidal embedding and let $D\in\Bound$. Then for every subvariety $Z\subseteq X_0$ we have
\begin{equation*}
\Trop(D\cdot[\overline Z])=\Trop_X(D)\cdot \Trop(Z).
\end{equation*} 
\end{thm}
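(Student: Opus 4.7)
The strategy is to reduce to a smooth toroidal modification in which all relevant intersections are proper, and then to match the resulting tropical cycles cone by cone.

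First, I would choose a strictly simplicial proper subdivision $\Sigma'$ of $\Sigma=\Sigma(X)$ refining $\trop_X(Z)$, so that in the associated smooth toroidal modification $f\colon X'=X\times_\Sigma\Sigma'\to X$ the closure $\overline Z'$ of $Z$ in $X'$ meets every stratum properly. Because $f$ restricts to an isomorphism over $X_0$, we have $f_*[\overline Z']=[\overline Z]$, and the algebraic projection formula combined with Theorem~\ref{RATthm:Push-forward and Tropicalization commute} gives $\Trop_X(D\cdot[\overline Z])=\Trop(f)_*\Trop_{X'}(f^*D\cdot[\overline Z'])$. On the tropical side, Proposition~\ref{ITprop:Projection formula}a, together with the identities $\Trop(f^*D)=\Trop(f)^*\Trop_X(D)$ (immediate from the definition of the weak embedding) and $\Trop(f)_*\Trop_{X'}(Z)=\Trop_X(Z)$ (again Theorem~\ref{RATthm:Push-forward and Tropicalization commute}), yields $\Trop_X(D)\cdot\Trop_X(Z)=\Trop(f)_*\bigl(\Trop_{X'}(f^*D)\cdot\Trop_{X'}(Z)\bigr)$. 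Thus it suffices to prove the theorem when $X$ is smooth and $\overline Z$ meets every stratum properly.

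Under this assumption I reduce by linearity to $D=E_\rho=\cOrb(\rho)$ for a single ray $\rho\in\Sigma_{(1)}$. A dimension count using $\dim(\overline Z\cap\cOrb(\tau))=d-\dim\tau$ forces every $(d-1)$-dimensional component of $E_\rho\cdot\overline Z$ to have its generic point in the open stratum $\Orb(\rho)$: a component lying entirely in $\Orb(\tau)$ with $\rho\prec\tau$ would be contained in $\overline Z\cap\cOrb(\tau)$ of dimension $\le d-2$. Consequently, under the decomposition $Z_*(X)=\bigoplus_\sigma Z_*(\Orb(\sigma))$ the cycle $E_\rho\cdot[\overline Z]$ lies in $Z_{d-1}(\Orb(\rho))$, and $\Trop_X(E_\rho\cdot[\overline Z])$ is supported in the $\Star(\rho)$-component of $Z_{d-1}(\overline\Sigma)$. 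Let $c$ be a Minkowski weight representing $\Trop_X(Z)$, so $c(\sigma)=\deg_X([\overline Z]\cdot[\cOrb(\sigma)])$ for $\dim\sigma=d$. Tropically, $\Trop_X(E_\rho)(u_{\rho'})=\delta_{\rho\rho'}$, so Construction~\ref{ITconstr:intersection products with tropical Cartier divisors} collapses to $\Trop_X(E_\rho)\cdot\Trop_X(Z)=[i_\rho^*c]$, whose weight at a cone $\sigma/\rho$ of $\Star(\rho)$ is $c(\sigma)$ by Construction~\ref{ITconstr:pulling back Minkowski weights}. Algebraically, using Lemma~\ref{ITlem: complex of stratum} to identify $\Sigma(\cOrb(\rho))$ with $\Star(\rho)$, the weight of $\Trop_{\cOrb(\rho)}(E_\rho\cdot[\overline Z])$ at $\sigma/\rho$ is $\deg_{\cOrb(\rho)}\bigl([E_\rho\cdot\overline Z]\cdot[\cOrb(\sigma)]\bigr)$.

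The main remaining step, and the principal obstacle, is to show
\begin{equation*}
\deg_{\cOrb(\rho)}\bigl([E_\rho\cdot\overline Z]\cdot[\cOrb(\sigma)]\bigr)=\deg_X\bigl([\overline Z]\cdot[\cOrb(\sigma)]\bigr).
\end{equation*}
I would prove this by a local Koszul computation in a formal toric chart at each point $p\in\overline Z\cap\cOrb(\sigma)$: there $\widehat{\mathcal O}_{X,p}\cong k[[x_1,\dots,x_n]]$ with $E_\rho=V(x_1)$ and $\cOrb(\sigma)=V(x_1,\dots,x_d)$, while $\overline Z$ is locally cut by a sequence $g_1,\dots,g_{n-d}$ forming, together with $x_1,\dots,x_d$, a regular sequence (by properness of $\overline Z\cap\cOrb(\sigma)$). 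Both intersection numbers are then computed by the same length $\ell\bigl(k[[x_1,\dots,x_n]]/(x_1,\dots,x_d,g_1,\dots,g_{n-d})\bigr)$; summing over the finitely many intersection points yields the desired equality and completes the proof.
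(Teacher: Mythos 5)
Your reduction steps match the paper's proof: the passage to a strictly simplicial modification via the algebraic and tropical projection formulas together with Theorem \ref{RATthm:Push-forward and Tropicalization commute} and Lemma \ref{RATlem: pullback of divisor to stratum}, the reduction to $D=\cOrb(\rho)$ for a single ray, and the identification of the tropical side with $[i_\rho^*c]$, whose weight at $\sigma/\rho$ is $\deg_X([\overline Z]\cdot[\cOrb(\sigma)])$, are all exactly the paper's argument. (One small point you leave implicit: to say the weight of $\Trop_{\cOrb(\rho)}(\cOrb(\rho)\cdot[\overline Z])$ at $\sigma/\rho$ is computed by intersecting with $[\cOrb(\sigma)]$ inside $\cOrb(\rho)$ without any further modification, you need that every component $W_i$ of $\cOrb(\rho)\cdot[\overline Z]$ meets all strata of $\cOrb(\rho)$ properly; this does follow from the properness of the stratum intersections of $\overline Z$, as in your dimension count, but it should be said.)

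The genuine gap is in your ``main remaining step''. You propose to prove
\begin{equation*}
\deg_{\cOrb(\rho)}\bigl([\cOrb(\rho)\cdot\overline Z]\cdot[\cOrb(\sigma)]\bigr)=\deg_X\bigl([\overline Z]\cdot[\cOrb(\sigma)]\bigr)
\end{equation*}
by a local Koszul computation in which $\overline Z$ is cut out in $k[[x_1,\dots,x_n]]$ by $n-d$ equations $g_1,\dots,g_{n-d}$. A $d$-dimensional subvariety $Z\subseteq X_0$ is in general \emph{not} a local complete intersection, so such a presentation simply does not exist, and nothing in the hypotheses lets you assume it. Moreover, even where local equations are available, identifying the Fulton intersection numbers on both sides with the naive length $\ell\bigl(k[[x_1,\dots,x_n]]/(x_1,\dots,x_d,g_1,\dots,g_{n-d})\bigr)$ requires Cohen--Macaulay-type hypotheses (otherwise the correct local multiplicity is a Samuel multiplicity or Serre Tor-formula, which can differ from the length of the naive quotient), and the claim that the left-hand side -- a two-step product, first the Cartier divisor $\cOrb(\rho)$ against $\overline Z$, then $[\cOrb(\sigma)]$ inside $\cOrb(\rho)$ -- collapses to the same length is precisely an associativity statement you would still have to prove. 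All of this is unnecessary: since $\cOrb(\rho)\cdot[\overline Z]=i_\rho^*[\overline Z]$ in $A_*(\cOrb(\rho))$ (as $\overline Z\not\subseteq\cOrb(\rho)$) and $i_{\rho*}[\cOrb(\sigma)]=[\cOrb(\sigma)]$, the displayed equality is a one-line consequence of the projection formula, which is how the paper concludes. Replace the local computation by this global argument and your proof is complete.
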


\begin{proof} 
First note that $D$ is supported on $X\setminus X_0$ and hence $D\cdot[\overline Z]$ is a well-defined cycle. Let $\Sigma'$ be a strictly simplicial proper subdivision of $\Sigma=\Sigma(X)$ such that $\trop(Z)$ is a union of its cones, and let $f\colon X'\coloneqq X\times_\Sigma\Sigma'\rightarrow X$ be the corresponding toroidal modification. The algebraic projection formula and Theorem \ref{RATthm:Push-forward and Tropicalization commute} imply that
\begin{equation*}
\Trop_X(D\cdot[\overline Z])=\Trop_X(f_*(f^*D\cdot[\overline Z']))=\Trop(f)_*\Trop_{X'}(f^*D\cdot[\overline Z']),
\end{equation*}
whereas the tropical projection formula and Lemma \ref{RATlem: pullback of divisor to stratum} imply that
\begin{multline*}
\Trop_X(D)\cdot\Trop_X(Z)=\Trop_X(D)\cdot(\Trop(f)_*\Trop_{X'}(Z))=\\
=\Trop(f)_*(\Trop_{X'}(f^*D)\cdot\Trop_{X'}(Z)).
\end{multline*}
This reduces to the case where $X=X'$ and we may assume that $\Sigma$ is strictly simplicial and $\overline Z$ intersects all boundary strata properly.

Denote $\Trop(D)$ by $\psi$, and let $\rho\in\Sigma_{(1)}$ be a ray of $\Sigma$. Since $X$ is smooth, the boundary divisor $\cOrb(\rho)$ is Cartier. Let $\cOrb(\rho)\cdot[\overline Z]= [\cOrb(\rho)\cap \overline Z]=\sum_{i=1}^k a_i [W_i]$.  Because $\overline Z$ intersects all strata of $X$ properly, each $W_i$ meets $\Orb(\rho)$. For the same reason every $W_i$ meets all strata of $\cOrb(\rho)$ properly. It follows that $\Trop_X(W_i)$ is represented by the Minkowski weight on $\Star(\rho)$ whose weight on a cone $\sigma/\rho$ is equal to $\deg([\cOrb(\sigma)]\cdot [W_i])$, where the intersection product is taken in $\cOrb(\rho)$. Denote by $i_\rho$ the inclusion of $\cOrb(\rho)$ into $X$. Then $\cOrb(\rho)\cdot\ [\overline Z]=i_\rho^*[\overline Z]$ in $A_*(\cOrb(\sigma))$, and hence the weight of $\Trop_X(\cOrb(\rho)\cdot[\overline Z])$ at $\sigma/\rho$ is equal to 
\begin{equation*}
\sum_{i=1}^k a_i\deg([\cOrb(\sigma)]\cdot [W_i])=\deg([\cOrb(\sigma)]\cdot i_\rho^*[\overline Z])=\deg([\cOrb(\sigma)]\cdot [\overline Z]),
\end{equation*}
where the first two intersection products are taken in $\cOrb(\sigma)$, whereas the last one is taken in $X$, and the last equality follows from the projection formula. Since $\deg([\cOrb(\sigma)]\cdot [\overline Z])$ is the weight of $\Trop_X(Z)$ at $\sigma$, this implies the equality
\begin{equation*}
\Trop_X(\cOrb(\rho)\cdot[\overline Z])=j_\rho^*\Trop_X(Z),
\end{equation*}
where $j_\rho\colon\Star(\rho)\rightarrow\overline \Sigma$ is the inclusion.
Together with the fact that the divisor $\cOrb(\rho)$ occurs in $D$ with multiplicity $\psi(u_\rho)$, where $u_\rho$ denotes the primitive generator of $\rho$, we obtain
\begin{equation*}
\Trop_X(D\cdot[\overline Z])=\sum_{\rho\in\Sigma_{(1)}}\psi(u_\rho)\Trop_X(\cOrb(\rho)\cdot[\overline Z])=\sum_{\rho\in\Sigma_{(1)}} \psi(u_\rho)j_\rho^*\Trop_X(Z),
\end{equation*}
which is equal to $\psi\cdot\Trop_X(Z)$ by Construction \ref{ITconstr:intersection products with tropical Cartier divisors}. 
\end{proof}

\subsection{Tropicalizing Cycle Classes}
As already pointed out in Remark \ref{ITrem:Tropical rational equivalence is analogous to algebraic rational equivalence}, rational equivalence for cycles on weakly embedded extended cone complexes is defined very similarly as rational equivalence in algebraic geometry. In fact, now that we know that tropicalization respects push-forwards and intersections with boundary divisors it is almost immediate that it respects rational equivalence as well. The only thing sill missing is to relate the weakly embedded cone complex $\Sigma(X\times Y)$ of the product of two toroidal embeddings  $X$ and $Y$ to the product $\Sigma(X)\times \Sigma(Y)$ of their weakly embedded cone complexes. First note that by combining local toric charts for $X$ and $Y$ it is easy to see that $X_0\times Y_0\subseteq X\times Y$ really is a toroidal embedding. With the same method we see that the cone complexes $\Sigma(X\times Y)$ and $\Sigma(X)\times \Sigma(Y)$ are naturally isomorphic, the isomorphism being the product $\Trop(p)\times\Trop(q)$ of the tropicalizations of the projections from $X\times Y$. That this even is an isomorphism of weakly embedded cone complexes, that is that $N^{X\times Y}\rightarrow N^X\times N^Y$ is an isomorphism as well, follows from a result by Rosenlicht which states that the canonical map 
\begin{equation*}
\Gamma(X_0,\mathcal O_X^*)\times\Gamma(Y_0,\mathcal O_Y^*)\rightarrow \Gamma(X_0\times Y_0,\mathcal O_{X\times Y}^*)
\end{equation*}
is surjective \cite[Section 1]{KKV89}.

\begin{prop}
\label{RATprop:Tropicalization respects rational equivalence}
Let $X$ be a complete toroidal embedding with weakly embedded cone complex $\Sigma$. Then the tropicalization $\Trop_X\colon Z_*(X)\rightarrow Z_*(\overline\Sigma)$ induces a morphism $A_*(X)\rightarrow A_*(\overline\Sigma)$ between the Chow groups, which we again denote by $\Trop_X$.
\end{prop}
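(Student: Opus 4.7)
The plan is to show that any generator of algebraic rational equivalence in $Z_*(X)$ is sent by $\Trop_X$ into $R_*(\overline\Sigma)$; by linearity of $\Trop_X$, this suffices. A generator has the form $\alpha=p_*(\divv(q^*x)\cdot[V])$ (cf.\ Remark \ref{ITrem:Tropical rational equivalence is analogous to algebraic rational equivalence}), where $V\subseteq X\times\Pj^1$ is a closed subvariety dominating $\Pj^1$ via the second projection $q$, $p$ is the first projection, and $x$ is the standard coordinate on $\G_m\subseteq\Pj^1$. The generic point of $V$ lies in a unique stratum $\Orb(\sigma)\times\G_m$ of $X\times\Pj^1$; setting $W=\cOrb(\sigma)$ and letting $\widehat V\subseteq W\times\Pj^1$ denote the closure of $V\cap(\Orb(\sigma)\times\G_m)$, the algebraic projection formula for the closed immersion $\tilde i\colon W\times\Pj^1\hookrightarrow X\times\Pj^1$ rewrites
$$\alpha=i_*p'_*\bigl(\divv(q'^*x)\cdot[\widehat V]\bigr),$$
where $i,p',q'$ are the analogous morphisms on $W$ and $W\times\Pj^1$.

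Next I would apply Theorem \ref{RATthm:Push-forward and Tropicalization commute} twice, to $i$ and to $p'$, followed by Theorem \ref{RATthm:Tropicalization and Intersections commute} applied to the boundary divisor $\divv(q'^*x)$ and the subvariety $\widehat V$, whose generic point lies in the open part $(W\times\Pj^1)_0=\Orb(\sigma)\times\G_m$. This yields
$$\Trop_X(\alpha)=\Trop(i)_*\Trop(p')_*\bigl(\Trop(\divv(q'^*x))\cdot\Trop_{W\times\Pj^1}(\widehat V)\bigr).$$
Combining Lemma \ref{ITlem: complex of stratum} with the Rosenlicht-based product identification discussed just before the proposition, we identify $\Sigma(W\times\Pj^1)\cong\Sigma(W)\times\tR^1\cong\Star_\Sigma(\sigma)\times\tR^1=\Star_{\Sigma\times\tR^1}(\sigma\times 0)$ as weakly embedded cone complexes, under which $\Trop(p')$ and $\Trop(q')$ become the two projections. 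A direct inspection of the toric case gives $\Trop_{\Pj^1}(\divv(x))=\idR$ (since $\divv(x)=[0]-[\infty]$ evaluates to $+1$ on $u_{\R\gz}$ and $-1$ on $u_{\R\lz}$), hence $\Trop(\divv(q'^*x))=\Trop(q')^*\idR$. Setting $A=\Trop_{W\times\Pj^1}(\widehat V)\in Z_*(\Star(\sigma\times 0))$, we arrive at
$$\Trop_X(\alpha)=\Trop(i)_*\bigl(\Trop(p')_*(\Trop(q')^*\idR\cdot A)\bigr).$$

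By Definition \ref{ITdefn:Rational Equivalence} the inner expression is a generator of $R_*(\overline{\Star(\sigma)})$, and by Remark \ref{ITrem: Rational Equivalence is well-def} its image under the inclusion $\Trop(i)\colon\overline{\Star(\sigma)}\hookrightarrow\overline\Sigma$ lies in $R_*(\overline\Sigma)$, finishing the proof. The main technical hurdle is the bookkeeping around coherently matching $\Sigma(W\times\Pj^1)$ with $\Star_{\Sigma\times\tR^1}(\sigma\times 0)$ and verifying that tropicalization sends $\divv(q'^*x)$ to $\Trop(q')^*\idR$ under these identifications; both are direct consequences of previously established results, so once they are in place the argument is a clean combination of the two main commutativity theorems with the algebraic projection formula.
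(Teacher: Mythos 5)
Your argument is correct and follows essentially the same route as the paper's proof: both reduce to the standard generators $p_*(q^*([0]-[\infty])\cdot[V])$ of algebraic rational equivalence, apply Theorems \ref{RATthm:Push-forward and Tropicalization commute} and \ref{RATthm:Tropicalization and Intersections commute} together with the identification $\Sigma(X\times\Pj^1)\cong\Sigma\times\tR^1$ (via the Rosenlicht product statement), and recognize the outcome as a generator of $R_*(\overline\Sigma)$ via Definition \ref{ITdefn:Rational Equivalence} and Remark \ref{ITrem: Rational Equivalence is well-def}. The only difference is that you pass explicitly to the stratum closure $\cOrb(\sigma)\times\Pj^1$ before applying the two theorems, so that Theorem \ref{RATthm:Tropicalization and Intersections commute} (which assumes the subvariety meets the open part) applies verbatim -- a reduction the paper leaves implicit in its definition of $\Trop_X$ on cycles supported in boundary strata and in the interpretation of the intersection on $\Star(\sigma\times 0)$.
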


\begin{proof}
By definition, the Chow group $A_k(X)$ is equal to the quotient of $Z_k(X)$ by the subgroup $R_k(X)$ generated by cycles of the form $p_*(q^*([0]-[\infty])\cdot [W])$, where $p$ and $q$ are the first and second projection from the product $X\times\Pj^1$, and $W$ is an irreducible subvariety of $X\times\Pj^1$ mapping dominantly to $\Pj^1$. Considering $\Pj^1$ with its standard toric structure, the projections are dominant toroidal morphisms. The boundary divisor $[0]-[\infty]$ is given by the identity on the cone complex $\tR^1$ of $\Pj^1$ when considering its natural identification with $\R$. This tropical divisor was denoted by $\idR$ in Subsection \ref{ITsubsec:Rational equivalence}. It follows that the tropicalization of the Cartier divisor $q^*([0]-[\infty])$ is equal to $\Trop(q)^*(\idR)$. Applying Theorems \ref{RATthm:Push-forward and Tropicalization commute} and \ref{RATthm:Tropicalization and Intersections commute} we obtain that
\begin{equation*}
\Trop_X(p_*(q^*([0]-[\infty])\cdot [W]))=\Trop(p)_*(\Trop(q)^*\idR\cdot \Trop_{X\times\Pj^1}(W)).
\end{equation*}
Noting that $\Trop(p)$ and $\Trop(q)$ are the projections from $\Sigma(X\times\Pj^1)=\Sigma\times\tR^1$, and that the dominance of $W\rightarrow\Pj^1$ implies $\Trop(W)\in Z_k(\Star_{\Sigma(X\times\Pj^1)}(\sigma\times 0))$ for some $\sigma\in\Sigma$, we see that we have an expression exactly as given for the generators of $R_k(\overline\Sigma)$ in Definition \ref{ITdefn:Rational Equivalence}. Thus, the tropicalization $\Trop_X$ maps $R_k(X)$ to zero in $A_k(\overline\Sigma)$ and the assertion follows.
\end{proof}

\begin{example}
Consider the toroidal embedding $X$ of Example \ref{ITexample: Toric variety and P2 without two hyperplanes} b), that is $X=\Pj^2$, and the boundary is the union $H_1\cup H_2$, where $H_i=V(x_i)$. Its weakly embedded cone complex $\Sigma$ is shown in Figure \ref{ITfig:Weakly embedded cone complex}. Let $L_1=V(x_1+x_2)$, and $L_2=V(x_1+x_2-x_0)$. The line $L_2$ intersects all strata properly as it does not pass through the intersection point $P=(1:0:0)$ of $H_1$ and $H_2$. Because $L_2$ intersects $H_1$ and $H_2$ transversally, this implies that $\Trop(L_2)$ is represented by the Minkowski weight on $\Sigma$ having weight $1$ on both of its rays. In contrast, $L_1$ passes through $P$. Its strict transform in the blowup of $X$ at $P$ intersects the exceptional divisor, and the intersection is in fact transversal, but none of the strict transforms of $H_1$ or $H_2$. The blowup at $P$ is the toroidal modification corresponding to the star subdivision of $\Sigma$. We see that $\Trop(L_2)$ is the tropical cycle given by a ray in the direction of $e_1+e_2$ with multiplicity $1$, where the $e_i$ are as in Example \ref{ITexample: Toric variety and P2 without two hyperplanes} b). By  Proposition \ref{RATprop:Tropicalization respects rational equivalence} the two tropical cycles $\Trop(L_1)$ and $\Trop(L_2)$ are rationally equivalent. The proposition also tells us that the tropicalization of the closure of the graph of $(x_1+x_2)/(x_1+x_2-x_0)$ in $X\times\Pj^1$ will give rise to the relation $\Trop(L_1)-\Trop(L_2)=0$ in $A_*(\overline\Sigma)$. Taking coordinates $x=x_1/x_0$ and $y=x_2/x_0$ on $X$, and a coordinate $z$ on $\Pj^1\setminus \{\infty\}$, the graph $W$ is given by the equation $W=V(xz+yz-z-x-y)$. Using these coordinates, it is easy to compute the classical tropicalization of $W$ when considered as subvariety of $\Pj^2\times\Pj^1$ with its standard toric structure. It is not hard to see that $\Trop_{X\times\Pj^1}(W)$ can be obtained from the classical tropicalization by intersecting with $(\R\gz)^2\times\R$. Its underlying set is depicted in Figure \ref{RATfig:rational equivalence}. The weights on its maximal faces are all $1$.
\end{example}

\begin{figure}
\centering
\begin{tikzpicture}[font=\footnotesize, scale=.5]
\node [anchor= south west] at (0,0) {\includegraphics[scale=.25]{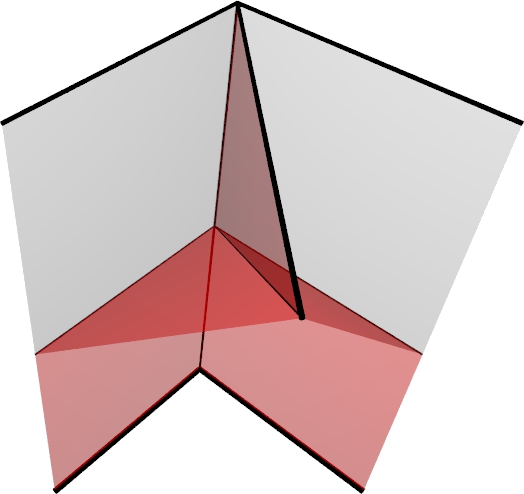}};
\node at (1.8,0)  {$-1$};
\node at (7,0)  {$-1$};
\node at (-.1,7.5) {$0$};
\node at (10,7.5) {$0$};
\node at (6,3) {$1$};
\node at (-.6,2.9) {$\R\gz e_1\times 0$};
\node at (8,2.9)[anchor=west] {$\R\gz e_2\times 0$};
\end{tikzpicture}
\caption{The tropicalization $\Trop_{X\times\Pj^1}(W)$ and its intersection with $\pr_2^*\idR$}
\label{RATfig:rational equivalence}
\end{figure}

Being able to tropicalize cycle classes we can formulate the compatibility statements of sections \ref{RATsubsec:The Sturmfels-Tevelev multiplicity formula} and \ref{RATsubsec:Tropicalization and intersections with boundary divisors} modulo rational equivalence. That the tropicalization of cycle classes commutes with push-forwards follows immediately from Theorem \ref{RATthm:Push-forward and Tropicalization commute}. For intersections with \tlt{}-divisors the compatibility is subject of the following result.

\begin{prop}
\label{RATprop:intersection of cycle class with divisor commutes with tropicalization}
Let $X$ be a complete toroidal embedding, let $\mathcal L\in \ClTLT(X)$, and let $\alpha\in A_*(X)$. Then we have 
\begin{equation*}
\Trop(c_1(\mathcal L)\cap \alpha)=\Trop(\mathcal L)\cdot\Trop(\alpha)
\end{equation*}
\end{prop}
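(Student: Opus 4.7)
The plan is to reduce via pushforward along a stratum inclusion to the case $\alpha=[\overline Z]$ with $Z\subseteq X_0$, and then verify the equality by passing to a toroidal modification on which $\overline Z$ meets every boundary stratum properly so that Theorem~\ref{RATthm:Tropicalization and Intersections commute} applies. By bilinearity I assume $\alpha=[V]$ for an irreducible subvariety $V\subseteq X$ and let $\sigma\in\Sigma=\Sigma(X)$ be the minimal cone with $V\cap\Orb(\sigma)\neq\emptyset$; write $i\colon\cOrb(\sigma)\hookrightarrow X$ for the inclusion. The algebraic projection formula gives $c_1(\mathcal L)\cap [V]=i_*(c_1(i^*\mathcal L)\cap [V])$. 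Tropicalizing the left-hand side via Theorem~\ref{RATthm:Push-forward and Tropicalization commute}, and the right-hand side via Lemma~\ref{RATlem: pullback of divisor to stratum} together with the tropical projection formula (Proposition~\ref{ITprop:Projection formula}b, which in the Chow group computes the ``$\cdot$''-pairing of Proposition~\ref{ITprop:cup product passes to rational equivalence} by virtue of Proposition~\ref{ITprop:cup and intersection product equal modulo ration equivalence}), reduces the problem to the complete toroidal embedding $\cOrb(\sigma)$ with the cycle $[V]$ now lying in its open stratum. After renaming, I therefore assume $V\subseteq X_0$.

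For this reduced case, I choose a simplicial proper subdivision $\Sigma'$ of $\Sigma$ such that $\trop_X(V)$ is a union of cones of $\Sigma'$ (using Lemma~\ref{ITlem:fine proper subdivisions exist}) and let $f\colon X':=X\times_\Sigma\Sigma'\to X$ be the associated toroidal modification. As shown in the proof of Proposition~\ref{RATprop:multiplicities are well-def}, the closure $\overline V'$ of $V$ in $X'$ meets every stratum of $X'$ properly and satisfies $f^*[\overline V]=[\overline V']$. Fix a representative $D\in\TLT(X)$ of $\mathcal L$; then $D':=f^*D\in\TLT(X')$ is still supported on the boundary of $X'$, and since $\overline V'$ intersects the support of $D'$ properly, $c_1(f^*\mathcal L)\cap [\overline V']$ is represented by the cycle $D'\cdot [\overline V']$. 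The algebraic projection formula now gives $c_1(\mathcal L)\cap [\overline V]=f_*(D'\cdot [\overline V'])$, and tropicalization produces the chain
\begin{align*}
\Trop_X\bigl(c_1(\mathcal L)\cap [\overline V]\bigr)
&=\Trop(f)_*\Trop_{X'}(D'\cdot [\overline V'])
  &&\text{by Theorem~\ref{RATthm:Push-forward and Tropicalization commute},}\\
&=\Trop(f)_*\bigl(\Trop_{X'}(D')\cdot \Trop_{X'}(V)\bigr)
  &&\text{by Theorem~\ref{RATthm:Tropicalization and Intersections commute},}\\
&=\Trop(f)_*\bigl(\Trop(f)^*\Trop_X(D)\cdot \Trop_{X'}(V)\bigr)
  &&\text{since $D'=f^*D$ with $\Trop(f)$ dominant,}\\
&=\Trop_X(D)\cdot \Trop(f)_*\Trop_{X'}(V)
  &&\text{by Proposition~\ref{ITprop:Projection formula}a,}\\
&=\Trop_X(\mathcal L)\cdot \Trop_X(V)
  &&\text{by Theorem~\ref{RATthm:Push-forward and Tropicalization commute}.}
\end{align*}

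The main obstacle is bookkeeping between the two tropical pairings: the finite-support intersection product of Construction~\ref{ITconstr:intersection products with tropical Cartier divisors} that appears in Theorem~\ref{RATthm:Tropicalization and Intersections commute}, and the cup product of Construction~\ref{ITconstr:Cup product} that extends to cycles at infinity and underlies the Chow-level pairing renamed ``$\cdot$'' in Proposition~\ref{ITprop:cup product passes to rational equivalence}. Their identification modulo rational equivalence (Proposition~\ref{ITprop:cup and intersection product equal modulo ration equivalence}) is what lets the two versions of the tropical projection formula (Proposition~\ref{ITprop:Projection formula}a and b) be used in succession, since the non-dominant morphism $\Trop(i)$ in the reduction step needs flavor b) while the dominant $\Trop(f)$ in the final chain uses flavor a). A minor point to verify en route is that $\Trop_{X'}(f^*D)=\Trop(f)^*\Trop_X(D)$ holds as divisors rather than merely as classes, which is immediate from the dominance of $\Trop(f)$ and Construction~\ref{ITconstr:Pullbacks of Cartier divisors}.
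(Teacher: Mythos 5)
Your argument is correct and follows essentially the same route as the paper: reduce by linearity and the stratum inclusion $i\colon\cOrb(\sigma)\rightarrow X$ (via Theorem \ref{RATthm:Push-forward and Tropicalization commute}, Lemma \ref{RATlem: pullback of divisor to stratum}, and the tropical projection formula) to a cycle meeting the open stratum, then conclude with Theorem \ref{RATthm:Tropicalization and Intersections commute} and the identification of the cup product with the ``$\cdot$''-product modulo rational equivalence. The only differences are inessential: your explicit toroidal modification in the reduced case is redundant, since Theorem \ref{RATthm:Tropicalization and Intersections commute} together with Proposition \ref{RATprop:Tropicalization respects rational equivalence} applies directly to the cycle $D\cdot[\overline V]$ on $X$ itself, and your unused assertion $f^*[\overline V]=[\overline V']$ is not actually covered by the cited argument when $\overline V$ meets the strata of $X$ improperly (the base there is assumed adapted to $\trop_X(Z)$ and simplicial), but your chain of equalities only uses $f_*[\overline V']=[\overline V]$, so nothing is affected.
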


\begin{proof}
Both sides are linear in $\alpha$, so we may assume that $\alpha=[Z]$ for some irreducible subvariety $Z$ of $X$. Let $\sigma\in\Sigma(X)$ be the cone such that $\Orb(\sigma)$ contains the generic point of $Z$, and denote by $i\colon \cOrb(\sigma)\rightarrow X$ the inclusion map. Furthermore, let $D\in\TLT(\cOrb(\sigma))$ be a representative of $i^*\mathcal L$. By Theorem \ref{RATthm:Push-forward and Tropicalization commute} and the projection formula we have
\begin{equation*}
\Trop_X(c_1(\mathcal L)\cap [Z])=\Trop(i)_*\Trop_{\cOrb(\sigma)}\left(c_1(i^*\mathcal L)\cap [Z]\right)=\Trop(i)_*\Trop_{\cOrb(\sigma)}(D\cdot [Z])
\end{equation*}
By Lemma \ref{RATlem: pullback of divisor to stratum} and Theorem \ref{RATthm:Tropicalization and Intersections commute} this is equal to 
\begin{equation*}
\Trop(i)_*\left(\big(\Trop(i)^*\Trop(\mathcal L)\big)\cdot\Trop_{\cOrb(\sigma)}\big(Z\cap \Orb(\sigma)\big)\right)
\end{equation*}Propositions
which is equal to $\Trop_X(\mathcal L)\cdot \Trop_X([Z])$ by the tropical projection formula (Proposition \ref{ITprop:Projection formula}) and the definition of the tropicalization.
\end{proof}

\subsection{Comparison with Classical Tropicalization}

Suppose we are given a complete toroidal embedding $X$, together with a closed immersion $\iota\colon X_0\rightarrow T$ into an algebraic torus $T$ with lattice of $1$-parameter subgroups $\Lambda$ and character lattice $\Lambvee$. Then we have two different ways to tropicalize $X_0$, either by taking the tropicalization $\Trop_X(X_0)=\Trop_X[X]$ in $\Sigma(X)$, or by taking the classical tropicalization in $\Lambda_\R$. Every morphism $\iota\colon X_0\rightarrow T$ induces a morphism 
\begin{equation*}
f\colon\Lambvee\rightarrow M^X,\;\; m\mapsto \iota^\sharp\chi^m,
\end{equation*}
whose dualization gives rise to a weakly embedded cone complex $\Sigma_\iota(X)$ with cone complex $\Sigma(X)$ and weak embedding $\phi_{X,\iota}=f_\R^*\circ\phi_X$. We define the tropicalization $\Trop_\iota(X_0)$ of $X_0$ with respect to $\iota$ as the classical tropicalization of the cycle $\iota_*[X_0]$ in $T$. Its underlying set $|\Trop_\iota(X_0)|$  is the image of the composite map
\begin{equation*}
X_0^{\an}\xrightarrow{\iota^{\an}} T^{\an}\xrightarrow{\trop_T^{\an}} \Lambda_\R,
\end{equation*}  
where $\trop_T^{\an}$ takes coordinate-wise minus-log-absolute values. As the next lemma shows, it is completely determined by $\Sigma_\iota(X)$:

\begin{lem}
\label{RATlem:tropicalization is given by image of cone complex}
Let $X$ be a toroidal embedding, and let $\iota\colon X_0\rightarrow T$ be a morphism into the algebraic torus $T$. Then the diagram
\begin{center}
\begin{tikzpicture}[auto]

\matrix[matrix of math nodes, row sep= 4ex, column sep= 3em, text height=1.5ex, text depth= .25ex]{
|(X0an)| X_0^{\an}\cap X^\beth  & |(SigX)| \Sigma_\iota(X) \\
|(Tan)| T^{\an} & |(LambR)| \Lambda_\R, \\
};

\begin{scope}[->,font=\footnotesize]
\draw (X0an)--node{$\trop_X^{\an}$}(SigX);
\draw (Tan)--node{$\trop_T^{\an}$} (LambR);

\draw (X0an) --node{$\iota^{\an}$} (Tan);
\draw (SigX) --node{$\phi_{X,\iota}$} (LambR);
\end{scope}

\end{tikzpicture}
\end{center}
is commutative. In particular, if $X$ is complete we have $|\Trop_\iota(X_0)|=\phi_{X,\iota}(|\Sigma_\iota(X)|)$.
\end{lem}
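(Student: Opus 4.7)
The plan is to establish commutativity of the square pointwise and then derive the set-theoretic conclusion from it. Fix a point $x\in X_0^{\an}\cap X^\beth$; it is represented by a morphism $\Spec R\to X$ for some rank one valuation ring $R$ extending $k$, with valuation $\val$, whose generic point lies in $X_0$ and whose closed point $\overline x$ lies in some stratum $\Orb(\sigma)$ with $\sigma\in\Sigma(X)$. My first step will be to recall from Ulirsch's construction that $\trop_X^{\an}(x)$ is the point of $\sigma=\Hom(M^\sigma_+,\R\gz)$ characterized by $D\mapsto\val(h_D(x))$, where $h_D$ is any local equation of the effective boundary divisor $D\in M^\sigma_+$ in a neighborhood of $\overline x$ (the choice being irrelevant since two local equations differ by a unit at $\overline x$, which has zero valuation).

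The key intermediate computation will be the identity
$$\phi_X(\trop_X^{\an}(x))(g)=\val(g(x))\qquad\text{for every }g\in M^X=\Gamma(X_0,\mathcal O_X^*)/k^*.$$
To prove it I will argue that, since $g$ is invertible on $X_0$, the divisor $\divv(g)|_{X(\sigma)}$ is supported on the boundary and thus lies in $M^\sigma$. Writing it as $D_+-D_-$ with $D_\pm\in M^\sigma_+$ and picking local equations $h_\pm$ near $\overline x$, the function $u:=g\cdot h_-/h_+$ has trivial divisor on an open neighborhood of $\overline x$ in $X(\sigma)$, so it is a regular unit there; in particular $u(x)\in R^\times$ and $\val(u(x))=0$. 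Consequently
$$\val(g(x))=\val(h_+(x))-\val(h_-(x))=\trop_X^{\an}(x)(D_+)-\trop_X^{\an}(x)(D_-)=\phi_X(\trop_X^{\an}(x))(g),$$
the last equality being the definition of $\phi_X$ as dual to $g\mapsto\divv(g)|_{X(\sigma)}$.

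Specialising this identity to $g=\iota^\sharp\chi^m$ for $m\in\Lambvee$ yields commutativity of the diagram:
$$\phi_{X,\iota}(\trop_X^{\an}(x))(m)=\phi_X(\trop_X^{\an}(x))(\iota^\sharp\chi^m)=\val(\iota^\sharp\chi^m(x))=\val(\chi^m(\iota^{\an}(x)))=\trop_T^{\an}(\iota^{\an}(x))(m).$$
For the ``in particular'' statement I will use that completeness of $X$ and the valuative criterion of properness give $X^\beth=X^{\an}$, so that $X_0^{\an}\cap X^\beth=X_0^{\an}$. By definition $|\Trop_\iota(X_0)|$ is then the image of $X_0^{\an}$ under $\trop_T^{\an}\circ\iota^{\an}$, and the commutative square rewrites this as $\phi_{X,\iota}(\trop_X^{\an}(X_0^{\an}))=\phi_{X,\iota}(|\Sigma_\iota(X)|)$, the last equality using surjectivity of $\trop_X^{\an}\colon X_0^{\an}\to\Sigma(X)$ (every $v\in\sigma$ is hit by a $\Spec R$-point of $X(\sigma)\cap X_0$ built from a local toric chart at a closed point of $\Orb(\sigma)$).

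The main obstacle is the very first step: extracting from Ulirsch's construction the concrete valuative description of $\trop_X^{\an}(x)$ on local boundary equations. Once this characterization is in hand, the rest of the argument is a formal chase through the definitions of $\phi_X$, the dual $f\colon\Lambvee\to M^X$, and $\trop_T^{\an}$.
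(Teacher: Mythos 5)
Your proposal is correct and follows essentially the same route as the paper: both verify commutativity pointwise using the Thuillier--Ulirsch description of $\trop_X^{\an}(x)$ via (minus log) absolute values of local equations of boundary divisors at the reduction $r(x)$, and both deduce the ``in particular'' statement from $X^\beth=X^{\an}$ for complete $X$ together with surjectivity of $\trop_X^{\an}$. The only difference is that you spell out (via the decomposition $\divv(g)|_{X(\sigma)}=D_+-D_-$ and the unit argument) the step the paper compresses into ``it follows that'', and you sketch surjectivity directly where the paper cites Thuillier.
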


\begin{proof}
Let $x\in X_0^{\an}\cap X^\beth$, and let $\sigma\in\Sigma_\iota(X)$ be the cone such that the reduction $r(x)$, that is the image of the closed point of the spectrum of a rank-$1$ valuation ring $R$ under a representation $\Spec R\rightarrow X$ of $x$, is contained in $O(\sigma)$. By construction of $\trop_X^{\an}$ (cf.\ \cite[p. 424]{Thu07}, \cite[Def.\ 6.1]{Uli13}), the point $\trop_X^{\an}(x)$ is contained in $\sigma$ and its pairing with a divisor $D\in M^\sigma$ is equal to $-\log|f(x)|$ for an equation $f$ for $D$ around $r(x)$. It follows that the pairing of $\phi_{X,\iota}(\trop_X^{\an}(x))$ with $m\in\Lambvee$ is equal to $-\log|\iota^\sharp\chi^m(x)|$. This is clearly equal to $\langle m\,,\, \trop_T^{\an}(\iota^{\an}(x))\rangle$. The ``in particular'' statement follows, since for complete $X$ we have $X^\beth=X^{\an}$, and the tropicalization map $\trop_X^{\an}$ is always surjective \cite[Prop. 3.11]{Thu07}.
\end{proof}

\begin{rem}
The set-theoretic equality of the ``in particular''-part of the previous lemma is an instance of geometric tropicalization, that is a situation where one can read off the tropicalization of a subvariety of an algebraic torus from the structure of the boundary of a suitable compactification. It has already been pointed out in \cite{LQ11} how the methods for geometric tropicalization developed in \cite{HKT09} can be used to obtain the equality $|\Trop_\iota(X_0)|=\phi_{X,\iota}(|\Sigma_\iota(X)|)$ for a complete toroidal embedding $X$.
\end{rem}

The tropicalization $|\Trop_\iota(X_0)|$ is a union of cones and hence the underlying set of arbitrarily fine embedded cone complexes, that is fans, in $\Lambda_\R$. Similarly as in Construction \ref{ITconstr:push-forwards} there exists such a fan $\Delta$ and a proper subdivision $\Sigma'$ of $\Sigma_\iota(X)$ such that $\phi_{X,\iota}$ induces a morphism $\Sigma'\rightarrow \Delta$ of weakly embedded cone complexes. Therefore, there is a push-forward morphism
\begin{equation*}
Z_*(\Sigma(X))\rightarrow Z_*(\Sigma_\iota(X))=Z_*(\Sigma')\rightarrow Z_*(\Delta)=Z_*(\Trop_\iota(X_0)),
\end{equation*}
where $Z_*(\Trop_\iota(X_0))$ denotes the group of affine tropical  cycles in $\Trop_\iota(X_0)$  \cite[Def.\ 2.15]{AR10}. Again as in Construction \ref{ITconstr:push-forwards} we see that this morphism is independent of the choices of $\Sigma'$ and $\Delta$, and we denote it by $(\phi_{X,\iota})_*$.

\begin{thm}
\label{RATthm:tropicalization of image is pushforward}
Let $X$ be a complete toroidal embedding, and let $\iota\colon X\rightarrow T$ be a morphism into the algebraic torus $T$. Then for every subvariety $Z$ of $X_0$ we have
\begin{equation}
\label{RATequn:tropicalization of image is pushforward}
(\phi_{X,\iota})_*\Trop_X(Z)=\Trop_\iota(Z),
\end{equation}
where $\Trop_\iota(Z)$ denotes the classical tropicalization of the cycle $\iota_*[Z]$ in $T$.
\end{thm}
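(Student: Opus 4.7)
The plan is to reduce this to the Sturmfels--Tevelev formula (Theorem \ref{RATthm:Push-forward and Tropicalization commute}) by extending $\iota$ to a toroidal morphism into a suitable toric compactification of $T$. First I would use the piecewise linearity of $\phi_{X,\iota}$ together with Lemma \ref{ITlem:fine proper subdivisions exist} to choose a complete fan $\Delta$ in $\Lambda_\R$ and a strictly simplicial proper subdivision $\Sigma'$ of $\Sigma(X)$ such that $\phi_{X,\iota}$ maps each cone of $\Sigma'$ onto a cone of $\Delta$, the set $\trop_X(Z)$ is a union of cones of $\Sigma'$, and $|\Trop_\iota(Z)|$ is a union of cones of $\Delta$ (the latter is possible by Lemma \ref{RATlem:tropicalization is given by image of cone complex}). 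Let $Y$ be the complete toric variety associated to $\Delta$, and $X' = X\times_\Sigma \Sigma'$ the corresponding toroidal modification.

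The next step would be to argue that $\iota$ extends to a toroidal morphism $\bar\iota\colon X'\to Y$. Since $X'_0 = X_0$, we have a rational map $X' \dashrightarrow Y$ agreeing with $\iota$ on $X'_0$; the combinatorial compatibility of $\phi_{X,\iota}$ with $\Delta$ ensures, via a local argument in toric charts analogous to \cite[Lemma 1.11]{AK00}, that the rational map is in fact everywhere defined and toroidal. By construction, $\Trop(\bar\iota)\colon \Sigma(X')\to \Sigma(Y)=\Delta$ coincides with the refinement of $\phi_{X,\iota}$ as a morphism of weakly embedded cone complexes.

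I would then apply Theorem \ref{RATthm:Push-forward and Tropicalization commute} to $\bar\iota$ and $[Z]$, obtaining
\begin{equation*}
\Trop(\bar\iota)_*\Trop_{X'}(Z) \;=\; \Trop_Y(\bar\iota_*[Z]).
\end{equation*}
The left hand side equals $(\phi_{X,\iota})_*\Trop_X(Z)$: the modification $X'\to X$ is an isomorphism on $X_0$, so $\Trop_{X'}(Z)$ represents the same tropical cycle as $\Trop_X(Z)$, and push-forward along $\Trop(\bar\iota)$ agrees with push-forward along $\phi_{X,\iota}$ after identifying $\Sigma(Y)$ with the embedded fan $\Delta\subseteq \Lambda_\R$ (Example \ref{ITexample: Toric variety and P2 without two hyperplanes}a). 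For the right hand side, I would observe that for a toric variety $Y$ the intrinsic toroidal tropicalization $\Trop_Y$ of a cycle coincides with its classical tropicalization, since the defining multiplicities $\mult_{\bar\iota_*[Z]}(\delta)=\deg([\overline{\bar\iota(Z)}']\cdot[\cOrb(\delta)])$ computed on a smooth toric refinement are exactly the classical Sturmfels--Tevelev multiplicities. Combined with $\bar\iota_*[Z]\cap T=\iota_*[Z]$, this yields $\Trop_Y(\bar\iota_*[Z])=\Trop_\iota(Z)$.

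The main obstacle will be the second step: verifying that the combinatorial hypothesis $\phi_{X,\iota}(\Sigma')\subseteq \Delta$ genuinely lifts the rational map to a toroidal morphism $X'\to Y$. While this is standard for toric $X$ (from the theory of toric varieties), the toroidal generalization requires working locally in formal toric charts of $X'$ and checking stratum by stratum that the image lands in the correct affine chart of $Y$. A secondary bookkeeping task is translating between the classical notion of tropical multiplicity on $\Delta$ and the scheme-theoretic definition used here for $\Trop_Y$, but both are computed via proper intersections with boundary strata of a smooth toric refinement and hence agree.
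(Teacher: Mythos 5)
Your overall strategy (extend $\iota$ over a toroidal modification $X'=X\times_\Sigma\Sigma'$ to a morphism into the toric variety $Y$ of a compatible fan $\Delta$, and then reduce to a Sturmfels--Tevelev type statement) is the right spirit, and the first and last bookkeeping steps are fine. But there is a genuine gap at the step you yourself flag as the ``main obstacle'': the extension $\bar\iota\colon X'\rightarrow Y$ is in general \emph{not} a toroidal morphism, and this is not a technical local verification that can be pushed through --- it is simply false in the cases the theorem is meant for. In the paper's definition, a toroidal morphism must factor as $X'\rightarrow\cOrb(\sigma)\rightarrow Y$ with the first map dominant toroidal, i.e.\ formally locally a toric morphism. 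When $\iota$ is (as in the intended applications, e.g.\ Corollary \ref{RATcor:weights on geometric tropicalization} and the $\Mnullnbar$ section) a closed immersion of $X_0$ into $T$ with $\dim X_0<\dim T$, the image of $X'$ is the closure of $\iota(X_0)$ in $Y$, which is not an orbit closure of $Y$, so no such factorization exists; and even when $\iota(X_0)$ is dense in $T$, dominant toroidal morphisms are formally locally toric (hence smooth at interior points in characteristic $0$), which a general $\iota$ is not. Consequently Theorem \ref{RATthm:Push-forward and Tropicalization commute} does not apply to $\bar\iota$, and indeed $\Trop(\bar\iota)$ is not even defined by the paper's machinery, so the displayed identity $\Trop(\bar\iota)_*\Trop_{X'}(Z)=\Trop_Y(\bar\iota_*[Z])$ has no justification as it stands.

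The paper circumvents exactly this point: it only extends $\iota$ to an \emph{ordinary} morphism $\iota'\colon X'\rightarrow Y$ (checking chart by chart that $\iota^\sharp\chi^m$ is regular on $X'(\sigma')$ when $\phi_{X,\iota}(\sigma')\subseteq\delta$ and $m\in\delta^\vee\cap\Lambvee$), observes that preimages of torus orbits are unions of strata, and then re-does the multiplicity computation directly rather than citing Theorem \ref{RATthm:Push-forward and Tropicalization commute}: the classical weight of $\Trop_\iota(Z)$ at $\delta$ is expressed as $\deg([\cOrb_Y(\delta)]\cdot\iota'_*[\overline Z'])$ via \cite[Lemma 2.3]{KP11}, rewritten by the projection formula as $\deg(\iota'^*D_1\dotsm\iota'^*D_d\cdot[\overline Z'])$ for the boundary divisors spanning the strictly simplicial cone $\delta$, and then the pullbacks $\iota'^*D_i$ are analyzed on $X'$ to produce exactly the lattice indices $[N^\delta:\phi_{X,\iota}(N^{\sigma'})]$ appearing in the definition of $(\phi_{X,\iota})_*$. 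If you want to keep your reduction-style argument you would have to prove such a push-forward formula for the non-toroidal morphism $\iota'$ anyway, which amounts to carrying out this computation; merely invoking the toroidal Sturmfels--Tevelev theorem does not suffice.
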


\begin{proof}
Since both sides of the equality are invariant under toroidal modifications, we may assume from the start that $\trop_X(Z)$ is a union of cones of $\Sigma(X)$. 
Let $d=\dim(Z)$. It follows from Lemma \ref{RATlem:tropicalization is given by image of cone complex} that both sides of (\ref{RATequn:tropicalization of image is pushforward}) vanish if the dimension of $\iota(Z)$ is strictly smaller than $d$. Hence we may assume that $\dim(\iota(Z))=d$.
Let $\Delta$ be a complete strictly simplicial fan in $\Lambda_\R$ such that $|\Trop_\iota(Z)|$, as well as $\phi_{X,\iota}(\sigma)$ for every $\sigma\in\Sigma_\iota(X)$, is a union of cones of $\Delta$. Similarly as described in Construction \ref{ITconstr:push-forwards} we obtain a simplicial proper subdivision $\Sigma'$ of $\Sigma_\iota(X)$ such that images of cones of $\Sigma'$ under $\phi_{X,\iota}$ are cones in $\Delta$. Namely, we can take suitable star-subdivisions of cones of the form $\phi_{X,\iota}^{-1}\delta\cap\sigma$ for $\delta\in\Delta$ and $\sigma\in\Sigma_\iota(X)$ along rays. Let $X'=X\times_{\Sigma(X)}\Sigma'$, and let $Y$ denote the toric variety associated to $\Delta$. Whenever $\delta\in\Delta$ and $\sigma'\in\Sigma'$ such that $\phi_{X,\iota}(\sigma')\subseteq \delta$, it follows form the definitions that $\divv(\iota^\sharp\chi^m)|_{X'(\sigma')}\in M_+^{\sigma'}$ for every $m\in \delta^\vee\cap \Lambvee$. In particular, $\iota^\sharp \chi^m$ is regular on $X'(\sigma')$, showing that $\iota$ extends to a morphism $X'(\sigma')\rightarrow U_\delta$, where $U_\delta$ is the affine toric variety associated to $U_\delta$. These morphisms glue and give rise to an extension $\iota'\colon X'\rightarrow Y$ of $\iota$. If $\sigma'\in\Sigma'$, and $\delta\in\Delta$ is minimal among the cones of $\Delta$ containing $\phi_{X,\iota}(\sigma')$, then  $\iota'(\Orb_{X'}(\sigma'))$ is contained in $\Orb_Y(\delta)$. Therefore, the preimages of torus orbits in $Y$ are unions of strata of $X$. Let $\delta\in\Delta_{(d)}$. By \cite[Lemma 2.3]{KP11}, the weight of $\Trop_\iota(Z)$ at $\delta$ is equal to $\deg([\cOrb_Y(\delta)]\cdot\iota'_*[\overline Z']) $, where we denote by $\overline Z'$ the closure of $Z$ in $X'$. Writing $D_1,\dots, D_d$ for the boundary divisors associated to the rays $\rho_1,\dotsc,\rho_d$ of $\delta$, this can be written as
\begin{equation*}
\deg(D_1\dotsm D_d\cdot \iota'_*[\overline Z'])=
\deg(\iota'^*D_1\dotsm\iota'^*D_d\cdot [\overline Z'])
\end{equation*}
by the projection formula. By the combinatorics of $\iota'$, the support of $\iota'^*D_i$ is the union of all strata $\Orb_{X'}(\sigma')$ associated to cones $\sigma'\in\Sigma'$ containing a ray which is mapped onto $\rho_i$. Hence, the intersection $|\iota'^*D_1|\cap\dotsc\cap|\iota'^*D_d|$ has pure codimension $d$ with components $\cOrb_{X'}(\sigma')$ corresponding to the $d$-dimensional cones of $\Sigma'$ mapping onto $\delta$. Let $\sigma'$ be such a cone. The  restriction $(\iota'^*D_i)|_{X'(\sigma')}$ is equal to $[N^\rho:\phi_{X,\iota}(N^{\rho'_i})]\cdot\cOrb_{X'}(\rho'_i)$, where $\rho'_i$ is the ray of $\sigma'$ mapping onto $\rho_i$. By comparing with a local toric model, we see that $\cOrb_{X'}(\rho'_1)\dotsm \cOrb_{X'}(\rho'_d)=\mult(\sigma')^{-1}[\cOrb_{X'}(\sigma')]$ \cite[Lemma 12.5.2]{CLS}, where the multiplicity $\mult(\sigma')$ is the index of the sublattice of $N^{\sigma'}$ generated by the primitive generators of the rays $\rho'_1,\dotsc,\rho'_d$ of $\sigma'$. Thus, the multiplicity of $[\cOrb_{X'}(\sigma')]$ in $\iota'^*D_1\dotsm\iota'^*D_d$ is equal to
\begin{equation*}
\frac 1{\mult(\sigma')}\prod_{i=1}^d[N^\rho:\phi_{X,\iota}(N^{\rho'_i})].
\end{equation*}
We easily convince ourselves that this is nothing but the index $[N^\delta:\phi_{X,\iota}(N^{\sigma'})]$. It follows that the weight of $\Trop_\iota(Z)$ at $\delta$ is equal to 
\begin{equation*}
\sum_{\sigma'\mapsto \delta}[N^\delta:\phi_{X,\iota}(N^{\sigma'})]\deg([\cOrb_{X'}(\sigma')]\cdot [\overline Z']),
\end{equation*}
where the sum is taken over all $d$-dimensional cones $\sigma'\in\Sigma'$ which are mapped onto $\delta$ by $\phi_{X,\iota}$. Since $\deg([\cOrb_{X'}(\sigma')]\cdot [\overline Z'])$ is the weight of $\Trop_X(Z)$ at $\sigma'$ by definition, the desired equality follows from the construction of the push-forward.
\end{proof}

\begin{cor}
\label{RATcor:weights on geometric tropicalization}
Let $X$ be a complete toroidal embedding, and let $\iota\colon X\rightarrow T$ be a closed immersion into the algebraic torus $T$. Then the weight of $\Trop_\iota(X_0)$ at a generic point $w$ of its support $\phi_{X,\iota}|\Sigma(X)|$ is equal to
\begin{equation*}
\sum_\sigma \ind{\phi_{X,\iota}(N^\sigma)},
\end{equation*}
where the sum is taken over all $\dim(X)$-dimensional cones of $\Sigma(X)$ whose image under $\phi_{X,\iota}$ contains $w$.
\end{cor}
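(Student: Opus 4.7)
The plan is to derive the corollary as a direct consequence of Theorem \ref{RATthm:tropicalization of image is pushforward} applied to the subvariety $Z=X_0\subseteq X_0$, whose tropicalization can be computed by hand.

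First I would unpack the definition of $\Trop_X(X_0)=\Trop_X[X]$. The closure of $X_0$ in any smooth toroidal modification $X'$ of $X$ is $X'$ itself, so $\overline{X_0}'=X'$ automatically intersects every stratum properly. For an $n$-dimensional cone $\sigma\in\Sigma(X)$ (where $n=\dim X$) the associated stratum $\Orb(\sigma)$ has dimension $0$ and is therefore a closed point, so $\deg([X']\cdot [\cOrb(\sigma)])=1$. This shows that $\Trop_X(X_0)$ is represented by the Minkowski weight on $\Sigma(X)$ that assigns the weight $1$ to every $n$-dimensional cone.

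Next I would apply Theorem \ref{RATthm:tropicalization of image is pushforward}, which gives $\Trop_\iota(X_0)=(\phi_{X,\iota})_*\Trop_X(X_0)$. To compute the weight of the right-hand side at $w$, I would fix a complete fan $\Delta$ in $\Lambda_\R$ with $|\Trop_\iota(X_0)|$ a union of its cones, and a proper subdivision $\Sigma'$ of $\Sigma_\iota(X)$ such that $\phi_{X,\iota}$ sends cones of $\Sigma'$ onto cones of $\Delta$ (this is precisely the setup of Construction \ref{ITconstr:push-forwards}, carried out in the proof of Theorem \ref{RATthm:tropicalization of image is pushforward}). Let $\delta$ be the cone of $\Delta$ whose relative interior contains $w$; by genericity $\delta$ is $n$-dimensional. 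By the pushforward formula the weight of $\Trop_\iota(X_0)$ at $\delta$ equals
\begin{equation*}
\sum_{\sigma'\in \Sigma'_{(n)},\,\phi_{X,\iota}(\sigma')=\delta}[N^\delta:\phi_{X,\iota}(N^{\sigma'})],
\end{equation*}
where the weight from $\Trop_X(X_0)$ on each $\sigma'$ is $1$ by the previous paragraph.

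The last step is a bookkeeping argument to rewrite this sum over $\Sigma'$ as a sum over $\Sigma(X)$. Every $n$-dimensional cone $\sigma'\in\Sigma'$ with $\phi_{X,\iota}(\sigma')=\delta$ is a full-dimensional subcone of a unique $n$-dimensional cone $\sigma\in\Sigma(X)$ whose image under $\phi_{X,\iota}$ contains $w$; conversely, for each such $\sigma$ the subdivision of $\sigma$ induced by $\Delta$ contains exactly one $n$-dimensional cone mapping onto $\delta$, because $\phi_{X,\iota}|_\sigma$ is injective (its image $\phi_{X,\iota}(\sigma)$ is $n$-dimensional, containing the generic point $w$). Since $\sigma'\subseteq\sigma$ is a full-dimensional subcone, $N^{\sigma'}=N^\sigma$ and hence $\phi_{X,\iota}(N^{\sigma'})=\phi_{X,\iota}(N^\sigma)$. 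Finally, $N^\delta=\Lambda\cap\Lin(\delta)$ is the saturation of $\phi_{X,\iota}(N^\sigma)$ in $\Lambda$, so $[N^\delta:\phi_{X,\iota}(N^\sigma)]=\ind{\phi_{X,\iota}(N^\sigma)}$. Combining these identifications yields the claimed formula. No serious obstacle is expected; the only point requiring care is the correspondence between top-dimensional cones of the subdivision $\Sigma'$ and those of $\Sigma(X)$, and the matching of their lattices.
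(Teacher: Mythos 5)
Your argument is correct and follows exactly the route the paper intends: the corollary is stated without a separate proof precisely because it is the specialization of Theorem \ref{RATthm:tropicalization of image is pushforward} to $Z=X_0$, where $\Trop_X(X_0)$ has weight $1$ on all maximal cones and the push-forward formula unwinds to the stated sum of lattice indices. Your bookkeeping (unique $n$-dimensional $\sigma\supseteq\sigma'$ with $N^{\sigma'}=N^\sigma$, injectivity of $\phi_{X,\iota}|_\sigma$ for generic $w$, and $N^\delta$ being the saturation of $\phi_{X,\iota}(N^\sigma)$ in $\Lambda$) is exactly the verification needed, so there is nothing to add.
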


\begin{rem}
Corollary \ref{RATcor:weights on geometric tropicalization} can also be proven using Cueto's multiplicity formula for geometric tropicalization \cite[Thm. 2.5]{Cue12}.
\end{rem}

\section{Applications}
\label{Asec}

In this section we show how to apply our methods to obtain classical/tropical correspondences for invariants on the moduli spaces of rational stable curves and genus $0$ logarithmic stable maps to toric varieties. We begin by showing that the tropicalizations of $\psi$-classes on $\Mnullnbar$ recover the tropical $\psi$-classes on the tropical moduli space $\Mnullntrop$. These have been defined in \cite{Mik07} as tropical cycles, and in \cite{MK09} and \cite{Rau08} as tropical Cartier divisors on  $\Mnullntrop$. We recall that the moduli space $\Mnullntrop$ of $n$-marked rational tropical curves consists of all (isomorphism classes of) metric trees with exactly $n$ unbounded edges with markings in $[n]\coloneqq \{1,\dotsc, n \}$ (cf.\ \cite{GKM09}). To make it accessible to the intersection-theoretic methods of \cite{AR10} it is usually considered as a tropical variety embedded in $\R^{\binom n2}/\Phi(\R^n)$. Here, $\Phi$ is the morphism sending the $i$-th unit vector $e_i\in\R^n$ to $\sum_{j\neq i} e_{\{i,j\}}$, where we identify the coordinates of $\R^{\binom n2}$ with two-element subsets of $[n]$. Up to a factor, the embedding usually used is given by 
\begin{equation*}
\Mnullntrop\rightarrow \R^{\binom n2} /\Phi(\R^n),\;\; \Gamma\mapsto -\sum_{i,\,j} \frac{\dist_{ij}(\Gamma)}2 e_{\{i,j\}},
\end{equation*}
 where $\dist_{ij}(\Gamma)$ denotes the distance of the vertices of the two unbounded edges of $\Gamma$ marked by $i$ and $j$. We refer to \cite[Section 3.1]{G14} for a justification of the factor $-1/2$. The image under this embedding has a canonical fan structure whose cones are the closures of the images of all tropical curves of fixed combinatorial type \cite[Thm.\ 4.2]{SS04}. From now on $\Mnullntrop$ will denote this fan. The dimension of a cone in $\Mnullntrop$ is equal to the number of bounded edges in the corresponding combinatorial type. In particular, its rays correspond to the combinatorial types with exactly one unbounded edge. These are determined by the markings on one of its vertices. For $I\subseteq [n]$ with $2\leq |I|  \leq n$ we denote by $v_I =v_{I^c}$ the primitive generator of the ray of $\Mnullntrop$ corresponding to $I$. It is equal to the image in $\R^{\binom n2}/\Phi(\R^n)$ of the tropical curve of the corresponding combinatorial type whose bounded edge has length one, that is
\begin{equation*}
v_I=- \frac 12 \sum_{i\in I,\, j\notin I}e_{\{i,j\}}.
\end{equation*}

The algebraic moduli space $\Mnullnbar$ parametrizes  rational stable  curves, that is trees of $\Pj^1$'s with $n$ pairwise different non-singular marked points. The irreducible curves form an open subset $\Mnulln\subseteq \Mnullnbar$, and since the boundary has simple normal crossings, this defines a toroidal embedding \cite[Prop.\ 4.7]{Thu07}. The strata, and hence the cones in $\Sigma(\Mnullnbar)$, are in natural bijection with the cones in $\Mnullntrop$: for two curves in $\Mnullnbar$ the dual graph construction, which replaces the components of a stable curve by nodes, their intersection points by edges, and marked points by unbounded edges with the respective marks, yields the same combinatorial type of tropical curves  if and only if they belong to the same stratum. The bijection between the cones of $\Sigma(X)$ and those of $\Mnullntrop$ even preserves dimension. In particular, every boundary divisor of $\Mnullnbar$ is equal to the divisor $D_I$ corresponding to some subset $I\subseteq [n]$ with $2\leq |I| \leq n$.

To further strengthen the connection between $\Sigma(\Mnullnbar)$ and $\Mnullntrop$ we consider the embedding
\begin{equation*}
\iota\colon\Mnulln \rightarrow \G_m^{\binom n2}/\G_m^n
\end{equation*}
given by the composite of the Gelfand-MacPherson correspondence $\operatorname{G}^0(2,n)/\G_m^n\cong \Mnulln$, where $\operatorname{G}^0(2,n)$  is the open subset of the Grassmannian with non-vanishing Plücker coordinates and $\G^n_m$ acts on it by dilating the coordinates, with the Plücker embedding  $\operatorname{G}^0(2,n)/\G_m^n\rightarrow \G_m^{\binom n2}/\G_m^n$ (modulo $\G_m^n$). The weak embedding of $\Sigma_\iota(\Mnullnbar)$ maps $|\Sigma(\Mnullnbar)|$ into $\R^{\binom n2}/\Phi(\R^n)$. Its image can be computed explicitly. It follows from \cite[Prop.\ 6.5.14]{TropBook} that the image of the primitive generator of the ray corresponding to $n\notin I\subseteq [n]$ is equal to 
\begin{equation*}
\sum_{\{i,j\}\subseteq I} e_{\{i,j\}} = \sum_{\{i,j\}\subseteq I} e_{\{i,j\}}-\frac 12 \Phi\bigg(\sum_{i\in I} e_i\bigg) = v_I
\end{equation*}
This shows that $\phi_{\Mnullnbar,\iota}$ maps the rays of $\Sigma_\iota(\Mnullnbar)$ isomorphically onto those of $\Mnullntrop$ in accordance with the bijection of cones from above. Since both cone complexes are strictly simplicial, and the rays corresponding to subsets $I_1,\dotsc, I_k$ of $[n]$ span a cone in $\Sigma_\iota(\Mnullnbar)$ if and only if they do so in $\Mnullntrop$, it follows that $\phi_{\Mnullntrop,\iota}$ induces an isomorphism $\Sigma_\iota(\Mnullnbar)\cong \Mnullntrop$ of weakly embedded cone complexes. Note that by identifying $\Mnulln$ with the complement of the union of the hyperplanes $\{x_i=x_j\}$ in $\G_m^{n-3}$ it is easy to see that we also have $\Sigma(\Mnullnbar)\cong \Sigma_\iota(\Mnullnbar)$.

In addition to the boundary divisors, there are $n$ more natural cohomology classes in $A^1(\Mnullnbar)$: the $\psi$-classes. The $k$-th $\psi$-class $\psi_k$ is defined as the first Chern class of the $k$-th cotangent line bundle on $\Mnullnbar$. We refer to \cite{psinotes} for details. Mimicking this definition, Mikhalkin has defined tropical $\psi$-classes on $\Mnullntrop$ as tropical cycles of codimension $1$ \cite{Mik07}. They have been described by Kerber and Markwig \cite{MK09} as tropical $\Q$-Cartier divisors in our sense. More precisely, they showed that Mikhalkin's  $k$-th $\psi$-class is the associated cycle of the tropical Cartier divisor $\psi_k^{\trop}$ which they defined to be the unique divisor satisfying
\begin{equation*}
\psi_k^{\trop}(v_I)=\frac{|I|(|I|-1)}{(n-1)(n-2)}
\end{equation*}
for all $I\subseteq [n]$ with $2\leq |I| \leq n-2$ and $k\notin I$. They also computed the top-dimensional intersections of these tropical $\psi$-classes explicitly and obtained that they equal their classical counterparts. A more conceptual proof of these equalities has been given by Katz \cite{Katz12} by considering $\Mnullnbar$ with its embedding into the toric variety $Y(\Mnullntrop)$ associated to the fan $\Mnullntrop$ and using the tropical description of the Chow cohomology ring of toric varieties. The following proposition strengthens this correspondence by showing that the tropical $\psi$-classes are in fact the tropicalizations of the classical ones. As a consequence we will also obtain a correspondence for intersections of $\psi$-classes of lower codimension. For the statement to make sense we consider  the classical $\psi$-classes as elements in $\Pic(\Mnullnbar)=A^1(\Mnullnbar)$ and the tropical ones as classes in $\ClTLT(\Mnullntrop)=\ClTLT(\Sigma(\Mnullnbar))$.

\begin{prop}
\label{Athm:Psi classes tropicalize to psi classes}
For every $1\leq k\leq n$ we have $\psi_k\in\ClTLT(\Mnullnbar)$ and
\begin{equation*}
\Trop_{\Mnullnbar}(\psi_k)=\psi_k^{\trop}.
\end{equation*}
In particular, for arbitrary natural numbers $a_1,\dotsc, a_n$ we have
\begin{equation*}
\Trop_{\Mnullnbar}\left(\prod_{k=1}^n\psi_k^{a_k}\cdot[\Mnullnbar]\right)=\prod_{k=1}^n(\psi_k^{\trop})^{a_k}\cdot [\Mnullnbartrop]\quad \text{in } A_*(\Mnullnbartrop),
\end{equation*}
where $[\Mnullnbartrop]=\Trop_{\Mnullnbar}([\Mnullnbar])$ is the  tropical cycle on $\Mnullnbartrop$ with weight $1$ on all maximal cones, and the tropical intersection product is that of Definition \ref{ITdefn:iterated intersections}.
\end{prop}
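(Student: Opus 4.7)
The plan has two stages, corresponding to the two assertions of the proposition. For the first assertion, my approach is to exploit the identification $\Sigma(\Mnullnbar) \cong \Sigma_\iota(\Mnullnbar) \cong \Mnullnbartrop$ of embedded cone complexes established in the paragraph preceding the proposition. This means that $\iota$ extends to a toroidal morphism $\overline\iota\colon \Mnullnbar \to Y(\Mnullnbartrop)$ into the toric variety associated to $\Mnullnbartrop$, and that $\Trop(\overline\iota)$ agrees with the embedding $\phi_{\Mnullnbar,\iota}$. Since every toric Cartier divisor class is CP with tropicalization equal to the PL function defining it, Lemma \ref{RATlem: pullback of divisor to stratum} reduces the task to exhibiting a toric Cartier divisor class $\widetilde\psi_k$ on $Y(\Mnullnbartrop)$ such that $\overline\iota^*\widetilde\psi_k = \psi_k$ in $\Pic(\Mnullnbar)$ and whose associated PL function $\chi_k$ on $|\Mnullnbartrop|$ represents the class $\psi_k^{\trop}\in\ClTLT(\Mnullnbartrop)$.

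To produce such a $\widetilde\psi_k$, I would invoke the classical expression of $\psi_k$ as a linear combination of boundary divisors $D_I$, valid for any choice of two auxiliary marks in $[n]\setminus\{k\}$. Under the identification above, each $D_I$ is the pullback along $\overline\iota$ of the toric boundary divisor corresponding to the ray $\R\gz v_I$, so the same linear combination of toric boundary divisors on $Y(\Mnullnbartrop)$ provides the desired $\widetilde\psi_k$. Its PL function $\chi_k$ then has an explicit combinatorial description on the primitive ray generators $v_I$. The final step is to verify that $[\chi_k]=[\psi_k^{\trop}]$ in $\ClTLT(\Mnullnbartrop)$, i.e.\ that $\chi_k - \psi_k^{\trop}$ is the restriction to $|\Mnullnbartrop|$ of an element of $M^{\Mnullnbartrop}$. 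This amounts to a finite combinatorial identity relating the indicator function of the chosen index set to $|I|(|I|-1)/((n-1)(n-2))$, and can be made transparent by averaging over the choice of auxiliary marks.

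For the ``in particular'' statement, since $\Mnullnbar$ is smooth and $\Mnulln$ is open, irreducible, and of the same dimension $n-3$, the definition of the tropicalization of cycles immediately yields $\Trop_{\Mnullnbar}([\Mnullnbar]) = [\Mnullnbartrop]$ with weight $1$ on every maximal cone. The claimed equality in $A_*(\Mnullnbartrop)$ then follows by induction on $\sum_k a_k$, applying Proposition \ref{RATprop:intersection of cycle class with divisor commutes with tropicalization} at each step in accordance with Definition \ref{ITdefn:iterated intersections}.

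The main obstacle will be the combinatorial identification of $\chi_k$ with $\psi_k^{\trop}$ modulo $M^{\Mnullnbartrop}$; everything else reduces to formal applications of results already established in the paper.
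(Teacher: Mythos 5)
Your skeleton (write $\psi_k$ as a sum of boundary divisors, identify the resulting piecewise linear function with $\psi_k^{\trop}$ up to linear functions, then get the ``in particular'' part from Proposition \ref{RATprop:intersection of cycle class with divisor commutes with tropicalization}) is the paper's argument, and your treatment of the second half is correct. The gap is in how you justify the first half by routing it through the toric variety $Y$ associated to the fan $\Mnullntrop$. The extension $\overline\iota\colon\Mnullnbar\rightarrow Y$ is a closed immersion whose image is not the closure of a stratum of $Y$, so it is \emph{not} a toroidal morphism in the sense of this paper (dominant toroidal followed by the inclusion of an orbit closure). Consequently Lemma \ref{RATlem: pullback of divisor to stratum}, which is proved only for such morphisms, does not apply to $\overline\iota$, and $\Trop(\overline\iota)$ is not even defined by the paper's constructions; likewise the existence of $\overline\iota$ and the identity $\overline\iota^*\widetilde D_I=D_I$ for the toric boundary divisors are asserted rather than available as statements in the paper (they can be extracted from the order-of-vanishing argument inside the proof of Theorem \ref{RATthm:tropicalization of image is pushforward}, but you would have to supply that argument). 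Moreover the detour buys you nothing: once $\psi_k=\sum_{k\in I;\,a,b\notin I}D_I$ in $\Pic(\Mnullnbar)$ is invoked, the definition of $\Trop_{\Mnullnbar}$ on $\ClTLT(\Mnullnbar)$ directly produces the representing function $\sum\phi_I$, where $\phi_I$ has value $1$ at $v_I$ and $0$ at all other rays; this is combinatorially principal because $\Sigma(\Mnullnbar)\cong\Mnullntrop$ is embedded and strictly simplicial.

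The remaining step, which you correctly identify as the main obstacle, is not yet closed by your averaging argument. Averaging over the auxiliary pair $\{a,b\}$ does reproduce the values $|I|(|I|-1)/((n-1)(n-2))$ on the rays, hence recovers $\psi_k^{\trop}$ as a piecewise linear function; but to conclude that any \emph{single} choice of $\{a,b\}$ represents the same class you still need that the expressions for different auxiliary pairs differ by elements of $M^{\Sigma(\Mnullnbar)}$, i.e.\ the tropical cross-ratio linear equivalences (e.g.\ obtained from the linear forgetful map to $\Mnullntrop[4]$ together with $v_{\{1,2\}}+v_{\{1,3\}}+v_{\{1,4\}}=0$). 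That is exactly the content of \cite[Lemma 2.24]{Rau08}, which the paper cites at this point; either cite it or prove those equivalences explicitly.
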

\begin{proof}

For the first part of the statement we may assume $k=1$, the other cases follow by symmetry. 
By \cite[1.5.2]{psinotes} the $\psi$-class $\psi_1$ is a sum of boundary divisors, namely
\begin{equation*}
\psi_1= \sum_{1\in I;\, 2,3\in J} D_I  \quad \text{in } \Pic(\Mnullnbar),
\end{equation*}
where the sum runs over subsets $I\subseteq [n]$ with $2\leq |I| \leq n-2$ which satisfy the given condition. Hence we have $\psi_1\in\ClTLT(\Mnullnbar)$, and its tropicalization is represented by 
\begin{equation*}
\sum_{1\in I;\, 2,3\in J} \phi_I,
\end{equation*}
where $\phi_I$ is the Cartier divisor on $\Mnullntrop$ which has value $1$ at $v_I$ and $0$ at $v_J$ for all $I\neq J\neq I^c$. By \cite[Lemma 2.24]{Rau08} this is equal to $\psi^{\trop}_1$ in $\ClTLT(\Mnullntrop)$. The last part of the statement is an immediate consequence of Proposition \ref{RATprop:intersection of cycle class with divisor commutes with tropicalization}.
\end{proof}

In \cite{MR08}, Markwig and Rau defined genus $0$ tropical descendant Gromov-Witten invariants of $\R^r$ by substituting tropical for algebraic objects in the classical definition of genus $0$ descendant Gromov-Witten invariants of toric varieties. In this definition, the tropical moduli space chosen to replace the moduli space of rational stable maps is the moduli space of labeled parametrized tropical curves in $\R^r$, which has been introduced in \cite{GKM09}. Tropical and classical invariants are known to be equal in special cases, but different in general. This reflects the fact that the moduli spaces of labeled parametrized tropical curves are related, yet not completely analogous to the moduli spaces of stable maps. To make this more precise, let us recall their definition. Let $n$ and $m$ be positive natural numbers and $\Delta$ an $m$-tuple of vectors in $\Z^r$ summing to $0$. Then the moduli space $\Mnullnlabtrop(\R^r,\Delta)$ consists of (isomorphism classes of) $(n+m)$-marked tropical rational curves together with continuous maps into $\R^r$ which have rational slopes on the edges, satisfy the balancing condition, contract the first $n$ marked unbounded edges, and have slope $\Delta_k$ on the $k$-th of the remaining $m$ unbounded edges (cf.\ \cite{GKM09} for details). The crucial difference to the moduli spaces of stable maps are the last $m$ markings. Their algebraic meaning is the expected number of points on an algebraic curve of specified degree mapping to the toric boundary. 

Let $\Sigma$ be a complete fan in $\R^r$ such that the elements of $\Delta$ are contained in the rays of $\Sigma$, and let $Y$ be its associated toric variety. Denote by $\Mnullnlab(Y,\Delta)$ (where $\Mnullnbarlab$ stands for rational stable maps) the set of (isomorphism classes of) maps from an $(n+m)$-marked  $\Pj^1$ to $Y$ such that the points  mapping to the boundary of $Y$ are precisely the last $m$ marked points. Moreover, we require that the $k$-th of these $m$ points is mapped to $\Orb(\R\gz\Delta_k)$, and the multiplicity with which it intersects the boundary is the index of $\Z\Delta_k$ in its saturation. As explained in detail in \cite[Section 5.2]{G14} every element in $\Mnullnlab(Y,\Delta)$ is uniquely determined by its underlying marked curve together with the image of the first marked point. Hence, we can identify $\Mnullnlab(Y,\Delta)$ with $\Mnulln[n+m]\times \G_m^r$ (cf.\ \cite[Prop.\ 3.3.3]{Ranga15}). There is a modular compactification $\Mnullnbarlab(Y,\Delta)$ of $\Mnullnlab(Y,\Delta)$, whose points correspond to genus $0$ logarithmic stable maps to $Y$. We refer to \cite{Chen14,AC14,GS13} for details. The embedding $\Mnullnlab(Y,\Delta)\rightarrow \Mnullnbarlab(Y,\Delta)$ is toroidal. In fact, if $\mu\colon\Mnullnbarlab(Y,\Delta)\rightarrow \Mnullnbar[n+m]$ denotes the morphism which forgets the map, and $\ev_i\colon\Mnullnbarlab(Y,\Delta)\rightarrow Y$ denotes the evaluation map at the $i$-th point, then $\mu\times\ev_1$ is a toroidal modification by \cite[Thm.\ 4.2.4]{Ranga15}. Hence, the weakly embedded cone complex $\Sigma(\Mnullnbarlab(Y,\Delta))$ is a proper subdivision of $\Mnullntrop[n+m]\times\Delta$. Since rational labeled parametrized tropical curves are also determined by their underlying marked (tropical) curve and the image of the first marked point \cite[Prop.\ 4.7]{GKM09}, the tropical moduli space $\Mnullnlabtrop(\R^r,\Delta)$ can be identified with $|\Sigma(\Mnullnbarlab(Y,\Delta))|$. For this reason we will use $\Mnullnlabtrop(\R^r,\Delta)$ to denote $\Sigma(\Mnullnbarlab(Y,\Delta))$. For a description of its cones we refer to \cite{Ranga15}. 

By construction, the tropicalization $\Trop(\mu)$ of the forgetful map is equal to the tropical forgetful map $\mu^{\trop}\colon \Mnullnlabtrop(\R^r,\Delta)\rightarrow \Mnullntrop[n+m]$. Similarly, the tropicalization $\Trop(\ev_i)$ of the $i$-th evaluation map is equal to the $i$-th tropical evaluation map $\ev_i^{\trop}$ by \cite[Prop.\ 5.0.2]{Ranga15}. In accordance with \cite{MR08} we define the $k$-th $\psi$-class $\hat\psi_k$ (resp.\ $\hat\psi_k^{\trop}$) on $\Mnullnbarlab(Y,\Delta)$ (resp.\ $\Mnullnbarlabtrop(\R^r,\Delta)$) as the pullback via $\mu$ (resp.\ $\mu^{\trop}$) of the $k$-th $\psi$-class $\psi_k$ (resp.\ $\psi_k^{\trop}$) on $\Mnullnbar[n+m]$ (resp.\ $\Mnullnbartrop[n+m]$). As a direct consequence of our previous results we obtain the following correspondence theorem.

\pagebreak

\begin{cor}
Let $a_1,\dotsc, a_n, b_1, \dotsc b_n$ be natural numbers, and for $1\leq i\leq n$ and $1\leq j\leq b_i$ let $D_{ij}\in\Pic(Y)=\ClTLT(Y)$. Then we have the equality
\begin{multline*}
\Trop_{\Mnullnbarlab(Y,\Delta) }\left(\prod_{i=1}^n\hat\psi_i^{a_i}\prod_{j=1}^{b_i}\ev_i^*(D_{ij})\cdot [\Mnullnbarlab(Y,\Delta)]\right)=\\
= \prod_{i=1}^n(\hat\psi_i^{\trop})^{a_i}\prod_{j=1}^{b_i}(\ev_i^{\trop})^*(\Trop_Y(D_{ij}))\cdot [\Mnullnbarlabtrop(\R^r,\Delta)]
\end{multline*}
of cycle classes in $A_*(\Mnullnbarlabtrop(\R^r,\Delta))$, where $[\Mnullnbarlabtrop(\R^r,\Delta)]=\Trop_{\Mnullnbarlab(Y,\Delta)}([\Mnullnbarlab(Y,\Delta)]\big)$ is the cycle on $\Mnullnbarlabtrop(\R^r,\Delta)$ with weight $1$ on all maximal cones.
\end{cor}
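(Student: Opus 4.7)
The plan is to reduce the statement to a direct iteration of Proposition \ref{RATprop:intersection of cycle class with divisor commutes with tropicalization}, after first checking that every divisor class appearing on the left is combinatorially principal and identifying its tropicalization.

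First I would verify the ``\tlt{}''-condition and compute individual tropicalizations. By Proposition \ref{Athm:Psi classes tropicalize to psi classes}, each $\psi_k\in \ClTLT(\Mnullnbar[n+m])$ with $\Trop(\psi_k)=\psi_k^{\trop}$. Since $\mu$ is a toroidal morphism, Lemma \ref{RATlem: pullback of divisor to stratum} applies: $\hat\psi_i=\mu^*\psi_i\in\ClTLT(\Mnullnbarlab(Y,\Delta))$ and
\begin{equation*}
\Trop(\hat\psi_i)=\Trop(\mu)^*\Trop(\psi_i)=(\mu^{\trop})^*\psi_i^{\trop}=\hat\psi_i^{\trop}.
\end{equation*}
On the toric side, every Cartier divisor on $Y$ is combinatorially principal (since $Y$ is toric, its combinatorial open subsets are affine toric and hence have trivial Picard group), so $D_{ij}\in\ClTLT(Y)$. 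Applying Lemma \ref{RATlem: pullback of divisor to stratum} to the toroidal morphism $\ev_i$ yields $\ev_i^*(D_{ij})\in\ClTLT(\Mnullnbarlab(Y,\Delta))$ and
\begin{equation*}
\Trop(\ev_i^*(D_{ij}))=\Trop(\ev_i)^*\Trop_Y(D_{ij})=(\ev_i^{\trop})^*\Trop_Y(D_{ij}).
\end{equation*}

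Next I would handle the fundamental class. By Proposition \ref{RATprop:tropicalization of cocycle is balanced}, tropicalizing the unit cocycle $1\in A^0(\Mnullnbarlab(Y,\Delta))$ yields the Minkowski weight whose value at a maximal cone $\sigma$ of $\Mnullnbarlabtrop(\R^r,\Delta)$ is $\deg([\cOrb(\sigma)])=1$, since $\cOrb(\sigma)$ is a single closed point in the 0-dimensional stratum. Under Poincaré duality on a smooth toroidal modification this is exactly the tropical cycle class $\Trop([\Mnullnbarlab(Y,\Delta)])$ (and $\Mnullnbarlab(Y,\Delta)$ is already smooth by \cite[Thm.\ 4.2.4]{Ranga15}), giving $[\Mnullnbarlabtrop(\R^r,\Delta)]$ with weight $1$ on every maximal cone, as claimed.

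Finally, I would put everything together by induction on the total number of divisor factors. At each step Proposition \ref{RATprop:intersection of cycle class with divisor commutes with tropicalization} lets me pull one $\hat\psi_i$ or $\ev_i^*(D_{ij})$ out past $\Trop$:
\begin{equation*}
\Trop\bigl(c_1(\mathcal L)\cap \alpha\bigr)=\Trop(\mathcal L)\cdot \Trop(\alpha),
\end{equation*}
and the two previous paragraphs identify the resulting factors on the right-hand side. Iterating until no divisor factors remain reduces the claim to the identification of $\Trop([\Mnullnbarlab(Y,\Delta)])$ with $[\Mnullnbarlabtrop(\R^r,\Delta)]$, completing the proof. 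The only subtle point is the verification that $\hat\psi_i\in\ClTLT$ and the computation of its tropicalization; this is the crux, and it is precisely what Proposition \ref{Athm:Psi classes tropicalize to psi classes} combined with Lemma \ref{RATlem: pullback of divisor to stratum} provides. Everything else is a formal bookkeeping of Proposition \ref{RATprop:intersection of cycle class with divisor commutes with tropicalization}.
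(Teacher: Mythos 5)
Your argument is correct and is essentially the paper's own proof: the corollary is presented there as a direct consequence of Proposition \ref{Athm:Psi classes tropicalize to psi classes}, Lemma \ref{RATlem: pullback of divisor to stratum} applied to the toroidal morphisms $\mu$ and $\ev_i$ (together with the identifications $\Trop(\mu)=\mu^{\trop}$ and $\Trop(\ev_i)=\ev_i^{\trop}$ from Ranganathan), and an iteration of Proposition \ref{RATprop:intersection of cycle class with divisor commutes with tropicalization}, which is exactly the reduction you carry out. One minor caveat: you neither need, nor does Ranganathan's theorem in general provide, smoothness of $\Mnullnbarlab(Y,\Delta)$ for the fundamental-class step, since $\Trop_{\Mnullnbarlab(Y,\Delta)}\big([\Mnullnbarlab(Y,\Delta)]\big)$ is by definition computed on a strictly simplicial toroidal modification, where every maximal cone corresponds to a reduced point stratum and the weight $1$ on all maximal cones follows.
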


\begin{rem}
In case we take top-dimensional intersections with $Y=\Pj^r$, the tropical degree $\Delta$ containing each of the vectors $e_1,\dotsc,e_r,-\sum e_i$ exactly $d$ times, and all the $D_{ij}$ equal to classes of lines, we obtain the tropical descendant Gromov-Witten invariants of \cite{MR08} and \cite{Rau08} up to factor $(d!)^{r+1}$. In particular, slight variations of the recursion formulas of \cite{MR08,Rau08,Gauss15} also hold for the corresponding algebraic invariants.
\end{rem}

\begin{rem}
We defined the $\psi$-classes $\hat\psi_k$ on $\Mnullnbarlab(Y,\Delta)$ to be the analogues of the tropical $\psi$-classes on $\Mnullnlabtrop(\R^r,\Delta)$ of \cite{MR08}, that is as pullbacks of $\psi$-classes on $\Mnullnbar[n+m]$. I would be interested to know how they are related to the classes on $\Mnullnbarlab(Y,\Delta)$ obtained by taking Chern classes of cotangent line bundles.
\end{rem}

\bibliography{}

\normalsize
Andreas Gross, Fachbereich Mathematik, Technische Universität Kaiserslautern, Postfach 3049, 67653 Kaiserslautern, Germany, \href	{mailto:agross@mathematik.uni-kl.de}{\ttfamily agross@mathematik.uni-kl.de}

\end{document}